\definecolor{darkgreen}{rgb}{0.33, 0.55, 0.13}
\definecolor{darkpink}{rgb}{0.91, 0.33, 0.5}
\definecolor{electriccyan}{rgb}{0.0, 1.0, 1.0}
\definecolor{electricultramarine}{rgb}{0.25, 0.0, 1.0}
\definecolor{greenyellow}{rgb}{0.30, 0.82, 0.0}
\definecolor{lightbrown}{rgb}{0.71, 0.4, 0.11}
\definecolor{prune}{rgb}{0.45, 0.11, 0.11}
\definecolor{mygold}{RGB}{212, 175, 55}
\definecolor{levired}{RGB}{220, 50, 50}   % Red for offset A (Base)
\definecolor{leviblue}{RGB}{50, 100, 220} % Blue for offset B (Next)
\definecolor{levigold}{RGB}{218, 165, 32} % Gold for "Wormholes" (Jump)
\newtheorem{theorem}{Theorem}
\newtheorem{proposition}{Proposition}
\newtheorem{definition}{Definition}
\begin{document}

% --- Title Information ---
\title[The Harmonic Multiverse of 12TET]{In Search of the Canonical Harmony for 12-TET}

\author{Paweł Nurowski}
\address{Centrum Fizyki Teoretycznej,
Polska Akademia Nauk, Al.
Lotnik\'ow 32/46, 02-668 Warszawa, Poland, and
Guangdong Technion -- Israel Institute of Technology, No. 241, Daxue Road,
Jinping District, Shantou, Guangdong Province, China}
\email{nurowski@cft.edu.pl}

\date{\today}

\begin{abstract}
Is the specific structure of Western tonal harmony a physical inevitability derived from acoustics, or is it merely one solution among many in a purely algebraic landscape? In this paper, we strip away the physics of vibrating strings and treat harmony as the solution to a simple linear system within the cyclic group $\mathbb{Z}_{12}$. By defining a harmonic system as a partitioning of a generator interval (the "Fifth") into two complementary thirds, we derive a complete classification of all possible harmonic universes in 12-Tone Equal Temperament. We show that every such system corresponds to a specific topological structure, visualized via its Levi graph.

Our analysis reveals a counter-intuitive fact: the topological structure of standard Western harmony is not unique. It is one of exactly twelve mathematically isomorphic systems. However, we demonstrate that these shadows are not created equal. We identify a "privileged quartet" of systems that preserve the global connectivity of the Circle of Fifths. Furthermore, by invoking the Chinese Remainder Theorem, we present a compelling argument that the most number-theoretically natural system in $\mathbb{Z}_{12}$ is not the Western one, but a specific "alien" system defined by the partition $(9,4)$. This classification provides a rigorous mathematical foundation for neo-Riemannian theory and offers composers a precise "translation map" to explore syntactically familiar, yet sonically distinct, harmonic worlds.
\end{abstract}
\maketitle

\section{Introduction}

For centuries, the study of musical harmony has been a dialogue between two distinct voices. The first voice is that of the physicist, grounded in acoustics [10, 14], arguing that the Major and Minor triads are inevitable consequences of the harmonic series. To this camp, a chord is "good" because nature says so; the frequency ratios $4:5:6$ dictate our aesthetic reality.

The second voice is that of the mathematician [8, 11, 13, 15, 16], who views harmony not as a physical phenomenon, but as a combinatorial game played on a set of pitch classes. In this view, notes are simply elements of a group, and chords are geometric shapes moving through a discrete space.

In this paper, we adopt the latter view. However, our starting point is not an abstract axiom, but a direct observation of the standard Western major triad. We observe that a major chord is built by splitting a Perfect Fifth ($q=7$) into a Major Third ($t=4$) and a Minor Third ($s=3$). These intervals satisfy two elementary arithmetic conditions:
\begin{enumerate}
    \item Their sum forms the generator: $4+3 = 7$.
    \item Their difference is the structural parameter: $4-3 = 1$.
\end{enumerate}

We generalize this observation by treating the specific intervals $4, 3,$ and $7$ as variables. We pose a simple question: What other harmonic systems exist if we maintain this algebraic form but allow the values to change? This leads us to study the following system of linear equations in $\mathbb{Z}_{12}$:
\begin{equation}
\label{eq:fundamental}
t+s \equiv q \pmod{12} \quad \text{and} \quad t-s \equiv \Delta \pmod{12}
\end{equation}
where $q$ is any generator of the group and $\Delta$ is a parameter defining the harmonic "color."

Since the standard Western harmony (where $q=7, \Delta=1$) is a valid solution, we know this system is consistent. Our goal is to perform a complete scan of the solution space to identify every other possible harmonic universe that satisfies these constraints.

However, as we traverse this algebraic landscape, a deeper hierarchy emerges. While topology suggests a democracy of "shadow systems" that mirror our own, number theory points toward a single, underlying monarch. Our journey proceeds as follows:
\begin{itemize}
    \item In \textbf{Section 2}, we establish the general algebraic theory, defining chords not by their acoustic qualities, but by their generation rules in $\mathbb{Z}_{n}$.
    \item In \textbf{Sections 3--9}, we explore the solutions for the standard generator $q=7$ and beyond. We encounter exotic systems, such as the "Wide Thirds" universe where chords are Janus-faced (modal degeneracy), and the "Tritone" universe connected by harmonic wormholes.
    \item In \textbf{Sections 10--11}, we bridge music and topology, identifying our harmonic systems with specific symmetric graph structures. We observe that three of these classes correspond to specific $12_3$ geometric configurations ($D_{222}$, $D_{226}$, and $D_{228}$).
    \item Finally, in \textbf{Section 12}, we present the complete Classification Theorem. We show that while the topological class of Western harmony contains twelve isomorphic "shadows," a strict hierarchy exists among them. We distinguish a "privileged quartet" of systems generated by coprime intervals and conclude with a number-theoretic outlook based on the Chinese Remainder Theorem. This leads to the surprising observation that the Western system is perhaps only an acoustic approximation of a more fundamental, "Canonical" harmony defined by the structure of 12 itself.
\end{itemize}

Most strikingly, we demonstrate that the geometric structure underpinning Bach and Beethoven is not unique to the interval set $\{3,4,7\}$. There exist eleven other systems mathematically identical but sonically alien waiting to be explored. This suggests that the ``grammar'' of Western music is a robust topological invariant, separable from the specific ``vocabulary'' of the intervals we have used for the last 500 years.

To prove that these alternative harmonic universes are not merely abstract theoretical constructs, the concluding section of this paper bridges the gap between our algebraic classification and direct auditory experience. We provide concrete acoustic realizations of these mathematical results by translating existing classical repertoire---specifically Händel's \textit{Passacaglia in G minor} and Schubert's \textit{Piano Trio Op.~100}---from the standard $(4,3)$ system into the alien $(9,4)$ harmonic universe. By systematically applying the affine transformation $x \mapsto 5x + 1 \pmod{12}$ to the MIDI data of these canonical works and rendering the results with the physical modeling synthesizer Pianoteq 9 PRO, we invite the reader to step outside the familiar acoustic constraints and actually hear the geometric structure of this mathematically privileged harmony.

\section{General Theory of $n$-TET Harmony}

Our fundamental postulate is that a harmonic system in any cyclic group $\mathbb{Z}_n$ is generated by three intervals: a ``Fifth'' ($q$), a ``Major Third'' ($t$), and a ``Minor Third'' ($s$).

\subsection{The Interval $q$ (The Generalized Fifth)} The interval $q$ plays the structural role of the "Fifth" in our triad. In classical Pythagorean tuning, one typically requires that stacking the interval $q$ repeatedly must eventually visit every note in the chromatic scale before returning to the origin. This condition of maximal connectivity requires:
\begin{equation}
\label{eq:gcd}
\gcd(q, n) = 1.
\end{equation}
For $n = 12$, the values for $q$ that satisfy this condition are $\{1, 5, 7, 11\}$. The choice $q = 7$ corresponds to the classical perfect fifth (700 cents) and serves as the primary focus for the initial sections of this study (Sections 3--7).

However, our Diophantine formulation does not strictly enforce this condition. We treat $q$ as a free parameter ranging over the entire set $\{1, \dots, 11\}$. While choosing a value coprime to $n$ ensures a single connected "circle of fifths," choosing a value where $\gcd(q, n) > 1$ (such as $q=6$ or $q=9$) is mathematically valid. As we will demonstrate in the later sections on exotic systems, such choices lead to "reducible" or disconnected harmonic universes, which nonetheless possess their own internal consistency.

%% The interval $q$ must function as a generator of the group $\mathbb{Z}_n$. In the language of Pythagorean tuning, stacking the interval $q$ repeatedly must eventually visit every note in the chromatic scale before returning to the origin. This requires:
%% \begin{equation}
%%     \gcd(q, n) = 1.
%% \end{equation}
%% For $n=12$, the possible values for $q$ that satisfy this condition are $\{1, 5, 7, 11\}$. The choice $q=7$ corresponds to the classical perfect fifth (700 cents) and is the generator we select for this study.

\subsection{The Harmonic Mediants (The Thirds)}
Once the generator $q$ is fixed, we define the harmonic mediants $t$ (Major Third) and $s$ (Minor Third). We impose the ``Pythagorean Consonance Condition,'' which requires that the stacking of a Major Third and a Minor Third yields the Fifth:
\begin{equation}
    t + s \equiv q \pmod n.
\end{equation}
To classify the possible solutions to this equation, we introduce the structural parameter $\Delta$, defined as the difference between the thirds:
\begin{equation}
    t - s \equiv \Delta \pmod n.
\end{equation}
This formulation allows us to treat $\Delta$ as the primary independent variable. Rather than solving these equations anew for every case, we note that the problem is equivalent to finding integer partitions of the generator $q$. Since musical intervals are magnitudes, we consider $t, s \in \{0, \dots, n-1\}$ and impose the strict integer condition $t+s=q$ (avoiding modular wraparound within the triad). Furthermore, to remove the symmetry redundancy where $t$ and $s$ are simply swapped (renaming Major as Minor), we impose the ordering $t \ge s$.
For $n=12$ and $q=7$, we will simply scan through the integer partitions of 7, which corresponds to scanning the possible values of $\Delta$.

\subsection{General Definition of Chords}
With the intervals $q, t, s$ determined for a given $\Delta$, we define the two fundamental species of triads for any root $k \in \mathbb{Z}_{n}$.

\textbf{The Major Triad ($M_k$)} is constructed by stacking the Major Third $t$ on the root $k$, which implicitly generates the Fifth $q$ (since $t+s=q$).
\begin{equation}
    M_k = \{k, k+t, k+q\}\label{maj}
\end{equation}
\textbf{The Minor Triad ($m_k$)} is constructed by stacking the Minor Third $s$ on the root $k$, which implicitly generates the Fifth $q$.
\begin{equation}
    m_k = \{k, k+s, k+q\}\label{min}
\end{equation}
These definitions ensure that every root supports a Major and a Minor triad, both bounded by the same perfect fifth $q$, but divided internally by the distinct mediants determined by $\Delta$.

\subsection{The Geometric Space: Tonnetz and Transformations}
The collection of all such Major and Minor triads forms the vertices of a harmonic graph. Historically, this structure traces back to Euler \cite{Euler1739}, who first represented harmonic relationships on a lattice. It was later formalized by Naumann \cite{Naumann1858}, Oettingen \cite{Oettingen1866}, and Riemann \cite{Riemann1893,Riemann1914} into the \emph{Tonnetz} (tone-network), which we will hereafter refer to as the \textbf{Euler--Riemann Tonnetz}. In our generalized theory, the connectivity of the \textbf{Tonnetz}---our analog of the Euler--Riemann Tonnetz---is defined by three specific voice-leading transformations that map a triad to a neighbor sharing a common dyad (two notes).

Adapting the formalism of \cite{Riemann1893}, we define these transformations in full generality for any $t$ and $s$:
\begin{itemize}
    \item \textbf{P (Parallel):} Exchanges a Major triad with the Minor triad on the same root. They share the ``Fifth'' interval $\{0, q\}$.
    $$ P(M_k) = m_k \quad \text{and} \quad P(m_k) = M_k. $$
    \item \textbf{L (Leading-Tone Exchange):} Exchanges a triad with one that shares the ``Minor Third'' interval $s$. This maps a Major chord to a Minor chord rooted $t$ steps higher.
    $$ L(M_k) = m_{k+t} \quad \text{and} \quad L(m_k) = M_{k-t}. $$
    \item \textbf{R (Relative):} Exchanges a triad with one that shares the ``Major Third'' interval $t$. This maps a Major chord to a Minor chord rooted $s$ steps lower.
    $$ R(M_k) = m_{k-s} \quad \text{and} \quad R(m_k) = M_{k+s}. $$
\end{itemize}
By systematically applying these transformations to the chords generated by each $\Delta$, we can construct the resulting Tonnetz and analyze its topological properties.

\section{Classification of Harmonies in 12-TET}

We now proceed to analyze the three distinct cases derived from the partitions of $q=7$.

\subsection{Overview of Cases}
\begin{itemize}
\item \textbf{Case $\Delta=1$:} There are two solutions here:
  \begin{itemize}
\item The partition $(t,s) = (4,3)$. This corresponds to the standard Major and Minor Thirds. We designate this the \textbf{``Standard}, or \textbf{Narrow}, \textbf{Thirds'' System}.
\item The partition $(t,s)=(10,9)$.  We call this the \textbf{``Far Narrow Thirds'' System}.
  \end{itemize}
\item \textbf{Case $\Delta=3$:} Here as well we have two solutions:
\begin{itemize}
\item
  The partition $(t,s) = (5,2)$. Here, the ``Major'' interval is a Fourth, and the ``Minor'' is a Major Second. We designate this the \textbf{``Wide Thirds'' System}.
\item The partition $(t,s)=(11,8)$. We call this the \textbf{``Far Wide Thirds'' System}.
  \end{itemize}
\item \textbf{Case $\Delta=5$:} The partition $(t,s) = (6,1)$. Here, the ``Major'' interval is a Tritone, and the ``Minor'' is a Semitone. We designate this the \textbf{``Tritone'' System}.
\end{itemize}

\section{The Case $\Delta=1$, $(t,s)=(4,3)$: The Standard Harmony}

We begin with the smallest odd parameter $\Delta=1$. Solving the structural equations yields the solution $(t, s) = (4, 3)$.
In this ``Standard Thirds'' System, as in the standard Western harmony, the ``Major Third'' is an interval of 4 semitones, and the ``Minor Third'' is an interval of 3 semitones.
\subsection{Major and Minor chords} 
Using our generative definitions \eqref{maj}-\eqref{min}, we construct the two families of chords. We list the full set of generated triples in Table \ref{tab:delta1}.
 \begin{table}[H]
   \centering
   \scriptsize 
    \renewcommand{\arraystretch}{0.9}
    \begin{tabular}{|c|c|c|c|c|}
        \hline
        \textbf{Root ($k$)} & \textbf{Major Chord $M_k$} & \textbf{Standard Name} & \textbf{Minor Chord $m_k$} & \textbf{Standard Name} \\
        \hline
        0 & $\{0, 4, 7\}$ & C Major & $\{0, 3, 7\}$ & c minor \\
        1 & $\{1, 5, 8\}$ & C$^\sharp$ Major & $\{1, 4, 8\}$ & c$^\sharp$ minor \\
        2 & $\{2, 6, 9\}$ & D Major & $\{2, 5, 9\}$ & d minor \\
        3 & $\{3, 7, 10\}$ & E$^\flat$ Major & $\{3, 6, 10\}$ & e$^\flat$ minor \\
        4 & $\{4, 8, 11\}$ & E Major & $\{4, 7, 11\}$ & e minor \\
        5 & $\{5, 9, 0\}$ & F Major & $\{5, 8, 0\}$ & f minor \\
        6 & $\{6, 10, 1\}$ & F$^\sharp$ Major & $\{6, 9, 1\}$ & f$^\sharp$ minor \\
        7 & $\{7, 11, 2\}$ & G Major & $\{7, 10, 2\}$ & g minor \\
        8 & $\{8, 0, 3\}$ & A$^\flat$ Major & $\{8, 11, 3\}$ & g$^\sharp$ minor \\
        9 & $\{9, 1, 4\}$ & A Major & $\{9, 0, 4\}$ & a minor \\
        10 & $\{10, 2, 5\}$ & B$^\flat$ Major & $\{10, 1, 5\}$ & b$^\flat$ minor \\
        11 & $\{11, 3, 6\}$ & B Major & $\{11, 2, 6\}$ & b minor \\
        \hline
    \end{tabular}
    \caption{\scriptsize The complete set of harmonic triads generated by the solution $(t,s)=(4,3)$ in 12-TET. We observe an immediate identity: the generated sets $M_k$ and $m_k$ correspond exactly to the standard Major and Minor triads of Western tonal music.}
    \label{tab:delta1}
\end{table}

The explicit enumeration confirms that the choice $\Delta=1$ is not merely one of many theoretical possibilities; it rigorously reproduces the foundational vocabulary of the common practice period. The intervals 4 and 3 correspond to the best rational approximations of the frequency ratios $5:4$ and $6:5$ available in 12-TET.
\subsection{The Euler -- Riemann Tonnetz}
We now apply the {\bf P}-{\bf L}-{\bf R} transformations to obtain system's $(4,3)$ harmonic network, i.e. its Tonnetz. The lists of three minor chords connected to each Major chord, and of three Major chords connected to each minor chord are given below.

\begin{table}[H]
    \centering
    \scriptsize
    \renewcommand{\arraystretch}{0.7}
    \caption{\scriptsize Transformations of Major Chords ($M_k$)}
    \label{tab:major_neighbors}
    \begin{tabular}{|c|c||c|c|c|}
        \hline
        \textbf{Chord} & \textbf{Name} & \textbf{P-Neighbor} ($m_k$) & \textbf{L-Neighbor} ($m_{k+4}$) & \textbf{R-Neighbor} ($m_{k+9}$) \\
        \hline
        $M_0$ & C Major & $m_0$ (c min) & $m_4$ (e min) & $m_9$ (a min) \\
        $M_1$ & C$^\sharp$ Major & $m_1$ (c$^\sharp$ min) & $m_5$ (f min) & $m_{10}$ (b$^\flat$ min) \\
        $M_2$ & D Major & $m_2$ (d min) & $m_6$ (f$^\sharp$ min) & $m_{11}$ (b min) \\
        $M_3$ & E$^\flat$ Major & $m_3$ (e$^\flat$ min) & $m_7$ (g min) & $m_0$ (c min) \\
        $M_4$ & E Major & $m_4$ (e min) & $m_8$ (g$^\sharp$ min) & $m_1$ (c$^\sharp$ min) \\
        $M_5$ & F Major & $m_5$ (f min) & $m_9$ (a min) & $m_2$ (d min) \\
        $M_6$ & F$^\sharp$ Major & $m_6$ (f$^\sharp$ min) & $m_{10}$ (b$^\flat$ min) & $m_3$ (e$^\flat$ min) \\
        $M_7$ & G Major & $m_7$ (g min) & $m_{11}$ (b min) & $m_4$ (e min) \\
        $M_8$ & A$^\flat$ Major & $m_8$ (g$^\sharp$ min) & $m_0$ (c min) & $m_5$ (f min) \\
        $M_9$ & A Major & $m_9$ (a min) & $m_1$ (c$^\sharp$ min) & $m_6$ (f$^\sharp$ min) \\
        $M_{10}$ & B$^\flat$ Major & $m_{10}$ (b$^\flat$ min) & $m_2$ (d min) & $m_7$ (g min) \\
        $M_{11}$ & B Major & $m_{11}$ (b min) & $m_3$ (e$^\flat$ min) & $m_8$ (g$^\sharp$ min) \\
        \hline
    \end{tabular}
\end{table}

\begin{table}[h!]
    \centering
    \scriptsize
    \renewcommand{\arraystretch}{0.7}
    \caption{\scriptsize Transformations of Minor Chords ($m_k$)}
    \label{tab:minor_neighbors}
    \begin{tabular}{|c|c||c|c|c|}
        \hline
        \textbf{Chord} & \textbf{Name} & \textbf{P-Neighbor} ($M_k$) & \textbf{L-Neighbor} ($M_{k+8}$) & \textbf{R-Neighbor} ($M_{k+3}$) \\
        \hline
        $m_0$ & c minor & $M_0$ (C Maj) & $M_8$ (A$^\flat$ Maj) & $M_3$ (E$^\flat$ Maj) \\
        $m_1$ & c$^\sharp$ minor & $M_1$ (C$^\sharp$ Maj) & $M_9$ (A Maj) & $M_4$ (E Maj) \\
        $m_2$ & d minor & $M_2$ (D Maj) & $M_{10}$ (B$^\flat$ Maj) & $M_5$ (F Maj) \\
        $m_3$ & e$^\flat$ minor & $M_3$ (E$^\flat$ Maj) & $M_{11}$ (B Maj) & $M_6$ (F$^\sharp$ Maj) \\
        $m_4$ & e minor & $M_4$ (E Maj) & $M_0$ (C Maj) & $M_7$ (G Maj) \\
        $m_5$ & f minor & $M_5$ (F Maj) & $M_1$ (C$^\sharp$ Maj) & $M_8$ (A$^\flat$ Maj) \\
        $m_6$ & f$^\sharp$ minor & $M_6$ (F$^\sharp$ Maj) & $M_2$ (D Maj) & $M_9$ (A Maj) \\
        $m_7$ & g minor & $M_7$ (G Maj) & $M_3$ (E$^\flat$ Maj) & $M_{10}$ (B$^\flat$ Maj) \\
        $m_8$ & g$^\sharp$ minor & $M_8$ (A$^\flat$ Maj) & $M_4$ (E Maj) & $M_{11}$ (B Maj) \\
        $m_9$ & a minor & $M_9$ (A Maj) & $M_5$ (F Maj) & $M_0$ (C Maj) \\
        $m_{10}$ & b$^\flat$ minor & $M_{10}$ (B$^\flat$ Maj) & $M_6$ (F$^\sharp$ Maj) & $M_1$ (C$^\sharp$ Maj) \\
        $m_{11}$ & b minor & $M_{11}$ (B Maj) & $M_7$ (G Maj) & $M_2$ (D Maj) \\
        \hline
    \end{tabular}
\end{table}

\subsection{The Levi Graph Visualization}\label{sec43}
We visualize the harmonic space as the incidence graph of these triads (the Levi Graph). Figure \ref{fig:levi12_combined} arranges the 24 chords in two distinct layouts to highlight the topological symmetry.

\begin{figure}[H]
    \centering
    % --- LEFT FIGURE: Chromatic/Simple Order ---
    \begin{minipage}{0.48\textwidth}
        \centering
        \begin{tikzpicture}[scale=0.36, every node/.style={transform shape}]
            \def\R{5.5}
            \def\L{6.8}

            % Vertices (Simple Order M0, m0, M1, m1...)
            \foreach \k in {0,...,11} {
                \pgfmathsetmacro{\angleM}{90 - \k*30}
                \pgfmathsetmacro{\anglem}{90 - \k*30 - 15}
                \coordinate (M\k) at (\angleM:\R);
                \coordinate (m\k) at (\anglem:\R);
                \node[circle, fill=blue!30, draw=blue, minimum size=0.8cm, inner sep=0pt] at (M\k) {};
                \node[circle, fill=red!30, draw=red, minimum size=0.8cm, inner sep=0pt] at (m\k) {};
            }

            % Edges
            \foreach \k in {0,...,11} {
                \draw[thick, black] (M\k) -- (m\k); % P
                \pgfmathsetmacro{\nextL}{int(mod(\k+4,12))}
                \draw[gray, thick] (M\k) -- (m\nextL); % L
                \pgfmathsetmacro{\nextR}{int(mod(\k+9,12))}
                \draw[gray!70, thick, dashed] (M\k) -- (m\nextR); % R
            }

            % Labels (Simple)
            \def\names{{"C","C$^\sharp$","D","E$^\flat$","E","F","F$^\sharp$","G","A$^\flat$","A","B$^\flat$","B"}}
            \def\minnames{{"cm","c$^\sharp$m","dm","e$^\flat$m","em","fm","f$^\sharp$m","gm","g$^\sharp$m","am","b$^\flat$m","bm"}}
            \foreach \k in {0,...,11} {
                \pgfmathsetmacro{\angleM}{90 - \k*30}
                \pgfmathsetmacro{\anglem}{90 - \k*30 - 15}
                \pgfmathparse{\names[\k]} \let\currname\pgfmathresult
                \node[blue!80!black] at (\angleM:\L) {\textbf{\currname}};
                \pgfmathparse{\minnames[\k]} \let\currname\pgfmathresult
                \node[red!80!black] at (\anglem:\L) {\textbf{\currname}};
                % Re-draw nodes with text
                \coordinate (M\k) at (\angleM:\R);
                \coordinate (m\k) at (\anglem:\R);
                \node[circle, fill=blue!30, draw=blue, minimum size=0.8cm, inner sep=0pt] at (M\k) {$M_{\k}$};
                \node[circle, fill=red!30, draw=red, minimum size=0.8cm, inner sep=0pt] at (m\k) {$m_{\k}$};
            }
        \end{tikzpicture}
    \end{minipage}
    \hfill
    % --- RIGHT FIGURE: RL-Cycle Order ---
    \begin{minipage}{0.48\textwidth}
        \centering
        \begin{tikzpicture}[scale=0.36, every node/.style={transform shape}]
            \def\R{5.5}
            \def\L{6.8}

            \def\names{{"C","C$^\sharp$","D","E$^\flat$","E","F","F$^\sharp$","G","A$^\flat$","A","B$^\flat$","B"}}
            \def\minnames{{"cm","c$^\sharp$m","dm","e$^\flat$m","em","fm","f$^\sharp$m","gm","g$^\sharp$m","am","b$^\flat$m","bm"}}

            % Generate nodes in RL order
            \foreach \s in {0,...,11} {
                \pgfmathsetmacro{\k}{int(mod(5*\s, 12))}      % Major Index
                \pgfmathsetmacro{\j}{int(mod(9 + 5*\s, 12))}  % Minor Index
                \pgfmathsetmacro{\angleM}{90 - (\s*2)*15}
                \pgfmathsetmacro{\anglem}{90 - (\s*2 + 1)*15}

                \coordinate (M\k) at (\angleM:\R);
                \coordinate (m\j) at (\anglem:\R);

                % Draw basic nodes first
                \node[circle, fill=blue!30, draw=blue, minimum size=0.8cm, inner sep=0pt] at (M\k) {};
                \node[circle, fill=red!30, draw=red, minimum size=0.8cm, inner sep=0pt] at (m\j) {};

                % Labels
                \pgfmathparse{\names[\k]} \let\currname\pgfmathresult
                \node[blue!80!black] at (\angleM:\L) {\textbf{\currname}};
                \pgfmathparse{\minnames[\j]} \let\currname\pgfmathresult
                \node[red!80!black] at (\anglem:\L) {\textbf{\currname}};
            }

            % Edges
            \foreach \k in {0,...,11} {
                \draw[thick, black] (M\k) -- (m\k); % P
                \pgfmathsetmacro{\nextL}{int(mod(\k+4,12))}
                \draw[gray!60, thick] (M\k) -- (m\nextL); % L
                \pgfmathsetmacro{\nextR}{int(mod(\k+9,12))}
                \draw[gray!60, thick, dashed] (M\k) -- (m\nextR); % R
            }

            % Re-draw nodes with text
            \foreach \s in {0,...,11} {
                \pgfmathsetmacro{\k}{int(mod(5*\s, 12))}
                \pgfmathsetmacro{\j}{int(mod(9 + 5*\s, 12))}
                \pgfmathsetmacro{\angleM}{90 - (\s*2)*15}
                \pgfmathsetmacro{\anglem}{90 - (\s*2 + 1)*15}
                \node[circle, fill=blue!30, draw=blue, minimum size=0.8cm, inner sep=0pt] at (\angleM:\R) {$M_{\k}$};
                \node[circle, fill=red!30, draw=red, minimum size=0.8cm, inner sep=0pt] at (\anglem:\R) {$m_{\j}$};
            }
        \end{tikzpicture}
    \end{minipage}

    \caption{\scriptsize Two topological views of the $\Delta=1$ Harmonic System (The Euler-Riemann Tonnetz).
    \textbf{Left:} The ``Chromatic'' layout. Chords are arranged by semitonal root progression ($M_0, m_0, M_1, m_1 \dots$). Here, the Parallel transformation \textbf{P} (black lines) connects immediate neighbors.
    \textbf{Right:} The \textbf{``Cycle of Fourths''} layout \cite{lane}. Chords are arranged by alternating Relative and Leading-tone steps ($M_0, m_9, M_5, m_2 \dots$).
    The dashed lines specifically denote the \textbf{Relative (R)} transformation, distinguished from the \textbf{Leading-tone (L)} transformation (gray solid). In the Right figure, this distinction highlights that the perimeter of the graph consists entirely of alternating L and R steps (the ``hexatonic cycle''), while P transformations form the internal cross-connections.}
    \label{fig:levi12_combined}
\end{figure}
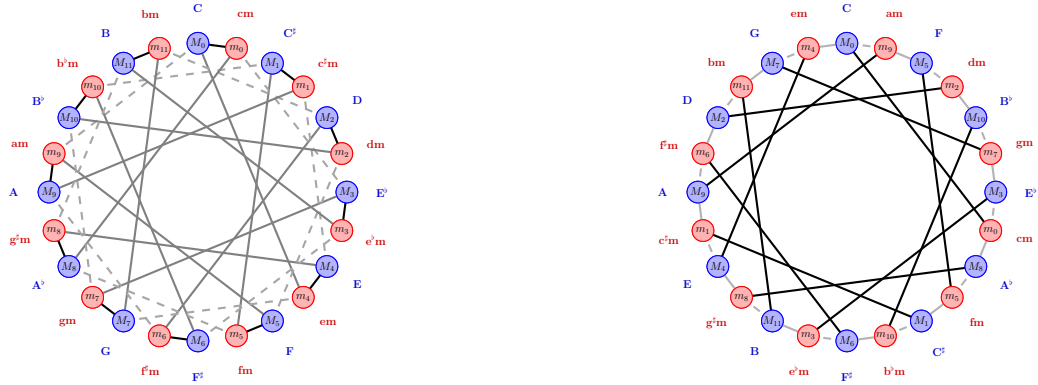
\subsection{Properties of the Levi Graph ($12_3$ Configuration)}
Since we have identified the solution $(4,3)$ as the standard harmonic system, its associated Levi graph is precisely the \textbf{Euler--Riemann Tonnetz}. This graph corresponds to the $12_3$ configuration discussed in \cite{lane}.
As established in \cite{lane}, this graph possesses several remarkable graph-theoretic properties:
\begin{enumerate}
    \item \textbf{Girth 6:} The shortest cycles in the graph have length 6.
    \item \textbf{Bipartite:} The graph is bipartite (Major vs. Minor chords).
    \item \textbf{Hamiltonicity:} The graph admits a Hamiltonian cycle visiting every chord exactly once.
\end{enumerate}

We illustrate these properties in Figure \ref{fig:levi12_cycles} using the \textbf{``Cycle of Fourths''} layout (Figure \ref{fig:levi12_combined}b), which is particularly well-suited for displaying these symmetries.

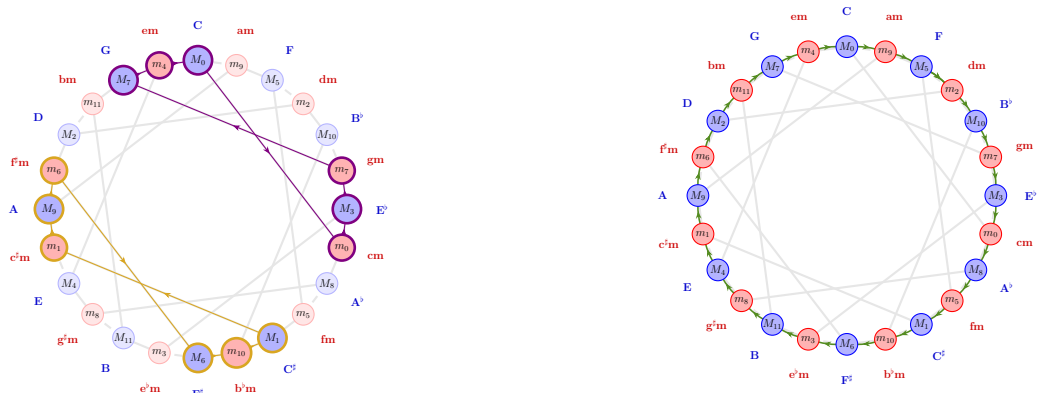
\begin{figure}[H]
    \centering
    \definecolor{mygold}{rgb}{0.85, 0.65, 0.13}
    \definecolor{violet}{rgb}{0.5, 0.0, 0.5}
    % Style for centered arrows (Standard)
    \tikzset{midarrow/.style={decoration={markings, mark=at position 0.5 with {\arrow{Stealth[scale=0.6]}}}, postaction={decorate}}}
    % Style for centered arrows (Rotated 4 degrees clockwise for Green Cycle)
    % FIX: passed 'rotate' to the \arrow command options, not the tip options
    \tikzset{midarrowrot/.style={decoration={markings, mark=at position 0.6 with {\arrow{Stealth[scale=0.5]}}}, postaction={decorate}}}

    % --- FIGURE A: MINIMAL CYCLES ---
    \begin{minipage}{0.48\textwidth}
        \centering
        \begin{tikzpicture}[scale=0.36, every node/.style={transform shape}]
            \def\R{5.5}
            \def\L{6.8}

            % Data arrays for labels
            \def\names{{"C","C$^\sharp$","D","E$^\flat$","E","F","F$^\sharp$","G","A$^\flat$","A","B$^\flat$","B"}}
            \def\minnames{{"cm","c$^\sharp$m","dm","e$^\flat$m","em","fm","f$^\sharp$m","gm","g$^\sharp$m","am","b$^\flat$m","bm"}}

            % --- 1. Background Graph (Dimmed) & Labels ---
            \foreach \s in {0,...,11} {
                \pgfmathsetmacro{\k}{int(mod(5*\s, 12))}
                \pgfmathsetmacro{\j}{int(mod(9 + 5*\s, 12))}
                \pgfmathsetmacro{\angleM}{90 - (\s*2)*15}
                \pgfmathsetmacro{\anglem}{90 - (\s*2 + 1)*15}
                \coordinate (M\k) at (\angleM:\R);
                \coordinate (m\j) at (\anglem:\R);

                % Labels
                \pgfmathparse{\names[\k]} \let\currname\pgfmathresult
                \node[blue!80!black] at (\angleM:\L) {\textbf{\currname}};
                \pgfmathparse{\minnames[\j]} \let\currname\pgfmathresult
                \node[red!80!black] at (\anglem:\L) {\textbf{\currname}};
            }
            \foreach \k in {0,...,11} {
                \pgfmathsetmacro{\nextL}{int(mod(\k+4,12))}
                \pgfmathsetmacro{\nextR}{int(mod(\k+9,12))}
                \draw[black!10, thick] (M\k) -- (m\k); % P
                \draw[black!10, thick] (M\k) -- (m\nextL); % L
                \draw[black!10, thick, dashed] (M\k) -- (m\nextR); % R
            }

            % --- 2. Minimal Cycles (Highlighted) ---

            % Cycle 1 (Gold): Note 1 (C#) Cycle (P-L-R-P-L-R)
            \draw[mygold,  thin, midarrow] (M1) -- (m1);
            \draw[mygold, thin, midarrow] (m1) -- (M9);
            \draw[mygold, thin, midarrow] (M9) -- (m6);
            \draw[mygold, thin, midarrow] (m6) -- (M6);
            \draw[mygold, thin, midarrow] (M6) -- (m10);
            \draw[mygold, thin, midarrow] (m10) -- (M1);

            % Cycle 2 (Violet): Note 7 (G) Cycle (P-R-L-P-R-L)
            \draw[violet, thin, midarrow] (M0) -- (m0);
            \draw[violet, thin, midarrow] (m0) -- (M3);
            \draw[violet, thin, midarrow] (M3) -- (m7);
            \draw[violet, thin, midarrow] (m7) -- (M7);
            \draw[violet, thin, midarrow] (M7) -- (m4);
            \draw[violet, thin, midarrow] (m4) -- (M0);

            % --- 3. Nodes (Redrawn on top) ---
            \foreach \s in {0,...,11} {
                \pgfmathsetmacro{\k}{int(mod(5*\s, 12))}
                \pgfmathsetmacro{\j}{int(mod(9 + 5*\s, 12))}
                \pgfmathsetmacro{\angleM}{90 - (\s*2)*15}
                \pgfmathsetmacro{\anglem}{90 - (\s*2 + 1)*15}
                \node[circle, fill=blue!10, draw=blue!30, minimum size=0.8cm, inner sep=0pt] at (\angleM:\R) {$M_{\k}$};
                \node[circle, fill=red!10, draw=red!30, minimum size=0.8cm, inner sep=0pt] at (\anglem:\R) {$m_{\j}$};
            }

            % Highlight Gold Nodes (1, 6, 9, 10)
            \foreach \n in {1,6,9} \node[circle, fill=blue!30, draw=mygold, line width=1pt, minimum size=0.8cm] at (M\n) {$M_{\n}$};
            \foreach \n in {1,6,10} \node[circle, fill=red!30, draw=mygold, line width=1pt, minimum size=0.8cm] at (m\n) {$m_{\n}$};

            % Highlight Violet Nodes (0, 3, 4, 7)
            \foreach \n in {0,3,7} \node[circle, fill=blue!30, draw=violet, line width=1pt, minimum size=0.8cm] at (M\n) {$M_{\n}$};
            \foreach \n in {0,4,7} \node[circle, fill=red!30, draw=violet, line width=1pt, minimum size=0.8cm] at (m\n) {$m_{\n}$};

        \end{tikzpicture}
        \subcaption{\scriptsize Minimal cycles ($g=6$) of opposed chiralities.
        \textcolor{mygold}{\textbf{Gold}}: The cycle around Note C$^\sharp$ (1) ($P$-$L$-$R$ order).
        \textcolor{violet}{\textbf{Violet}}: The cycle around Note G (7) ($P$-$R$-$L$ order).}
    \end{minipage}
    \hfill
    % --- FIGURE B: HAMILTONIAN CYCLE ---
    \begin{minipage}{0.48\textwidth}
        \centering
        \begin{tikzpicture}[scale=0.36, every node/.style={transform shape}]
            \def\R{5.5}
            \def\L{6.8}

            \def\names{{"C","C$^\sharp$","D","E$^\flat$","E","F","F$^\sharp$","G","A$^\flat$","A","B$^\flat$","B"}}
            \def\minnames{{"cm","c$^\sharp$m","dm","e$^\flat$m","em","fm","f$^\sharp$m","gm","g$^\sharp$m","am","b$^\flat$m","bm"}}

            % --- 1. Background Graph (Dimmed) & Labels ---
            \foreach \s in {0,...,11} {
                \pgfmathsetmacro{\k}{int(mod(5*\s, 12))}
                \pgfmathsetmacro{\j}{int(mod(9 + 5*\s, 12))}
                \pgfmathsetmacro{\angleM}{90 - (\s*2)*15}
                \pgfmathsetmacro{\anglem}{90 - (\s*2 + 1)*15}
                \coordinate (M\k) at (\angleM:\R);
                \coordinate (m\j) at (\anglem:\R);

                 % Labels
                \pgfmathparse{\names[\k]} \let\currname\pgfmathresult
                \node[blue!80!black] at (\angleM:\L) {\textbf{\currname}};
                \pgfmathparse{\minnames[\j]} \let\currname\pgfmathresult
                \node[red!80!black] at (\anglem:\L) {\textbf{\currname}};
            }
            \foreach \k in {0,...,11} {
                \pgfmathsetmacro{\nextL}{int(mod(\k+4,12))}
                \pgfmathsetmacro{\nextR}{int(mod(\k+9,12))}
                \draw[black!10, thick] (M\k) -- (m\k);
                \draw[black!10, thick] (M\k) -- (m\nextL);
                \draw[black!10, thick, dashed] (M\k) -- (m\nextR);
            }

            % --- 2. Hamiltonian Cycle (Perimeter) ---
            % Using midarrowrot (rotated 4 degrees clockwise)
            \draw[darkgreen, line width=0.6pt, midarrowrot] (M0) to[bend left=10] (m9);
            \draw[darkgreen, line width=0.6pt, midarrowrot] (m9) to[bend left=10] (M5);
            \draw[darkgreen, line width=0.6pt, midarrowrot] (M5) to[bend left=10] (m2);
             \foreach \s in {1,...,10} {
                \pgfmathsetmacro{\k}{int(mod(5*\s, 12))}
                \pgfmathsetmacro{\j}{int(mod(9 + 5*\s, 12))}
                \pgfmathsetmacro{\knext}{int(mod(5*(\s+1), 12))}
                \pgfmathsetmacro{\jnext}{int(mod(9 + 5*(\s+1), 12))}

                \draw[darkgreen, line width=0.6pt, midarrowrot] (m\j) to[bend left=10] (M\knext);
                \draw[darkgreen, line width=0.6pt, midarrowrot] (M\k) to[bend left=10] (m\j);
            }
            % Missing connection added: M7 -> m4
            \draw[darkgreen, line width=0.6pt, midarrowrot] (M7) to[bend left=10] (m4);
            % Closing connection
            \draw[darkgreen, line width=0.6pt, midarrowrot] (m4) to[bend left=10] (M0);

            % --- 3. Nodes ---
            \foreach \s in {0,...,11} {
                \pgfmathsetmacro{\k}{int(mod(5*\s, 12))}
                \pgfmathsetmacro{\j}{int(mod(9 + 5*\s, 12))}
                \pgfmathsetmacro{\angleM}{90 - (\s*2)*15}
                \pgfmathsetmacro{\anglem}{90 - (\s*2 + 1)*15}
                \node[circle, fill=blue!30, draw=blue, minimum size=0.8cm, inner sep=0pt] at (\angleM:\R) {$M_{\k}$};
                \node[circle, fill=red!30, draw=red, minimum size=0.8cm, inner sep=0pt] at (\anglem:\R) {$m_{\j}$};
            }
        \end{tikzpicture}
        \subcaption{\scriptsize A Hamiltonian Cycle ($C_{24}$). In the RL-layout, the perimeter forms a continuous cycle visiting all 24 chords exactly once. This corresponds to the alternating $R-L$ hexatonic chain.}
    \end{minipage}

    \caption{\scriptsize Cycles in the \texorpdfstring{$\Delta=1$}{Delta=1} Levi Graph ($12_3$ configuration). Background edges are deemed in light gray to highlight the cycle structures.}
    \label{fig:levi12_cycles}
\end{figure}

\section{The Case $\Delta=3$, $(t,s)=(5,2):$ The ``Wide Thirds'' System}

We now turn to the intermediate case defined by the partition of the generator $q=7$ into $(t, s) = (5, 2)$. In this system, the ``Major Third'' is the Perfect Fourth (5 semitones), and the ``Minor Third'' is the Major Second (2 semitones). The structural parameter is $\Delta = 5 - 2 = 3$.

\subsection{Explicit Enumeration and the Discovery of Identity}
The Major chord $M_k$ is defined by the set $\{k, k+5, k+7\}$, and the Minor chord $m_k$ by the set $\{k, k+2, k+7\}$. To inspect the structure of this harmonic universe, we list the pitch sets explicitly in Figure \ref{fig:chords_delta3_explicit}.

\begin{figure}[H]
  \centering
\scriptsize
    \renewcommand{\arraystretch}{1.1}
\begin{minipage}{0.45\textwidth}
\centering
\textbf{Major Chords ($M_k$)}\\
\begin{tabular}{ccc}
\toprule
Name & Root & \textbf{Set} \\
\midrule
$M_0$ & 0 & \textbf{\{0, 5, 7\}} \\
$M_1$ & 1 & \{1, 6, 8\} \\
$M_2$ & 2 & \{2, 7, 9\} \\
$M_3$ & 3 & \{3, 8, 10\} \\
$M_4$ & 4 & \{4, 9, 11\} \\
$M_5$ & 5 & \{5, 10, 0\} \\
$M_6$ & 6 & \{6, 11, 1\} \\
$M_7$ & 7 & \{7, 0, 2\} \\
$M_8$ & 8 & \{8, 1, 3\} \\
$M_9$ & 9 & \{9, 2, 4\} \\
$M_{10}$ & 10 & \{10, 3, 5\} \\
$M_{11}$ & 11 & \{11, 4, 6\} \\
\bottomrule
\end{tabular}
\end{minipage}
\hfill
\begin{minipage}{0.45\textwidth}
\centering
\textbf{Minor Chords ($m_k$)}\\
\begin{tabular}{ccc}
\toprule
Name & Root & \textbf{Set} \\
\midrule
$m_0$ & 0 & \{0, 2, 7\} \\
$m_1$ & 1 & \{1, 3, 8\} \\
$m_2$ & 2 & \{2, 4, 9\} \\
$m_3$ & 3 & \{3, 5, 10\} \\
$m_4$ & 4 & \{4, 6, 11\} \\
$m_5$ & 5 & \textbf{\{5, 7, 0\}} \\
$m_6$ & 6 & \{6, 8, 1\} \\
$m_7$ & 7 & \{7, 9, 2\} \\
$m_8$ & 8 & \{8, 10, 3\} \\
$m_9$ & 9 & \{9, 11, 4\} \\
$m_{10}$ & 10 & \{10, 0, 5\} \\
$m_{11}$ & 11 & \{11, 1, 6\} \\
\bottomrule
\end{tabular}
\end{minipage}
\caption{\scriptsize Explicit pitch sets of the ``Wide Thirds'' system ($\Delta=3$). Note the highlighted entries: $M_0$ and $m_5$ are identical sets.}
\label{fig:chords_delta3_explicit}
\end{figure}

\subsubsection{Theorem of Modal Degeneracy}
Inspection of the tables above reveals a startling fact. Compare, for example, the major chord rooted at 0 with the minor chord rooted at 5:
\begin{itemize}
    \item $M_0 = \{0, 5, 7\}$
    \item $m_5 = \{5, 7, 0\}$
\end{itemize}
These are the same set. This is not an isolated anomaly but a general property of the system.

\begin{theorem}[Modal Identity in $\Delta=3$]
In the 12-TET system with $(t=5, s=2)$, the set of pitch classes forming a major chord $M_k$ is identical to the set forming the minor chord $m_{k+5}$.
\[ M_k \equiv m_{k+5} \pmod{12}. \]
\end{theorem}

\begin{proof}
$M_k = \{k, k+5, k+7\}$.
$m_{k+5} = \{(k+5), (k+5)+2, (k+5)+7\} = \{k+5, k+7, k+12\} \equiv \{k+5, k+7, k\}$.
The sets are identical.
\end{proof}

\subsection{Musical Interpretation of Modal Degeneracy}

This mathematical identity has profound musical consequences for the ``Wide Thirds'' harmonic system.

\begin{itemize}
    \item \textbf{Structural Collapse:} In the standard $\Delta=1$ system, a Major triad (e.g., C-E-G) and its relative Minor (A-C-E) are distinct objects that share two tones. In the $\Delta=3$ system, the Major and relative Minor chords are the \emph{same physical object} sharing all three tones. Consequently, although we functionally label 24 chords ($12 \times M_k$ and $12 \times m_k$), there are only 12 unique physical triads in the entire universe. For instance, the pitch set $\{C, F, G\}$ is simultaneously the Major chord of C ($M_0$) and the Minor chord of F ($m_5$).

    \item \textbf{Context-Dependent Perception:} The interval structure of the chord, defined by the modular distances $\{5, 2, 5\}$ (corresponding to Fourth--Second--Fourth), is symmetric. The perceived quality of the chord depends entirely on inversion and context (specifically, which note is established as the root or bass):
    \begin{itemize}
        \item If the pitch $k$ is established as the root (e.g., C), the intervals above it are $5+2$ (Perfect Fourth + Major Second). The chord functions as \textbf{Major} ($M_k$) and is heard as a suspended fourth chord ($Csus4$).
        \item If the pitch $k+5$ is established as the root (e.g., F), the intervals above it are $2+5$ (Major Second + Perfect Fourth). The chord functions as \textbf{Minor} ($m_{k+5}$) and is heard as a suspended second chord ($Fsus2$).
    \end{itemize}

    \item \textbf{The Janus-Faced Triad:} Physically, the $\Delta=3$ triad is an ambiguous, ``Janus-faced'' harmonic object. It functions as a bridge between the $M$ and $m$ modes not by changing notes (voice leading), but by changing the listener's perspective (re-interpreting the gravitational center). This mathematical degeneracy formalizes the well-known equivalence between quartal and secundal voicings often utilized in modal jazz.
\end{itemize}

\subsection{The ``Wide Thirds'' System Tonnetz}

We now establish the connectivity of the harmonic space. The transformations $P, L, R$ are defined as usual:
\begin{itemize}
    \item \textbf{Parallel ($P$):} Preserves the generator (7). $M_k \leftrightarrow m_k$.
    \item \textbf{Leading-Tone ($L$):} Preserves the minor third (2). $M_k \leftrightarrow m_{k+5}$.
    \item \textbf{Relative ($R$):} Preserves the major third (5). $M_k \leftrightarrow m_{k-2}$ (or $m_{k+10}$).
\end{itemize}

However, because of the Modal Identity ($M_k \equiv m_{k+5}$), the **Leading-Tone ($L$)** transformation maps the chord to itself. Consequently, the edge corresponding to $L$ becomes a loop, and the functional degree of each vertex drops to 2.

Tables \ref{tab:neighbors_M_5_2} and \ref{tab:neighbors_m_5_2} list the neighbors. The $L$ column is crossed out because it represents a trivial identity operation on the set.

\begin{table}[H]
    \centering
    \scriptsize
    \renewcommand{\arraystretch}{1.3}
    
    % LEFT TABLE: Major Chords
    \begin{minipage}[t]{0.48\textwidth}
        \centering
        \begin{tabular}{|c||c|c|c|}
            \hline
            \textbf{Chord} & \textbf{P-neigh.} & \textbf{L-neigh.} & \textbf{R-neigh.} \\[-0.5ex]
            $(M_k)$ & $(m_k)$ & $(m_{k+5})$ & $(m_{k-2})$ \\
            \hline
            $M_0$ & $m_0$ & \tikz[baseline=-3pt]{\node[inner sep=1pt] (A) {$m_5$}; \draw[red, thick] (A.north west) -- (A.south east) (A.north east) -- (A.south west);} & $m_{10}$ \\
            $M_1$ & $m_1$ & \tikz[baseline=-3pt]{\node[inner sep=1pt] (A) {$m_6$}; \draw[red, thick] (A.north west) -- (A.south east) (A.north east) -- (A.south west);} & $m_{11}$ \\
            $M_2$ & $m_2$ & \tikz[baseline=-3pt]{\node[inner sep=1pt] (A) {$m_7$}; \draw[red, thick] (A.north west) -- (A.south east) (A.north east) -- (A.south west);} & $m_0$ \\
            $M_3$ & $m_3$ & \tikz[baseline=-3pt]{\node[inner sep=1pt] (A) {$m_8$}; \draw[red, thick] (A.north west) -- (A.south east) (A.north east) -- (A.south west);} & $m_1$ \\
            $M_4$ & $m_4$ & \tikz[baseline=-3pt]{\node[inner sep=1pt] (A) {$m_9$}; \draw[red, thick] (A.north west) -- (A.south east) (A.north east) -- (A.south west);} & $m_2$ \\
            $M_5$ & $m_5$ & \tikz[baseline=-3pt]{\node[inner sep=1pt] (A) {$m_{10}$}; \draw[red, thick] (A.north west) -- (A.south east) (A.north east) -- (A.south west);} & $m_3$ \\
            $M_6$ & $m_6$ & \tikz[baseline=-3pt]{\node[inner sep=1pt] (A) {$m_{11}$}; \draw[red, thick] (A.north west) -- (A.south east) (A.north east) -- (A.south west);} & $m_4$ \\
            $M_7$ & $m_7$ & \tikz[baseline=-3pt]{\node[inner sep=1pt] (A) {$m_0$}; \draw[red, thick] (A.north west) -- (A.south east) (A.north east) -- (A.south west);} & $m_5$ \\
            $M_8$ & $m_8$ & \tikz[baseline=-3pt]{\node[inner sep=1pt] (A) {$m_1$}; \draw[red, thick] (A.north west) -- (A.south east) (A.north east) -- (A.south west);} & $m_6$ \\
            $M_9$ & $m_9$ & \tikz[baseline=-3pt]{\node[inner sep=1pt] (A) {$m_2$}; \draw[red, thick] (A.north west) -- (A.south east) (A.north east) -- (A.south west);} & $m_7$ \\
            $M_{10}$ & $m_{10}$ & \tikz[baseline=-3pt]{\node[inner sep=1pt] (A) {$m_3$}; \draw[red, thick] (A.north west) -- (A.south east) (A.north east) -- (A.south west);} & $m_8$ \\
            $M_{11}$ & $m_{11}$ & \tikz[baseline=-3pt]{\node[inner sep=1pt] (A) {$m_4$}; \draw[red, thick] (A.north west) -- (A.south east) (A.north east) -- (A.south west);} & $m_9$ \\
            \hline
        \end{tabular}
        \caption{\scriptsize Neighbors of Major chords. The $L$ column is crossed out because $m_{k+5} \equiv M_k$.}
        \label{tab:neighbors_M_5_2}
    \end{minipage}
    \hfill
    % RIGHT TABLE: Minor Chords
    \begin{minipage}[t]{0.48\textwidth}
        \centering
        \begin{tabular}{|c||c|c|c|}
            \hline
            \textbf{Chord} & \textbf{P-neigh.} & \textbf{L-neigh.} & \textbf{R-neigh.} \\[-0.5ex]
            $(m_k)$ & $(M_k)$ & $(M_{k+7})$ & $(M_{k+2})$ \\
            \hline
            $m_0$ & $M_0$ & \tikz[baseline=-3pt]{\node[inner sep=1pt] (A) {$M_7$}; \draw[red, thick] (A.north west) -- (A.south east) (A.north east) -- (A.south west);} & $M_2$ \\
            $m_1$ & $M_1$ & \tikz[baseline=-3pt]{\node[inner sep=1pt] (A) {$M_8$}; \draw[red, thick] (A.north west) -- (A.south east) (A.north east) -- (A.south west);} & $M_3$ \\
            $m_2$ & $M_2$ & \tikz[baseline=-3pt]{\node[inner sep=1pt] (A) {$M_9$}; \draw[red, thick] (A.north west) -- (A.south east) (A.north east) -- (A.south west);} & $M_4$ \\
            $m_3$ & $M_3$ & \tikz[baseline=-3pt]{\node[inner sep=1pt] (A) {$M_{10}$}; \draw[red, thick] (A.north west) -- (A.south east) (A.north east) -- (A.south west);} & $M_5$ \\
            $m_4$ & $M_4$ & \tikz[baseline=-3pt]{\node[inner sep=1pt] (A) {$M_{11}$}; \draw[red, thick] (A.north west) -- (A.south east) (A.north east) -- (A.south west);} & $M_6$ \\
            $m_5$ & $M_5$ & \tikz[baseline=-3pt]{\node[inner sep=1pt] (A) {$M_0$}; \draw[red, thick] (A.north west) -- (A.south east) (A.north east) -- (A.south west);} & $M_7$ \\
            $m_6$ & $M_6$ & \tikz[baseline=-3pt]{\node[inner sep=1pt] (A) {$M_1$}; \draw[red, thick] (A.north west) -- (A.south east) (A.north east) -- (A.south west);} & $M_8$ \\
            $m_7$ & $M_7$ & \tikz[baseline=-3pt]{\node[inner sep=1pt] (A) {$M_2$}; \draw[red, thick] (A.north west) -- (A.south east) (A.north east) -- (A.south west);} & $M_9$ \\
            $m_8$ & $M_8$ & \tikz[baseline=-3pt]{\node[inner sep=1pt] (A) {$M_3$}; \draw[red, thick] (A.north west) -- (A.south east) (A.north east) -- (A.south west);} & $M_{10}$ \\
            $m_9$ & $M_9$ & \tikz[baseline=-3pt]{\node[inner sep=1pt] (A) {$M_4$}; \draw[red, thick] (A.north west) -- (A.south east) (A.north east) -- (A.south west);} & $M_{11}$ \\
            $m_{10}$ & $M_{10}$ & \tikz[baseline=-3pt]{\node[inner sep=1pt] (A) {$M_5$}; \draw[red, thick] (A.north west) -- (A.south east) (A.north east) -- (A.south west);} & $M_0$ \\
            $m_{11}$ & $M_{11}$ & \tikz[baseline=-3pt]{\node[inner sep=1pt] (A) {$M_6$}; \draw[red, thick] (A.north west) -- (A.south east) (A.north east) -- (A.south west);} & $M_1$ \\
            \hline
        \end{tabular}
        \caption{\scriptsize Neighbors of Minor chords. The $L$ column is crossed out because $M_{k+7} \equiv m_k$.}
        \label{tab:neighbors_m_5_2}
    \end{minipage}
\end{table}

\subsection{The Degenerate Tonnetz and its Levi Graph}

To understand the topology of this system, we analyze the effect of the modal identity on the standard Neo-Riemannian transformations. In the standard $\Delta=1$ system, the $L$ transformation connects a chord to a distinct neighbor ($L: M_k \to m_{k+4}$), creating a graph where every vertex has degree 3.

In the ``Wide Thirds'' $\Delta=3$ system, the Leading-Tone transformation ($M_k \to m_{k+5}$) maps the chord to itself ($M_k \equiv m_{k+5}$). Consequently, the edge corresponding to $L$ becomes a loop (or an identity mapping), and the functional degree of each vertex drops from 3 to 2. The only remaining active connections are $P$ (Parallel) and $R$ (Relative).

To visualize this structure globally, we construct the **Levi graph** of the configuration. We treat the set of 12 Major chords as ``points'' and the set of 12 Minor chords as ``lines''. An incidence (edge) exists if a Major chord transforms into a Minor chord via a Parallel ($P$) or Relative ($R$) motion.

Since each Major chord connects exactly to two distinct Minor chords ($P$ and $R$), and each Minor chord connects to two distinct Major chords, the global structure forms a **Levi graph of a $(12_2)$ configuration**.

We present this graph in Figure \ref{fig:levi_delta3_full} by arranging all 24 chords on a single circle in chromatic order ($M_0, m_0, M_1, m_1, \dots$). The connections reveal the hidden topology.

\begin{figure}[H]
\centering
\begin{tikzpicture}[scale=0.6, transform shape]
    \def\R{6.5} % Radius

    % Style for nodes
    \tikzset{
        majnode/.style={circle, draw=black!70, fill=white, thick, minimum size=0.8cm, inner sep=0pt, font=\scriptsize},
        minnode/.style={circle, draw=black!70, fill=white, thick, minimum size=0.8cm, inner sep=0pt, font=\scriptsize}
    }

    % --- DRAW NODES ---
    % M_k at 90 - k*30
    % m_k at 90 - k*30 - 15
    \foreach \k in {0,...,11} {
        \pgfmathsetmacro{\angM}{90 - \k*30}
        \pgfmathsetmacro{\angm}{90 - \k*30 - 15}
        \node[majnode] (M\k) at (\angM:\R) {$M_{\k}$};
        \node[minnode] (m\k) at (\angm:\R) {$m_{\k}$};
    }

    % --- BLUE GRAPH CONNECTIONS (Even Roots: 0, 2, 4, 6, 8, 10) ---
    % Path: M0 - m0 - M2 - m2 - M4 - m4 - M6 - m6 - M8 - m8 - M10 - m10 - M0
    \draw[blue, thick] (M0) -- (m0);
    \draw[blue, thick] (m0) -- (M2);
    \draw[blue, thick] (M2) -- (m2);
    \draw[blue, thick] (m2) -- (M4);
    \draw[blue, thick] (M4) -- (m4);
    \draw[blue, thick] (m4) -- (M6);
    \draw[blue, thick] (M6) -- (m6);
    \draw[blue, thick] (m6) -- (M8);
    \draw[blue, thick] (M8) -- (m8);
    \draw[blue, thick] (m8) -- (M10);
    \draw[blue, thick] (M10) -- (m10);
    \draw[blue, thick] (m10) -- (M0);

    % --- RED GRAPH CONNECTIONS (Odd Roots: 1, 3, 5, 7, 9, 11) ---
    % Path: M1 - m1 - M3 - m3 - M5 - m5 - M7 - m7 - M9 - m9 - M11 - m11 - M1
    \draw[red, thick] (M1) -- (m1);
    \draw[red, thick] (m1) -- (M3);
    \draw[red, thick] (M3) -- (m3);
    \draw[red, thick] (m3) -- (M5);
    \draw[red, thick] (M5) -- (m5);
    \draw[red, thick] (m5) -- (M7);
    \draw[red, thick] (M7) -- (m7);
    \draw[red, thick] (m7) -- (M9);
    \draw[red, thick] (M9) -- (m9);
    \draw[red, thick] (m9) -- (M11);
    \draw[red, thick] (M11) -- (m11);
    \draw[red, thick] (m11) -- (M1);

\end{tikzpicture}
\caption{The full Levi graph of the ``Wide Thirds'' Tonnetz arranged on a circle. The blue edges connect the chords with even roots, while the red edges connect chords with odd roots. The interleaving of the nodes visually demonstrates the symmetry of the system.}
\label{fig:levi_delta3_full}
\end{figure}
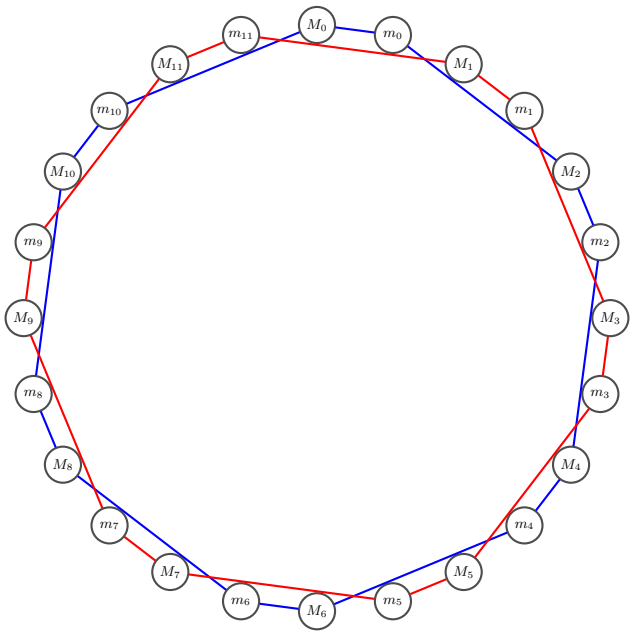

\subsubsection{Topological Decomposition}

The coloring in Figure \ref{fig:levi_delta3_full} makes the fundamental property of the system immediately visible. Although constructed as a single harmonic system, the graph naturally separates into two disjoint, non-intersecting subgraphs.

The blue path traces a closed cycle involving only the chords with even roots ($M_{2k}, m_{2k}$). The red path traces a similar cycle for the odd roots ($M_{2k+1}, m_{2k+1}$). No blue edge ever meets a red edge.

\begin{theorem}[Topological Decomposition]
The 12-TET Tonnetz for the solution $(t=5, s=2)$ is isomorphic to the Levi graph of a $(12_2)$ configuration, which decomposes into two disjoint cycles of length 12.
\end{theorem}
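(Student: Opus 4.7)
The plan is to reduce the theorem to a parity argument on the chord indices. Since the Relative transformation acts as the identity on the chord set by the Modal Identity theorem (giving $R(D_r)=M_{r-3}=D_r$ and $R(M_r)=D_{r+3}=M_r$), the only nontrivial edges of the Tonnetz come from the Parallel and Leading-Tone transformations. Reading off the rules, a major chord $D_r$ is joined to exactly $M_r$ (via $P$) and $M_{r+4}$ (via $L$), while a minor chord $M_r$ is joined to exactly $D_r$ (via $P$) and $D_{r-4}$ (via $L$). So the Tonnetz is a bipartite graph with parts $\{D_r\}$ and $\{M_r\}$ in which every vertex has degree $2$. Any such graph is a disjoint union of even cycles, so the bulk of the work is to show that there are precisely two cycles, each of length $10$.

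For that I would track the parity of the index modulo $2$. Both the $P$-edge $D_r\sim M_r$ and the $L$-edge $D_r\sim M_{r+4}$ change the mode (major $\leftrightarrow$ minor) but preserve the parity of the root, because $4$ is even. Hence no edge of the Tonnetz joins a vertex with even index to one with odd index. This gives a partition of the $20$ vertices into two subsets of size $10$, namely the ``Blue'' set $\{D_0,M_0,D_2,M_2,D_4,M_4,D_6,M_6,D_8,M_8\}$ and the ``Red'' set $\{D_1,M_1,D_3,M_3,D_5,M_5,D_7,M_7,D_9,M_9\}$, and each is closed under the $P$ and $L$ operations.

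Next I would show that each of these two subgraphs is itself connected, whence, being $2$-regular on $10$ vertices, it must be a single $10$-cycle $C_{10}$. Concretely, starting from $D_0$ and alternately applying $P$ and $L$ I would exhibit the explicit Hamiltonian cycle
\begin{equation*}
D_0 \xrightarrow{P} M_0 \xrightarrow{L} D_6 \xrightarrow{P} M_6 \xrightarrow{L} D_2 \xrightarrow{P} M_2 \xrightarrow{L} D_8 \xrightarrow{P} M_8 \xrightarrow{L} D_4 \xrightarrow{P} M_4 \xrightarrow{L} D_0
\end{equation*}
through the Blue set, and an analogous cycle starting from $D_1$ through the Red set, thereby verifying that the entire orbit is traversed. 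To finish the proof I would invoke the standard fact that the Levi graph of an $n_2$ configuration is the bipartite $2$-regular graph on $2n$ vertices whose black/white classes are its points and lines; when this Levi graph is connected it is the cycle $C_{2n}$. Each Blue and Red component is a $C_{10}$, which is exactly the Levi graph of a $(5_2)$ configuration, and the full graph is the disjoint union of two such, which is the Levi graph of the disconnected $(10_2)$ configuration obtained as their formal sum.

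The main obstacle, and the only genuinely combinatorial point, is certifying connectedness within each parity class; everything else is either the parity observation or a structural classification of $2$-regular bipartite graphs. The neatest way to discharge it is simply to exhibit the Hamiltonian cycle above, which reduces the argument to checking that the orbit of $\{P,L\}$ on $D_0$ contains all five even-rooted major chords, and this is immediate from the fact that $L\circ P$ acts on major chords as $D_r\mapsto D_{r-4}$, whose orbit on even residues modulo $10$ is $\{0,6,2,8,4\}$, a full set of representatives.
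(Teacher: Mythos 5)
Your proposal is correct and follows the same route the paper takes: the paper's "proof" consists of the parity coloring of Figure \ref{fig:levi_full} (blue edges among even roots, red among odd, with the observation that no blue edge meets a red edge), and your blue $10$-cycle is exactly the paper's Component A traversed in the opposite direction. The only difference is that you supply the rigor the paper leaves to visual inspection — explicitly noting that $P$ fixes and $L$ shifts the root by the even number $4$, and certifying connectedness of each parity class via the orbit of $L\circ P$ acting as $r\mapsto r-4$ on even residues — which is a welcome tightening rather than a new argument.
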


This means the harmonic Universe of the $\Delta=3$ system consists of two parallel ``harmonic realities'' that never intersect. One reality consists exclusively of the chords rooted on the Whole-Tone scale starting on C (0, 2, 4...), and the other on the Whole-Tone scale starting on C$^\sharp$ (1, 3, 5...).

Geometrically, the second graph is a perfect copy of the first, rotated by an angle of $\pi/12$ ($15^\circ$, one step in the circular arrangement). We display the separated components in Figure \ref{fig:levi_delta3_split}, preserving their original geometric positions to highlight the rotation.

\begin{figure}[H]
\centering
\begin{minipage}{0.48\textwidth}
\centering
\textbf{Component A (The Blue Cycle)}
\vspace{0.3cm}
\begin{tikzpicture}[scale=0.6, transform shape]
    \def\R{5.5} 
    \tikzset{majnode/.style={circle, draw=black!70, fill=white, thick, minimum size=0.8cm, inner sep=0pt}}
    \tikzset{minnode/.style={circle, draw=black!70, fill=white, thick, minimum size=0.8cm, inner sep=0pt}}

    % BLUE CYCLE NODES ONLY
    \foreach \k in {0, 2, ..., 10} {
        \pgfmathsetmacro{\angM}{90 - \k*30}
        \pgfmathsetmacro{\angm}{90 - \k*30 - 15}
        \node[majnode] (M\k) at (\angM:\R) {$M_{\k}$};
        \node[minnode] (m\k) at (\angm:\R) {$m_{\k}$};
    }
    
    % BLUE EDGES
    \draw[blue, thick] (M0)--(m0)--(M2)--(m2)--(M4)--(m4)--(M6)--(m6)--(M8)--(m8)--(M10)--(m10)--(M0);
\end{tikzpicture}
\end{minipage}
\hfill
\begin{minipage}{0.48\textwidth}
\centering
\textbf{Component B (The Red Cycle)}
\vspace{0.3cm}
\begin{tikzpicture}[scale=0.6, transform shape]
    \def\R{5.5} 
    \tikzset{majnode/.style={circle, draw=black!70, fill=white, thick, minimum size=0.8cm, inner sep=0pt}}
    \tikzset{minnode/.style={circle, draw=black!70, fill=white, thick, minimum size=0.8cm, inner sep=0pt}}

    % RED CYCLE NODES ONLY
    \foreach \k in {1, 3, ..., 11} {
        \pgfmathsetmacro{\angM}{90 - \k*30}
        \pgfmathsetmacro{\angm}{90 - \k*30 - 15}
        \node[majnode] (M\k) at (\angM:\R) {$M_{\k}$};
        \node[minnode] (m\k) at (\angm:\R) {$m_{\k}$};
    }
    
    % RED EDGES
    \draw[red, thick] (M1)--(m1)--(M3)--(m3)--(M5)--(m5)--(M7)--(m7)--(M9)--(m9)--(M11)--(m11)--(M1);
\end{tikzpicture}
\end{minipage}
\caption{The decomposition of the 12-TET $\Delta=3$ Tonnetz into two independent cycles. The blue cycle contains all chords rooted on even integers, and the red cycle contains all chords rooted on odd integers.}
\label{fig:levi_delta3_split}
\end{figure}
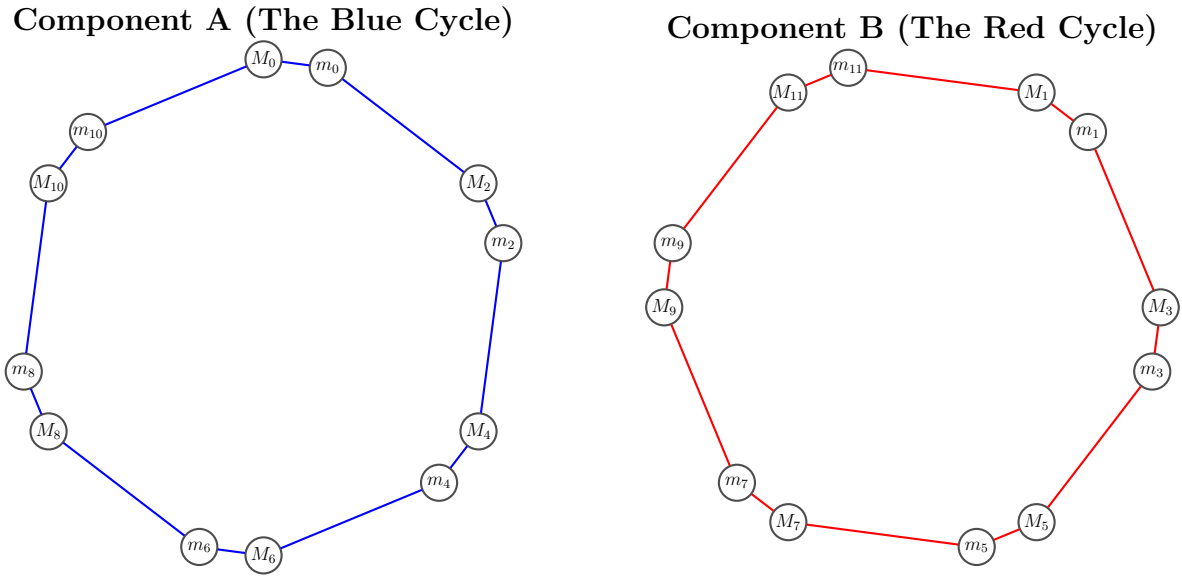

\subsection{The Functional Perspective: Restoring Connectivity in 3D}

Our analysis of ``modal degeneracy'' in Theorem 2 relied on a strict set-theoretical perspective: since the pitch-class sets of $M_k$ and $m_{k+5}$ are identical, we treated them as the same vertex in the Tonnetz. This reduced the graph valence to 2 and caused the topological splitting described in Section 6.4.

However, from a functional (Riemannian) perspective, a chord is defined not just by its pitch content, but by its root. In this view, the Major chord $M_0 = \{0, 5, 7\}$ rooted at 0 ($Csus4$) is distinct from the Minor chord $m_5 = \{5, 7, 0\}$ rooted at 5 ($Fsus2$), despite comprising the same notes. The ``Leading-Tone'' transformation ($L$) is no longer an identity map, but a bijection -- \emph{a wormhole} -- mapping a chord from the ``Even Universe'' to the ``Odd Universe'' (since $k$ and $k+5$ have opposite parity).

If we adopt this functional differentiation, the vertex set doubles from 12 unique sets to 24 functional chords. The topology undergoes a dimensional lift:
\begin{enumerate}
    \item The ``Parallel'' ($P$) and ``Relative'' ($R$) transformations continue to link chords within the same parity cycle (forming the Blue and Red rings derived in Section 6.4).
    \item The ``Leading-Tone'' ($L$) transformation now acts as a transversal edge, connecting every node $M_k$ in the Blue cycle to a node $m_{k+5}$ in the Red cycle (and vice versa).
\end{enumerate}

We explicitly list these restored functional connections in the tables below. Unlike the degenerate case, the $L$-column now provides valid, distinct outputs that link the previously disjoint cycles.

\begin{figure}[H]
  \centering
   \scriptsize
    \renewcommand{\arraystretch}{1.1.}
    \begin{minipage}{0.48\textwidth}
        \centering
        \renewcommand{\arraystretch}{1.0}
        \scalebox{0.85}{
        \begin{tabular}{|c||c|c|c|}
        \hline
        \textbf{Input} & $P$ & $L$ (Wormhole) & $R$ \\
        \hline
        $M_0$ & $m_0$ & $m_5$ & $m_{10}$ \\
        $M_1$ & $m_1$ & $m_6$ & $m_{11}$ \\
        $M_2$ & $m_2$ & $m_7$ & $m_0$ \\
        $M_3$ & $m_3$ & $m_8$ & $m_1$ \\
        $M_4$ & $m_4$ & $m_9$ & $m_2$ \\
        $M_5$ & $m_5$ & $m_{10}$ & $m_3$ \\
        $M_6$ & $m_6$ & $m_{11}$ & $m_4$ \\
        $M_7$ & $m_7$ & $m_0$ & $m_5$ \\
        $M_8$ & $m_8$ & $m_1$ & $m_6$ \\
        $M_9$ & $m_9$ & $m_2$ & $m_7$ \\
        $M_{10}$ & $m_{10}$ & $m_3$ & $m_8$ \\
        $M_{11}$ & $m_{11}$ & $m_4$ & $m_9$ \\
        \hline
        \end{tabular}
        }
        \caption{\scriptsize Functional Transformations on Major Chords. The $L$ transformation is now active, mapping $M_k$ to the functionally distinct $m_{k+5}$.}
    \end{minipage}
    \hfill
    \begin{minipage}{0.48\textwidth}
        \centering
        \renewcommand{\arraystretch}{1.0}
        \scalebox{0.85}{
        \begin{tabular}{|c||c|c|c|}
        \hline
        \textbf{Input} & $P$ & $L$ (Wormhole) & $R$ \\
        \hline
        $m_0$ & $M_0$ & $M_7$ & $M_2$ \\
        $m_1$ & $M_1$ & $M_8$ & $M_3$ \\
        $m_2$ & $M_2$ & $M_9$ & $M_4$ \\
        $m_3$ & $M_3$ & $M_{10}$ & $M_5$ \\
        $m_4$ & $M_4$ & $M_{11}$ & $M_6$ \\
        $m_5$ & $M_5$ & $M_0$ & $M_7$ \\
        $m_6$ & $M_6$ & $M_1$ & $M_8$ \\
        $m_7$ & $M_7$ & $M_2$ & $M_9$ \\
        $m_8$ & $M_8$ & $M_3$ & $M_{10}$ \\
        $m_9$ & $M_9$ & $M_4$ & $M_{11}$ \\
        $m_{10}$ & $M_{10}$ & $M_5$ & $M_0$ \\
        $m_{11}$ & $M_{11}$ & $M_6$ & $M_1$ \\
        \hline
        \end{tabular}
        }
        \caption{\scriptsize Functional Transformations on Minor Chords. The $L$ transformation is now active, mapping $m_k$ to the functionally distinct $M_{k+7}$.}
    \end{minipage}
\end{figure}

Consequently, the graph valence is restored to 3, and the disconnected components merge into a single, connected 3-dimensional structure. The two disjoint cycles of the 2D projection become the bases of a **prism graph** (or cylindrical lattice) in 3D space.

We visualize this ``Functional Cylinder'' below. The Blue and Red cycles form the top and bottom bases. To enhance readability, we apply a rotational offset to the Upper (Red) ring, creating a ``twisted'' cylinder where the functional $L$-connections form the bridges between the universes.

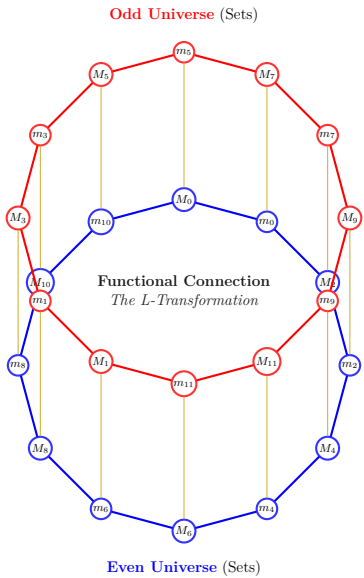
\begin{figure}[H]
\centering
\begin{tikzpicture}[scale=0.49, transform shape]
    % Definitions for 3D illusion
    \def\R{4.5} % Radius of rings
    \def\h{4.0} % Height between rings
    \def\angShift{90} % Starting angle
    \def\step{30} % 360 / 12 = 30 degrees
    
    % =========================================================
    % LOWER RING (BLUE / EVEN UNIVERSE)
    % Order: M0, m0, M2, m2, M4, m4...
    % =========================================================
    \foreach \i/\label [count=\idx from 0] in {
        0/M_0, 1/m_0, 
        2/M_2, 3/m_2, 
        4/M_4, 5/m_4, 
        6/M_6, 7/m_6, 
        8/M_8, 9/m_8, 
        10/M_{10}, 11/m_{10}} {
            \pgfmathsetmacro{\currAng}{\angShift - \idx*\step}
            \node[circle, draw=blue!80, fill=white, thick, inner sep=1pt, font=\scriptsize] (L\idx) at (\currAng:\R) {$\label$};
    }

    % Draw Edges for Lower Ring (P and R)
    \draw[blue, thick] (L0)--(L1)--(L2)--(L3)--(L4)--(L5)--(L6)--(L7)--(L8)--(L9)--(L10)--(L11)--(L0);

    % =========================================================
    % UPPER RING (RED / ODD UNIVERSE)
    % Order: m5, M7, m7, M9, m9, M11... (Rotated to align M0->m5)
    % Shifted visually to create the prism effect
    % =========================================================
    \begin{scope}[yshift=\h cm]
        \foreach \i/\label [count=\idx from 0] in {
            0/m_5, 1/M_7, 
            2/m_7, 3/M_9, 
            4/m_9, 5/M_{11}, 
            6/m_{11}, 7/M_1, 
            8/m_1, 9/M_3, 
            10/m_3, 11/M_5} {
                \pgfmathsetmacro{\currAng}{\angShift - \idx*\step}
                \node[circle, draw=red!80, fill=white, thick, inner sep=1pt, font=\scriptsize] (U\idx) at (\currAng:\R) {$\label$};
        }
        
        % Draw Edges for Upper Ring (P and R)
        \draw[red, thick] (U0)--(U1)--(U2)--(U3)--(U4)--(U5)--(U6)--(U7)--(U8)--(U9)--(U10)--(U11)--(U0);
    \end{scope}

    % =========================================================
    % FUNCTIONAL CONNECTIONS (L-TRANSFORMATION)
    % =========================================================
    % We connect L_i to U_i based on the specific alignment chosen above
    % M0 (L0) connects to m5 (U0)
    % m0 (L1) connects to M7 (U1)
    % etc.
    \foreach \i in {0,...,11} {
        \draw[mygold, thin] (L\i) -- (U\i);
    }

    % Labeling
    \node at (0, -\R-1.0) {\textcolor{blue}{\textbf{Even Universe}} (Sets)};
    \node at (0, \h+\R+1.0) {\textcolor{red}{\textbf{Odd Universe}} (Sets)};
    \node[align=center, fill=white, inner sep=2pt, opacity=0.9] at (0, \h/2) {\textbf{Functional Connection}\\\textit{The L-Transformation}};

\end{tikzpicture}
\caption{\scriptsize The ``Functional Cylinder'' of the Wide Thirds System ($\Delta=3$). The Blue and Red cycles form the geometric bases, representing the invariant pitch-class sets. The gold lines represent the Leading-Tone ($L$) transformation, acting as a wormhole between the disjoint universes of Even and Odd roots.}
\label{fig:cylinder_delta3}
\end{figure}

\subsection{Musical Implications: The Phantom Modulation}

This functional restoration of connectivity in 3D has profound musical implications, extending the concept of ``Janus-faced'' triads introduced in Section 6.1.1.

In classical 12-TET harmony, modulation typically involves changing pitch classes (e.g., introducing a sharp or flat). However, in the $\Delta=3$ system, the $L$-transformation represented by the vertical edges of the cylinder corresponds to a \textbf{phantom modulation}. The physical sound remains identical (the pitch-class set is invariant), but the listener is instantaneously transported from the ``Even Universe'' to the ``Odd Universe'' purely by re-interpreting the root.

For example, traversing the wormhole $M_0 \xrightarrow{L} m_5$ keeps the notes $\{C, F, G\}$ constant but shifts the tonal center from $C$ to $F$ (a distance of 5 semitones). The cylinder acts as a topological map of this duality, unifying the disjoint set-theoretical cycles into a single functional object.

\subsection{The Combinatorial Structure: The Cubic Graph}
\label{sec:restored_cubic}

By explicitly distinguishing between $M_k$ and $m_{k+5}$ and treating the identity relation as a valid edge ($L$), the system reveals a highly regular internal geometry. The interactions of $P$, $L$, and $R$ generate a unified graph that can be analyzed using the language of graph theory.

\subsubsection{The Geometric Structure} \label{gs}
In this restored view, every Major chord $M_k$ connects to exactly three Minor chords:
\begin{itemize}
    \item $m_k$ via the Parallel operation $P$ (intra-universe),
    \item $m_{k-2}$ via the Relative operation $R$ (intra-universe),
    \item $m_{k+5}$ via the Wormhole operation $L$ (inter-universe).
\end{itemize}
Symmetrically, every Minor chord connects to exactly three Major chords. This transforms our graph into a connected, cubic (3-valent) bipartite graph with $V=24$ vertices ($12$ Majors and $12$ Minors).

This structure constitutes a \textbf{connected cubic bipartite graph}. We can interpret the 12 Minor chords as ``Points'' and the 12 Major chords as ``Lines''. The incidence relation is defined by our harmonic connections: each Line (Major chord) passes through exactly 3 Points (the Minors it connects to), and each Point (Minor chord) lies on exactly 3 Lines.

However, it should be mentioned that these are not geometric points and lines in the strict sense. As we will see in the next section, in this interpretation the graph admits that two points lie on two different lines and two lines can meet in more than one point.

\subsubsection{Visualization: The Levi Graphs}
The incidence graph of a configuration is formally known as a \textbf{Levi graph}. In Figure \ref{fig:levi_concentric_delta3}, we present this graph in a layout corresponding to the spectral analysis from the previous sections. The Blue and Red universes form two concentric rings, with the $P$ and $R$ operations creating the perimeter of each ring. The $L$ operations (gold lines) act as radial wormholes fusing the two worlds together.

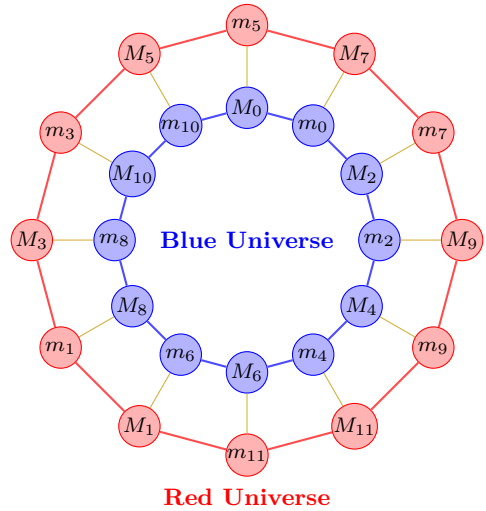
\begin{figure}[H]
    \centering
    \begin{tikzpicture}[scale=0.55, >=stealth]
        % --- Parameters ---
        \def\Rout{5.2} % Radius Red
        \def\Rin{3.2}  % Radius Blue
        \definecolor{mygold}{RGB}{212, 175, 55} % Gold color
        \def\step{30}

        % Style for smaller nodes
        \tikzset{
            mynode/.style={circle, inner sep=0pt, minimum size=0.55cm, font=\tiny}
        }

        % ==========================================
        % --- RED UNIVERSE (Outer Ring - Odd Roots) ---
        % ==========================================
        \foreach \i/\nid/\lab [count=\xi] in {
            0/m_5/m_{5}, 1/M_7/M_{7}, 
            2/m_7/m_{7}, 3/M_9/M_{9}, 
            4/m_9/m_{9}, 5/M_11/M_{11}, 
            6/m_11/m_{11}, 7/M_1/M_{1}, 
            8/m_1/m_{1}, 9/M_3/M_{3}, 
            10/m_3/m_{3}, 11/M_5/M_{5}} 
        {
            \pgfmathsetmacro{\angle}{90 - \xi*\step + \step}
            \coordinate (RedPos\xi) at (\angle:\Rout);
            \node[mynode, draw=red, fill=red!30, alias=RN_\nid] (RedNode\i) at (RedPos\xi) {$\lab$};
        }
        
        % Draw Edges Red (P and R)
        \foreach \i in {0,...,11} {
            \pgfmathsetmacro{\next}{int(mod(\i+1,12))}
            \draw[red!70, thick] (RedNode\i) -- (RedNode\next);
        }

        % ==========================================
        % --- BLUE UNIVERSE (Inner Ring - Even Roots) ---
        % ==========================================
        \foreach \j/\nid/\lab [count=\xj] in {
            0/M_0/M_{0}, 1/m_0/m_{0}, 
            2/M_2/M_{2}, 3/m_2/m_{2}, 
            4/M_4/M_{4}, 5/m_4/m_{4}, 
            6/M_6/M_{6}, 7/m_6/m_{6}, 
            8/M_8/M_{8}, 9/m_8/m_{8}, 
            10/M_10/M_{10}, 11/m_10/m_{10}}
        {
            \pgfmathsetmacro{\angle}{90 - \xj*\step + \step}
            \coordinate (BluePos\xj) at (\angle:\Rin);
            \node[mynode, draw=blue, fill=blue!30, alias=BN_\nid] (BlueNode\j) at (BluePos\xj) {$\lab$};
        }

        % Draw Edges Blue (P and R)
        \foreach \j in {0,...,11} {
            \pgfmathsetmacro{\next}{int(mod(\j+1,12))}
            \draw[blue!70, thick] (BlueNode\j) -- (BlueNode\next);
        }

        % ==========================================
        % --- GOLD BRIDGES (L-Transformation) ---
        % ==========================================
        % L connects M_k <-> m_{k+5}. Using dashed, thin lines.
        \foreach \k in {0, 2, ..., 10} {
             \pgfmathsetmacro{\targetm}{int(mod(\k+5,12))}
             \draw[thin, color=mygold] (BN_M_\k) -- (RN_m_\targetm);
        }
        \foreach \k in {0, 2, ..., 10} {
             \pgfmathsetmacro{\targetM}{int(mod(\k+7,12))}
             \draw[ thin, color=mygold] (BN_m_\k) -- (RN_M_\targetM);
        }

        % --- Labels ---
        \node at (0, 0) {\textcolor{blue}{\textbf{\scriptsize Blue Universe}}};
        \node at (0, -\Rout-1.0) {\textcolor{red}{\textbf{\scriptsize Red Universe}}};

    \end{tikzpicture}
    \caption{The Levi graph representing the \texorpdfstring{$\Delta=3$}{Delta=3} configuration. The Blue and Red universes are visualized as concentric rings formed by \texorpdfstring{$P$}{P} and \texorpdfstring{$R$}{R} connections, connected by radial \texorpdfstring{$L$}{L}-wormholes (gold lines), uniting the system.}
    \label{fig:levi_concentric_delta3}
\end{figure}

In Figure \ref{fig:levi_circular_delta3}, we present two visualizations of the graph. On the left, chords are arranged in the standard sequential notation, highlighting the local alternating nature. On the right, the chords are arranged according to the Hamiltonian cycle, revealing the global symmetry.

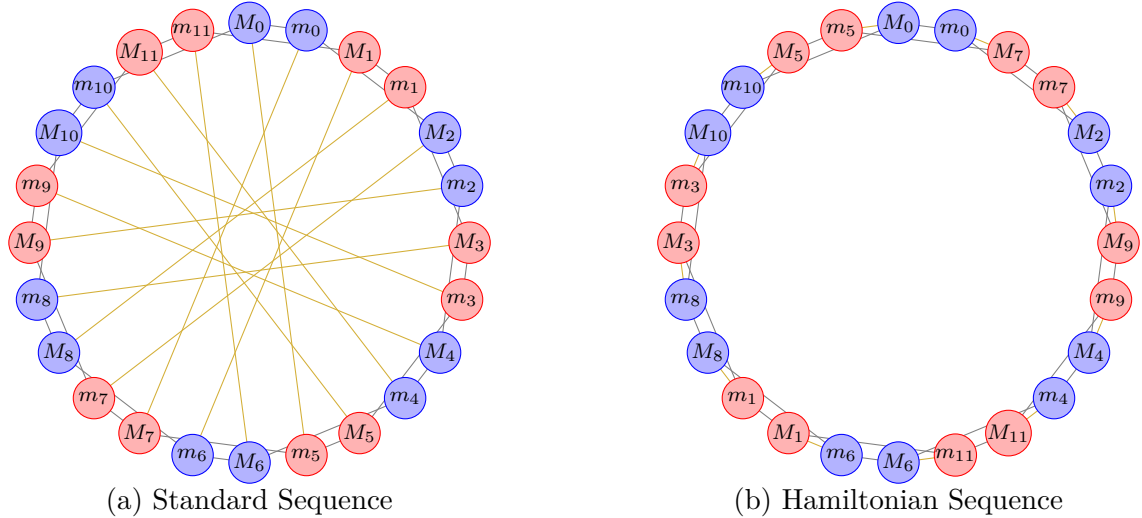
\begin{figure}[H]
    \centering
    % Define style for smaller nodes globally for this figure
    \tikzset{
        mynode/.style={circle, inner sep=0pt, minimum size=0.55cm, font=\tiny}
    }

    % --- LEFT FIGURE: STANDARD SEQUENTIAL ---
    \begin{minipage}{0.48\textwidth}
        \centering
        \begin{tikzpicture}[scale=0.65]
            \def\R{4.5}
            \definecolor{mygold}{RGB}{212, 175, 55}

            % 1. Coordinates: Standard Sequence
            \foreach \i in {0,1,...,11} {
                \pgfmathsetmacro{\angM}{90 - \i*30}
                \pgfmathsetmacro{\angm}{90 - \i*30 - 15}
                \coordinate (cM\i) at (\angM:\R);
                \coordinate (cm\i) at (\angm:\R);
            }

            % 2. Edges
            \foreach \i in {0,1,...,11} {
                % P edges
                \draw[gray, thin] (cM\i) -- (cm\i);
                % R edges
                \pgfmathsetmacro{\idxR}{int(mod(\i+10,12))}
                \draw[gray, thin] (cM\i) -- (cm\idxR);
                % L edges (Wormhole) - THIN dashed lines
                \pgfmathsetmacro{\idxL}{int(mod(\i+5,12))}
                \draw[thin, color=mygold] (cM\i) -- (cm\idxL);
            }

            % 3. Nodes (Small)
            \foreach \i in {0,1,...,11} {
                \pgfmathsetmacro{\isOdd}{int(mod(\i,2))}
                \ifnum\isOdd=0
                    \node[mynode, draw=blue, fill=blue!30] at (cM\i) {$M_{\i}$};
                    \node[mynode, draw=blue, fill=blue!30] at (cm\i) {$m_{\i}$};
                \else
                    \node[mynode, draw=red, fill=red!30] at (cM\i) {$M_{\i}$};
                    \node[mynode, draw=red, fill=red!30] at (cm\i) {$m_{\i}$};
                \fi
            }
            \node at (0, -\R-0.8) {\small (a) Standard Sequence};
        \end{tikzpicture}
    \end{minipage}%
    \hfill
    % --- RIGHT FIGURE: HAMILTONIAN SEQUENCE ---
    \begin{minipage}{0.48\textwidth}
        \centering
        \begin{tikzpicture}[scale=0.65]
            \def\R{4.5}
            \definecolor{mygold}{RGB}{212, 175, 55}

            % 1. Coordinates: Hamiltonian Sequence
            \foreach \k/\pos in {0/0, 7/1, 2/2, 9/3, 4/4, 11/5, 6/6, 1/7, 8/8, 3/9, 10/10, 5/11} {
                \pgfmathsetmacro{\angM}{90 - \pos*30}
                \pgfmathsetmacro{\angm}{90 - \pos*30 - 15}
                \coordinate (cM\k) at (\angM:\R);
                \coordinate (cm\k) at (\angm:\R);
            }

            % 2. Edges
            \foreach \i in {0,1,...,11} {
                \draw[gray, thin] (cM\i) -- (cm\i);
                \pgfmathsetmacro{\idxR}{int(mod(\i+10,12))}
                \draw[gray, thin] (cM\i) -- (cm\idxR);
                \pgfmathsetmacro{\idxL}{int(mod(\i+5,12))}
                \draw[thin, color=mygold] (cM\i) -- (cm\idxL);
            }

            % 3. Nodes (Small)
            \foreach \i in {0,1,...,11} {
                \pgfmathsetmacro{\isOdd}{int(mod(\i,2))}
                \ifnum\isOdd=0
                    \node[mynode, draw=blue, fill=blue!30] at (cM\i) {$M_{\i}$};
                    \node[mynode, draw=blue, fill=blue!30] at (cm\i) {$m_{\i}$};
                \else
                    \node[mynode, draw=red, fill=red!30] at (cM\i) {$M_{\i}$};
                    \node[mynode, draw=red, fill=red!30] at (cm\i) {$m_{\i}$};
                \fi
            }
            \node at (0, -\R-0.8) {\small (b) Hamiltonian Sequence};
        \end{tikzpicture}
    \end{minipage}
    
    \caption{The Connected Levi Graph visualized in two arrangements. (a) Standard sequential notation. (b) Hamiltonian perimeter arrangement following the \textbf{``Cycle of Fifths''}. The node size is reduced to reveal the delicate gold Wormholes (\texorpdfstring{$L$}{L}) connecting them.}
    \label{fig:levi_circular_delta3}
\end{figure}

\subsubsection{Minimal Chiral Cycles}%\label{sec:invariants_delta3}
The introduction of wormholes significantly reduces the girth (shortest cycle length) of the graph. While the intra-universe graphs have a girth of 12, the restored cubic graph allows for tight loops of length 4.

\begin{theorem}
In the restored cubic system, there are exactly two isomorphism classes of minimal cycles (of length 4) under the rotational symmetry of \texorpdfstring{$\mathbb{Z}_{12}$}{Z12}. These classes are distinguished by their chirality (direction of winding through the wormhole).
\end{theorem}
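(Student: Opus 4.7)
The plan is first to enumerate all minimal cycles in the restored cubic bipartite graph, and then to refine the orbit structure of the $\mathbb{Z}_{10}$-rotation $\rho : r \mapsto r+1$ by tracking orientation through the wormhole edges.

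First I would locate the 4-cycles by exploiting bipartiteness. Any 4-cycle has the form $D_a - M_b - D_c - M_d - D_a$, hence is determined by an unordered pair $\{D_a, D_c\}$ of distinct Majors together with two common Minor neighbors. Using $N(D_r) = \{M_r, M_{r+4}, M_{r+7}\}$, for each non-zero shift $d = c - a$ I would compute $|N(D_a) \cap N(D_{a+d})|$ as the number of solutions $(i,j) \in \{0,4,7\}^2$ with $i - j \equiv d \pmod{10}$: the answer is $2$ when $d \in \{3, 7\}$, $1$ when $d \in \{4, 6\}$, and $0$ otherwise. Since $\{a, a+3\} = \{a+3, (a+3)+7\}$ in $\mathbb{Z}_{10}$, the shifts $d=3$ and $d=7$ yield the same ten unordered Major pairs, each contributing exactly one 4-cycle
\[ C_a = D_a - M_a - D_{a+3} - M_{a+7} - D_a, \qquad a \in \mathbb{Z}_{10}. \]
A direct check against the transformation tables of Section~\ref{sec:restored_cubic} confirms that each $C_a$ carries the cyclic edge-label pattern $(P, R, L, R)$ with its two wormhole ($R$) edges in opposite positions.

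Next I would analyze the action of $\rho$. Since $\rho$ translates all subscripts uniformly, $\rho(C_a) = C_{a+1}$, and so the ten cycles as unordered vertex sets form a single $\mathbb{Z}_{10}$-orbit. The doubling into two chirality classes emerges when the cycle is oriented. Reading $C_a$ as $D_a \to M_a \to D_{a+3} \to M_{a+7} \to D_a$, both wormhole edges are traversed from Minor to Major, corresponding to the common index shift $+3 \pmod{10}$; reading the reverse traversal, both wormholes are crossed from Major to Minor, a shift of $-3 \equiv +7$. This common sign along the wormhole steps is precisely the ``direction of winding through the wormhole'', and since $\rho$ is a rigid translation of indices, it preserves the sign of every edge shift and hence sends each oriented cycle to one of the same chirality. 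Consequently the twenty oriented 4-cycles partition into two disjoint $\mathbb{Z}_{10}$-orbits of size ten, one per chirality.

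The main obstacle I anticipate is conceptual rather than computational: one must articulate chirality cleanly as the (well-defined) common sign of the two wormhole shifts along a chosen traversal, and verify both that this common sign is actually well-defined on every $C_a$ (i.e. that the two wormhole edges in a given traversal always shift with the same sign, never opposite) and that it is $\rho$-invariant while being flipped by reversal of the traversal. Once that is pinned down, the remainder — the common-neighbor tally and the explicit check that no power of $\rho$ exchanges the two sign-classes — reduces to elementary modular arithmetic.
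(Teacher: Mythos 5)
Your proof is correct, and it takes a genuinely different route from the paper's. The paper argues by exhibiting two closed operation-words, $P\cdot R_{+7}\cdot L_{-4}\cdot R_{+7}$ and $P\cdot R_{+3}\cdot L_{+4}\cdot R_{+3}$, checking that their index displacements sum to $0 \pmod{10}$, and asserting that they are the only two up to chirality; it never enumerates the $4$-cycles. You instead perform a complete enumeration by common-neighbour counting on the Major side, which yields the sharper and fully rigorous statement: there are exactly ten $4$-cycles $C_a = D_a\hbox{--}M_a\hbox{--}D_{a+3}\hbox{--}M_{a+7}\hbox{--}D_a$, they form a \emph{single} $\mathbb{Z}_{10}$-orbit as unoriented subgraphs, and the split into two classes occurs only after orienting, via the common sign ($+3$ versus $+7\equiv-3$) of the two wormhole shifts. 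This surfaces a point the paper leaves implicit: its violet cycle $\{M_0,D_0,M_7,D_3\}$ and orange cycle $\{D_5,M_5,D_8,M_2\}$ are in fact the same unoriented subgraph up to the rotation $r\mapsto r+5$, so the theorem's ``two classes'' is genuinely a statement about traversal direction, exactly as you formalize it. What the paper's approach buys is brevity and a direct tie to the musical operations $P,L,R$; what yours buys is the exact count and a proof of ``exactly two'' rather than ``at least two''. The only small gap is that you should note explicitly that the offsets $\{0,4,7\}$ are distinct, so the graph is simple and no $2$-cycles (double edges) exist — this is needed to justify calling the $4$-cycles \emph{minimal}, and is the one step the paper's proof handles that yours leaves tacit.
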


\begin{proof}
A cycle must alternate between Major and Minor chords. The minimal length is 4. A 4-cycle must involve at least two wormhole crossings (\texorpdfstring{$L$}{L}) to leave and return to the starting universe.
The two minimal solutions correspond to the two directions of traversing the wormhole:

\begin{enumerate}
    \item \textbf{The Violet Cycle (Right-handed):} $ S = L_{+5} \cdot P \cdot L_{+7} \cdot R_{+2} $.\\
    Tracing the index change starting from $M_0$: 
    $M_0 \xrightarrow{L} m_5 \xrightarrow{P} M_5 \xrightarrow{L} m_{10} \xrightarrow{R} M_0$.
    The total displacement is $+5 + 0 + 5 + 2 = +12 \equiv 0 \pmod{12}$.
    
    \item \textbf{The Orange Cycle (Left-handed):} $ S' = R_{-2} \cdot L_{+7} \cdot P \cdot L_{+5} $. \\
    We trace the cycle $M_4 \to m_2 \to M_9 \to m_9 \to M_4$:
    \begin{itemize}
        \item $M_4 \xrightarrow{R} m_2$ (displacement $-2$)
        \item $m_2 \xrightarrow{L} M_9$ (displacement $+7$)
        \item $M_9 \xrightarrow{P} m_9$ (displacement $0$)
        \item $m_9 \xrightarrow{L} M_4$ (displacement $+7$. Note: $9+7=16 \equiv 4$)
    \end{itemize}
    Total displacement: $-2 + 7 + 0 + 7 = +12 \equiv 0 \pmod{12}$.
\end{enumerate}
These two sequences are non-isomorphic because they traverse the graph with opposite chirality.
\end{proof}

Figure \ref{fig:chiral_cycles_straight_delta3} illustrates these two fundamental cycles embedded within the Levi graph, depicted with straight edges and directional arrows to emphasize the geometric path.

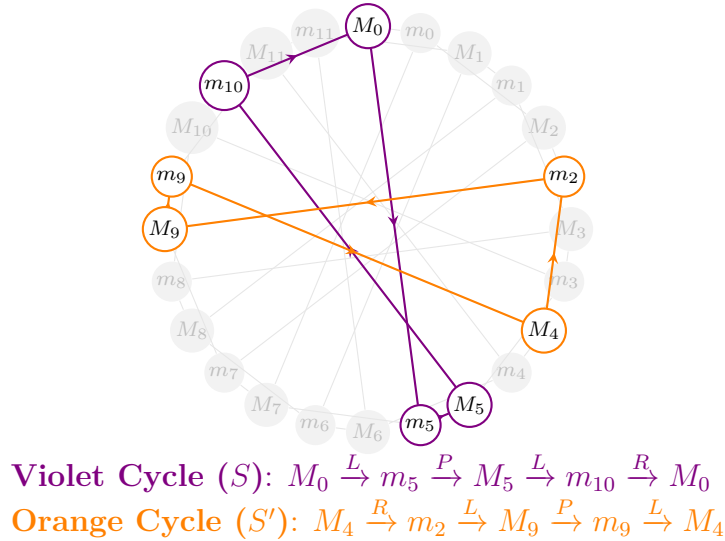
\begin{figure}[H]
    \centering
    \begin{tikzpicture}[
        scale=0.6, 
        >=stealth,
        midarrow/.style={
            postaction={decorate},
            decoration={
                markings,
                mark=at position 0.5 with {\arrow{stealth}}
            }
        },
        mynode/.style={circle, inner sep=1pt, fill=gray!10, text=gray!50, font=\tiny}
    ]
        \def\R{4.5}
        
        % --- NODES & COORDINATES ---
        \foreach \i in {0,1,...,11} {
            \pgfmathsetmacro{\angM}{90 - \i*30}
            \pgfmathsetmacro{\angm}{90 - \i*30 - 15}
            \coordinate (M\i) at (\angM:\R);
            \coordinate (m\i) at (\angm:\R);
            
            \node[mynode] (N_M\i) at (M\i) {$M_{\i}$};
            \node[mynode] (N_m\i) at (m\i) {$m_{\i}$};
        }
        
        % --- BACKGROUND EDGES (Faded) ---
        \foreach \i in {0,1,...,11} {
            \pgfmathsetmacro{\idxR}{int(mod(\i+10,12))}
            \pgfmathsetmacro{\idxL}{int(mod(\i+5,12))}
            
            \draw[gray!20, thin] (N_M\i) -- (N_m\i); % P
            \draw[gray!20, thin] (N_M\i) -- (N_m\idxR); % R
            \draw[gray!20, thin] (N_M\i) -- (N_m\idxL); % L
        }

        % --- CYCLE A: VIOLET (Right-handed) ---
        % M0 -> m5 -> M5 -> m10 -> M0
        \draw[violet, thick, midarrow] (N_M0) -- (N_m5); % L
        \draw[violet, thick, midarrow] (N_m5) -- (N_M5); % P
        \draw[violet, thick, midarrow] (N_M5) -- (N_m10); % L
        \draw[violet, thick, midarrow] (N_m10) -- (N_M0); % R
        
        % Highlight Nodes
        \node[circle, draw=violet, fill=white, inner sep=1pt, font=\tiny, thick] at (M0) {$M_0$};
        \node[circle, draw=violet, fill=white, inner sep=1pt, font=\tiny, thick] at (m5) {$m_5$};
        \node[circle, draw=violet, fill=white, inner sep=1pt, font=\tiny, thick] at (M5) {$M_5$};
        \node[circle, draw=violet, fill=white, inner sep=1pt, font=\tiny, thick] at (m10) {$m_{10}$};

        % --- CYCLE B: ORANGE (Left-handed) ---
        % M4 -> m2 -> M9 -> m9 -> M4
        \draw[orange, thick, midarrow] (N_M4) -- (N_m2); % R
        \draw[orange, thick, midarrow] (N_m2) -- (N_M9); % L
        \draw[orange, thick, midarrow] (N_M9) -- (N_m9); % P
        \draw[orange, thick, midarrow] (N_m9) -- (N_M4); % L

        % Highlight Nodes
        \node[circle, draw=orange, fill=white, inner sep=1pt, font=\tiny, thick] at (M4) {$M_4$};
        \node[circle, draw=orange, fill=white, inner sep=1pt, font=\tiny, thick] at (m2) {$m_2$};
        \node[circle, draw=orange, fill=white, inner sep=1pt, font=\tiny, thick] at (M9) {$M_9$};
        \node[circle, draw=orange, fill=white, inner sep=1pt, font=\tiny, thick] at (m9) {$m_{9}$};

        % Legend
        \node[anchor=north] at (0, -\R-0.2) {
            \begin{tabular}{l}
            \textcolor{violet}{\textbf{Violet Cycle ($S$)}: $M_0 \xrightarrow{L} m_5 \xrightarrow{P} M_5 \xrightarrow{L} m_{10} \xrightarrow{R} M_0$} \\
            \textcolor{orange}{\textbf{Orange Cycle ($S' $)}: $M_4 \xrightarrow{R} m_2 \xrightarrow{L} M_{9} \xrightarrow{P} m_{9} \xrightarrow{L} M_4$}
            \end{tabular}
        };

    \end{tikzpicture}
    \caption{Visualization of the two chiral minimal cycles. Straight edges connect the nodes, with arrows placed at the midpoint of each segment to indicate direction. The Violet cycle starts at \texorpdfstring{$M_0$}{M0}, and the Orange cycle (its chiral opposite) is visualized starting at \texorpdfstring{$M_4$}{M4}.}
    \label{fig:chiral_cycles_straight_delta3}
\end{figure}
\subsection{Algebraic Characterization and Graph Invariants}
\label{sec:invariants_delta303}

To properly classify the functional 12-TET graph within the family of cubic graphs, we must first establish its fundamental algebraic invariants. By inspection of the graph constructed in the previous section, we identify the following properties:

\begin{enumerate}
    \item \textbf{Order and Valence:} The graph has $V=24$ vertices (12 Major, 12 Minor) and is $3$-regular (cubic).
    
    \item \textbf{Bipartiteness:} The graph is bipartite (Levi graph), with the two partitions corresponding to Major chords ($M$) and Minor chords ($m$). Every edge connects a vertex from the Major set to a vertex from the Minor set.
    
    \item \textbf{Girth:} As proven in Theorem 3, the graph has a girth of $g=4$. It contains fundamental 4-cycles formed by the interaction of the three operations, such as:
    \[ M_0 \xrightarrow{L} m_5 \xrightarrow{P} M_5 \xrightarrow{L} m_{10} \xrightarrow{R} M_0 \]
    
    \textit{Remark:} The existence of these 4-cycles implies that in the point-line interpretation, two distinct ``lines'' (e.g., $M_0$ and $M_5$) intersect at two distinct ``points'' (e.g., $m_5$ and $m_{10}$). Consequently, two distinct points can determine more than one line. This violates the axioms of a linear configuration, distinguishing this graph from the rigid geometric structures of the standard system.
    
    \item \textbf{Hamiltonicity:} The graph is Hamiltonian. A complete tour of all 24 chords can be constructed by alternating the Wormhole ($L$) and Relative ($R$) operations, or by alternating Wormhole ($L$) and Parallel ($P$) operations.
\end{enumerate}

\subsubsection{Hamiltonian Connectivity: Two Paths}
The graph is Hamiltonian, meaning there exists a path that visits every chord exactly once before returning to the start. We identify two distinct Hamiltonian cycles generated by alternating operations, each revealing a different aspect of the system's symmetry.

\paragraph{The Perimeter Cycle ($L \cdot P$).}
The first cycle corresponds to the perimeter arrangement shown previously in Figure \ref{fig:levi_circular_delta3}(b). It is generated by alternating the Parallel ($P$) and Wormhole ($L$) transformations:
\[ C_{LP} = (L \cdot P)^{12} \]
Starting from $M_0$, the sequence proceeds: $M_0 \xrightarrow{P} m_0 \xrightarrow{L} M_7 \xrightarrow{P} m_7 \xrightarrow{L} M_2 \dots$. This path traces the local inversion ($P$) followed by a global shift ($L$), naturally unfolding the graph's maximal symmetry (the Cycle of Fifths) into a ring.

\paragraph{The Grand Tour ($L \cdot R$).}
We can construct a second, distinct Hamiltonian cycle by alternating the Wormhole ($L$) and Relative ($R$) operations:
\[ C_{LR} = (L \cdot R)^{12} \]
This path effectively transposes the chord by a Perfect Fourth (interval class 5) at each full step ($M \to M$):
\[ M_0 \xrightarrow{R} m_{10} \xrightarrow{L} M_5 \xrightarrow{R} m_3 \xrightarrow{L} M_{10} \dots \]
While the algebraic formula is as simple as the perimeter cycle, its geometric realization in the standard sequential layout is strikingly different. Instead of stepping between neighbors, the Grand Tour weaves a complex, star-like pattern, jumping across the circle to stitch the entire manifold together.

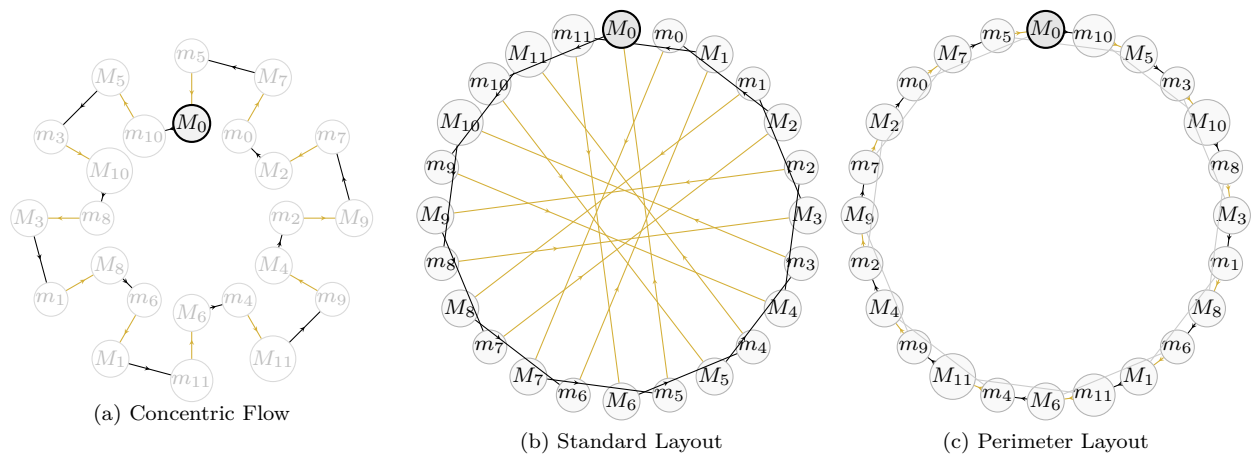
\begin{figure}[H]
    \centering
    % Define custom color
    \definecolor{mygold}{RGB}{212, 175, 55}

    % --- ARROW STYLES ---
    \tikzset{quarterarrow/.style={postaction={decorate}, decoration={markings,mark=at position 0.7 with {\arrow[scale=0.6]{stealth}}}}}
    \tikzset{smallmidarrow/.style={postaction={decorate}, decoration={markings,mark=at position 0.5 with {\arrow[scale=0.6]{stealth}}}}}
    \tikzset{thirdarrow/.style={postaction={decorate}, decoration={markings,mark=at position 0.30 with {\arrow[scale=0.6]{stealth}}}}}
    % Style for smaller nodes
    \tikzset{mynode/.style={circle, inner sep=0pt, minimum size=0.45cm, font=\tiny}}

    % --- LEFT: Concentric Flow ---
    \begin{minipage}{0.32\textwidth}
        \centering
        \begin{tikzpicture}[scale=0.45, >=stealth]
            \def\Rout{4.8} \def\Rin{2.8} \def\step{30}
            % Red Universe (Outer)
            \foreach \i/\lab [count=\xi] in {0/m_{5}, 1/M_{7}, 2/m_{7}, 3/M_{9}, 4/m_{9}, 5/M_{11}, 6/m_{11}, 7/M_{1}, 8/m_{1}, 9/M_{3}, 10/m_{3}, 11/M_{5}} {
                \pgfmathsetmacro{\angle}{90 - \xi*\step + \step}
                \node[mynode, draw=gray!30, fill=white, text=gray!50, alias=RN_\lab] (RedNode\i) at (\angle:\Rout) {$\lab$};
            }
            % Blue Universe (Inner)
            \foreach \j/\lab [count=\xj] in {0/M_{0}, 1/m_{0}, 2/M_{2}, 3/m_{2}, 4/M_{4}, 5/m_{4}, 6/M_{6}, 7/m_{6}, 8/M_{8}, 9/m_{8}, 10/M_{10}, 11/m_{10}} {
                \pgfmathsetmacro{\angle}{90 - \xj*\step + \step}
                \node[mynode, draw=gray!30, fill=white, text=gray!50, alias=BN_\lab] (BlueNode\j) at (\angle:\Rin) {$\lab$};
            }
            
            % FULL PATH LOOP
            % Sequence: M0(B)->m10(B)->M5(R)->m3(R)->M10(B)->m8(B)->M3(R)->m1(R)->M8(B)->m6(B)->M1(R)->m11(R)->M6(B)->m4(B)->M11(R)->m9(R)->M4(B)->m2(B)->M9(R)->m7(R)->M2(B)->m0(B)->M7(R)->m5(R)->(Back to M0)
            
            \foreach \start/\end/\type in {
                BN_M_{0}/BN_m_{10}/R,   BN_m_{10}/RN_M_{5}/L, 
                RN_M_{5}/RN_m_{3}/R,    RN_m_{3}/BN_M_{10}/L,
                BN_M_{10}/BN_m_{8}/R,   BN_m_{8}/RN_M_{3}/L,
                RN_M_{3}/RN_m_{1}/R,    RN_m_{1}/BN_M_{8}/L,
                BN_M_{8}/BN_m_{6}/R,    BN_m_{6}/RN_M_{1}/L,
                RN_M_{1}/RN_m_{11}/R,   RN_m_{11}/BN_M_{6}/L,
                BN_M_{6}/BN_m_{4}/R,    BN_m_{4}/RN_M_{11}/L,
                RN_M_{11}/RN_m_{9}/R,   RN_m_{9}/BN_M_{4}/L,
                BN_M_{4}/BN_m_{2}/R,    BN_m_{2}/RN_M_{9}/L,
                RN_M_{9}/RN_m_{7}/R,    RN_m_{7}/BN_M_{2}/L,
                BN_M_{2}/BN_m_{0}/R,    BN_m_{0}/RN_M_{7}/L,
                RN_M_{7}/RN_m_{5}/R,    RN_m_{5}/BN_M_{0}/L
            } {
                \ifx\type\undefined\else
                    \if\type R
                        \draw[black, thin, smallmidarrow] (\start) -- (\end);
                    \else
                        \draw[thin, color=mygold, quarterarrow] (\start) -- (\end);
                    \fi
                \fi
            }
            
            % Highlight Start
            \node[mynode, draw=black, fill=gray!10, thick] at (90:\Rin) {$M_0$};
            \node[align=center, font=\tiny] at (0, -\Rout-1) {(a) Concentric Flow};
        \end{tikzpicture}
    \end{minipage}
    \hfill
    % --- MIDDLE: Standard Layout ---
    \begin{minipage}{0.32\textwidth}
        \centering
        \begin{tikzpicture}[scale=0.55, >=stealth]
            \def\R{4.5} \def\step{30}
            \foreach \i in {0,...,11} {
                \pgfmathsetmacro{\angM}{90 - \i*\step}
                \pgfmathsetmacro{\angm}{90 - \i*\step - 15}
                \node[mynode, draw=gray!60, fill=gray!5] (M_\i) at (\angM:\R) {$M_{\i}$};
                \node[mynode, draw=gray!60, fill=gray!5] (m_\i) at (\angm:\R) {$m_{\i}$};
            }
            
            % FULL PATH LOOP
            \foreach \u/\v/\type in {
                M_0/m_10/R, m_10/M_5/L, 
                M_5/m_3/R,  m_3/M_10/L,
                M_10/m_8/R, m_8/M_3/L, 
                M_3/m_1/R,  m_1/M_8/L,
                M_8/m_6/R,  m_6/M_1/L, 
                M_1/m_11/R, m_11/M_6/L,
                M_6/m_4/R,  m_4/M_11/L, 
                M_11/m_9/R, m_9/M_4/L,
                M_4/m_2/R,  m_2/M_9/L, 
                M_9/m_7/R,  m_7/M_2/L,
                M_2/m_0/R,  m_0/M_7/L, 
                M_7/m_5/R,  m_5/M_0/L
            } {
                \if\type R
                    \draw[black, thin, thirdarrow] (\u) -- (\v);
                \else
                    \draw[thin, color=mygold, thirdarrow] (\u) -- (\v);
                \fi
            }
            
            \node[mynode, draw=black, fill=gray!20, thick] at (90:\R) {$M_0$};
            \node[align=center, font=\tiny] at (0, -\R-1) {(b) Standard Layout};
        \end{tikzpicture}
    \end{minipage}
    \hfill
    % --- RIGHT: Perimeter Layout of Grand Tour ---
    \begin{minipage}{0.32\textwidth}
        \centering
        \begin{tikzpicture}[scale=0.55, >=stealth]
            \def\R{4.5}
            
            % Explicit list to avoid Brace/Name collisions in TikZ
            \foreach \id/\lab [count=\i from 0] in {
                M_0/M_{0}, m_10/m_{10}, M_5/M_{5}, m_3/m_{3}, 
                M_10/M_{10}, m_8/m_{8}, M_3/M_{3}, m_1/m_{1}, 
                M_8/M_{8}, m_6/m_{6}, M_1/M_{1}, m_11/m_{11}, 
                M_6/M_{6}, m_4/m_{4}, M_11/M_{11}, m_9/m_{9}, 
                M_4/M_{4}, m_2/m_{2}, M_9/M_{9}, m_7/m_{7}, 
                M_2/M_{2}, m_0/m_{0}, M_7/M_{7}, m_5/m_{5}} 
            {
                \pgfmathsetmacro{\angle}{90 - \i * (360/24)}
                \node[mynode, draw=gray!60, fill=gray!5, alias=P_\id] (Node\i) at (\angle:\R) {$\lab$};
            }
            
            % 1. P-connections (Internal chords)
            \foreach \k in {0,...,11} {
                \draw[gray!40, thin] (P_M_\k) -- (P_m_\k);
            }

            % 2. Sequential Hamiltonian connections
            \foreach \i in {0,...,23} {
                \pgfmathsetmacro{\next}{int(mod(\i+1,24))}
                \pgfmathsetmacro{\isR}{int(mod(\i,2))} 
                \ifnum\isR=0 
                    \draw[black, thin, smallmidarrow] (Node\i) -- (Node\next);
                \else 
                    \draw[thin, color=mygold, smallmidarrow] (Node\i) -- (Node\next);
                \fi
            }
            \node[mynode, draw=black, fill=gray!20, thick] at (90:\R) {$M_0$};
            \node[align=center, font=\tiny] at (0, -\R-1) {(c) Perimeter Layout};
        \end{tikzpicture}
    \end{minipage}

    \caption{The ``Grand Tour'' Hamiltonian Cycle $(L \cdot R)^{12}$. (a) Concentric flow zig-zagging between Blue and Red rings. (b) Star-like pattern in standard layout. (c) Perimeter layout: the nodes are re-indexed such that the $(L \cdot R)^{12}$ Hamiltonian cycle forms the outer boundary. The remaining $P$ operations appear as internal chords of constant length, forming a spirograph pattern.}
    \label{fig:hamiltonian_combined_delta3}
\end{figure}

\section{The ``Far Narrow Thirds'' System: \texorpdfstring{$\Delta=1, (t,s)=(10,9)$}{Delta=1, (t,s)=(10,9)}}
\label{sec:far_thirds}

We now turn our attention to the second solution of the Diophantine equation $t+s=7$ for the case $\Delta=1$. This is the partition $(t,s)=(10,9)$, which we designate as the \textbf{"Far Narrow Thirds"} system.

Here, the generator $q=7$ is split into a "Major Third" of 10 steps (a Minor Seventh in standard terminology) and a "Minor Third" of 9 steps (a Major Sixth). While the difference $\Delta = t-s = 1$ is identical to the standard Western system, the resulting topology is dramatically different due to the magnitude of the intervals.

\subsection{Chord Definitions}
The Major chord $M_k$ and Minor chord $m_k$ are defined by the standard formulas:
\begin{align}
    M_k &= \{k, k+t, k+q\} = \{k, k+10, k+7\}, \\
    m_k &= \{k, k+s, k+q\} = \{k, k+9, k+7\}.
\end{align}
Unlike the $\Delta=3$ case, there is no modal degeneracy here. The Major chord $M_0 = \{0, 10, 7\}$ and the Minor chord $m_0 = \{0, 9, 7\}$ are distinct pitch sets that cannot be mapped to one another by translation; they are mirror images (chiral copies) of one another.

\subsection{The Tonnetz and Voice Leading}
The topology of the system is determined by the common-tone connections between chords. We designate these transformations using the standard $P, L, R$ nomenclature. 

For the system $(t,s)=(10,9)$, the transformation rules are derived from the shared intervals $q$ (for $P$), $s$ (for $L$), and $t$ (for $R$). Note the specific indices for $L$ and $R$ shifts in this system:
\begin{align*}
    P(M_k) &= m_k & P(m_k) &= M_k \\
    L(M_k) &= m_{k+10} & L(m_k) &= M_{k+2} \\
    R(M_k) &= m_{k+3} & R(m_k) &= M_{k+9}
\end{align*}

The complete set of voice-leading connections is provided in Tables \ref{tab:far_thirds_major} and \ref{tab:far_thirds_minor}.

\begin{table}[H]
\centering
\scriptsize
\renewcommand{\arraystretch}{1.2}
\begin{minipage}{0.48\textwidth}
\centering
\begin{tabular}{|c||c|c|c|}
\hline
\textbf{Source} ($M_k$) & \textbf{P} ($m_k$) & \textbf{L} ($m_{k+10}$) & \textbf{R} ($m_{k+3}$) \\
\hline
$M_0$ & $m_0$ & $m_{10}$ & $m_3$ \\
$M_1$ & $m_1$ & $m_{11}$ & $m_4$ \\
$M_2$ & $m_2$ & $m_{0}$ & $m_5$ \\
$M_3$ & $m_3$ & $m_{1}$ & $m_6$ \\
$M_4$ & $m_4$ & $m_{2}$ & $m_7$ \\
$M_5$ & $m_5$ & $m_{3}$ & $m_8$ \\
$M_6$ & $m_6$ & $m_{4}$ & $m_9$ \\
$M_7$ & $m_7$ & $m_{5}$ & $m_{10}$ \\
$M_8$ & $m_8$ & $m_{6}$ & $m_{11}$ \\
$M_9$ & $m_9$ & $m_{7}$ & $m_{0}$ \\
$M_{10}$ & $m_{10}$ & $m_{8}$ & $m_{1}$ \\
$M_{11}$ & $m_{11}$ & $m_{9}$ & $m_{2}$ \\
\hline
\end{tabular}
\caption{Voice Leading: Major Chords $M_k$.}
\label{tab:far_thirds_major}
\end{minipage}
\hfill
\begin{minipage}{0.48\textwidth}
\centering
\begin{tabular}{|c||c|c|c|}
\hline
\textbf{Source} ($m_k$) & \textbf{P} ($M_k$) & \textbf{L} ($M_{k+2}$) & \textbf{R} ($M_{k+9}$) \\
\hline
$m_0$ & $M_0$ & $M_2$ & $M_9$ \\
$m_1$ & $M_1$ & $M_3$ & $M_{10}$ \\
$m_2$ & $M_2$ & $M_4$ & $M_{11}$ \\
$m_3$ & $M_3$ & $M_5$ & $M_{0}$ \\
$m_4$ & $M_4$ & $M_6$ & $M_{1}$ \\
$m_5$ & $M_5$ & $M_7$ & $M_{2}$ \\
$m_6$ & $M_6$ & $M_8$ & $M_{3}$ \\
$m_7$ & $M_7$ & $M_9$ & $M_{4}$ \\
$m_8$ & $M_8$ & $M_{10}$ & $M_{5}$ \\
$m_9$ & $M_9$ & $M_{11}$ & $M_{6}$ \\
$m_{10}$ & $M_{10}$ & $M_{0}$ & $M_{7}$ \\
$m_{11}$ & $M_{11}$ & $M_{1}$ & $M_{8}$ \\
\hline
\end{tabular}
\caption{Voice Leading: Minor Chords $m_k$.}
\label{tab:far_thirds_minor}
\end{minipage}
\end{table}

\subsection{Topological Properties}
The Levi graph of this system is a connected cubic graph on 24 vertices. Its structure is defined by the interaction of the three fundamental involutions.

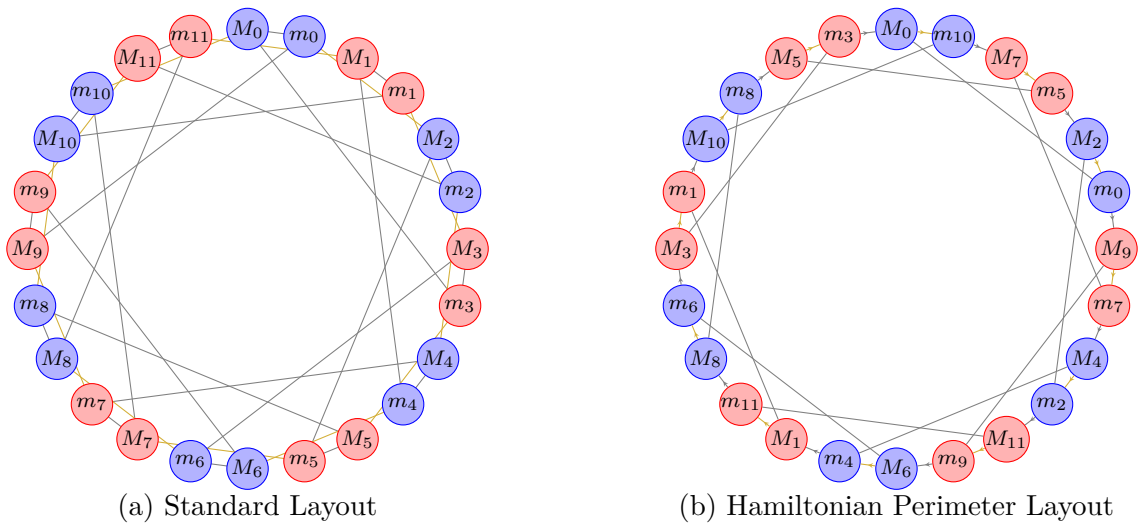
\begin{figure}[H]
    \centering
    % Define style for smaller nodes globally for this figure
    \tikzset{
        mynode/.style={circle, inner sep=0pt, minimum size=0.55cm, font=\tiny}
    }
    \tikzset{smallmidarrow/.style={postaction={decorate}, decoration={markings,mark=at position 0.5 with {\arrow[scale=0.6]{stealth}}}}}

    % --- LEFT FIGURE: STANDARD SEQUENTIAL ---
    \begin{minipage}{0.48\textwidth}
        \centering
        \begin{tikzpicture}[scale=0.65]
            \def\R{4.5}
            \definecolor{mygold}{RGB}{212, 175, 55}

            % 1. Coordinates: Standard Sequence
            \foreach \i in {0,1,...,11} {
                \pgfmathsetmacro{\angM}{90 - \i*30}
                \pgfmathsetmacro{\angm}{90 - \i*30 - 15}
                \coordinate (cM\i) at (\angM:\R);
                \coordinate (cm\i) at (\angm:\R);
            }

            % 2. Edges for (10,9) System
            \foreach \i in {0,1,...,11} {
                % P edges: M_i -- m_i
                \draw[gray, thin] (cM\i) -- (cm\i);
                % R edges: M_i -- m_{i+3}
                \pgfmathsetmacro{\idxR}{int(mod(\i+3,12))}
                \draw[gray, thin] (cM\i) -- (cm\idxR);
                % L edges: M_i -- m_{i+10}
                \pgfmathsetmacro{\idxL}{int(mod(\i+10,12))}
                \draw[thin, color=mygold] (cM\i) -- (cm\idxL);
            }

            % 3. Nodes (Small)
            \foreach \i in {0,1,...,11} {
                \pgfmathsetmacro{\isOdd}{int(mod(\i,2))}
                \ifnum\isOdd=0
                    \node[mynode, draw=blue, fill=blue!30] at (cM\i) {$M_{\i}$};
                    \node[mynode, draw=blue, fill=blue!30] at (cm\i) {$m_{\i}$};
                \else
                    \node[mynode, draw=red, fill=red!30] at (cM\i) {$M_{\i}$};
                    \node[mynode, draw=red, fill=red!30] at (cm\i) {$m_{\i}$};
                \fi
            }
            \node at (0, -\R-0.8) {\small (a) Standard Layout};
        \end{tikzpicture}
    \end{minipage}%
    \hfill
    % --- RIGHT FIGURE: HAMILTONIAN PERIMETER LAYOUT ---
    \begin{minipage}{0.48\textwidth}
        \centering
        \begin{tikzpicture}[scale=0.65, >=stealth]
            \def\R{4.5}
            \definecolor{mygold}{RGB}{212, 175, 55}
            
            % Explicit list for the Hamiltonian Cycle (Circle of Fifths order, interleaved L/R)
            % Format: Index/ID/Label. Used Index to determine Blue (even) vs Red (odd) color.
            \foreach \k/\id/\lab [count=\i from 0] in {
                0/M_0/M_{0}, 10/m_10/m_{10}, 7/M_7/M_{7}, 5/m_5/m_{5}, 
                2/M_2/M_{2}, 0/m_0/m_{0}, 9/M_9/M_{9}, 7/m_7/m_{7}, 
                4/M_4/M_{4}, 2/m_2/m_{2}, 11/M_11/M_{11}, 9/m_9/m_{9}, 
                6/M_6/M_{6}, 4/m_4/m_{4}, 1/M_1/M_{1}, 11/m_11/m_{11}, 
                8/M_8/M_{8}, 6/m_6/m_{6}, 3/M_3/M_{3}, 1/m_1/m_{1}, 
                10/M_10/M_{10}, 8/m_8/m_{8}, 5/M_5/M_{5}, 3/m_3/m_{3}} 
            {
                \pgfmathsetmacro{\angle}{90 - \i * (360/24)}
                
                % Determine Color based on original index \k (Mirroring Left Figure)
                \pgfmathsetmacro{\isOdd}{int(mod(\k,2))}
                \ifnum\isOdd=0
                    \node[mynode, draw=blue, fill=blue!30, alias=P_\id] (Node\i) at (\angle:\R) {$\lab$};
                \else
                    \node[mynode, draw=red, fill=red!30, alias=P_\id] (Node\i) at (\angle:\R) {$\lab$};
                \fi
            }
            
            % 1. P-connections (Internal chords) -> Gray (Matches Left)
            \foreach \k in {0,...,11} {
                \draw[gray, thin] (P_M_\k) -- (P_m_\k);
            }

            % 2. Sequential Hamiltonian connections (Perimeter)
            \foreach \i in {0,...,23} {
                \pgfmathsetmacro{\next}{int(mod(\i+1,24))}
                \pgfmathsetmacro{\isR}{int(mod(\i,2))} 
                \ifnum\isR=0 
                    % Even steps: M -> m (L transformation) -> Gold (Matches Left L-edges)
                    \draw[color=mygold, thin, smallmidarrow] (Node\i) -- (Node\next);
                \else 
                    % Odd steps: m -> M (R transformation) -> Gray (Matches Left R-edges)
                    \draw[gray, thin, smallmidarrow] (Node\i) -- (Node\next);
                \fi
            }
            
            \node at (0, -\R-0.8) {\small (b) Hamiltonian Perimeter Layout};
        \end{tikzpicture}
    \end{minipage}
    
    \caption{The Connected Levi Graph for $(t,s)=(10,9)$. (a) Standard layout with radial pairing. (b) Hamiltonian perimeter layout with mirrored coloring. Even nodes are Blue, Odd nodes are Red. The perimeter alternates between $L$-edges (Gold) and $R$-edges (Gray), while the internal chords represent $P$-connections (Gray).}
    \label{fig:levi_circular_delta10_9}
\end{figure}

\subsubsection{Minimal Circles and Girth}
An investigation of the cyclic paths in this graph reveals the fundamental "shapes" of the harmonic space.

\begin{itemize}
    \item \textbf{The $PL$-cycle:} The sequence $M_k \xrightarrow{P} m_k \xrightarrow{L} M_{k+2}$ induces a shift of $+2$. Since $\gcd(2, 12)=2$, this generates two disjoint cycles of length 12 (dodecagons).
    \item \textbf{The $PR$-cycle:} The sequence $M_k \xrightarrow{P} m_k \xrightarrow{R} M_{k+9}$ induces a shift of $+9 \equiv -3$. Since $12/\gcd(3,12)=4$, this generates three disjoint cycles of length 8 (octagons).
    \item \textbf{The Minimal Cycle:} The most compact loop in the system is neither of these. It is a "twisted" ternary cycle formed by the sequence $(P \cdot L \cdot R)^2$.
\end{itemize}

\begin{theorem}[Girth of the Far Narrow Thirds System]
The Levi graph of the $(10,9)$ system has girth 6. It is tiled by minimal hexagonal cycles generated by the composite transformation sequence $P \to R \to L \to P \to R \to L$.
\end{theorem}

\begin{proof}
Let us trace the path starting from an arbitrary Major chord $M_k$ under the sequence of transformations $S = P \circ R \circ L \circ P \circ R \circ L$. Note that we apply the operators from left to right as voice-leading steps:
\begin{enumerate}
    \item $M_k \xrightarrow{P} m_k$ (Shift: $0$)
    \item $m_k \xrightarrow{R} M_{k+9}$ (Shift: $+9$)
    \item $M_{k+9} \xrightarrow{L} m_{(k+9)+10} = m_{k+7}$ (Shift: $+10$)
    \item $m_{k+7} \xrightarrow{P} M_{k+7}$ (Shift: $0$)
    \item $M_{k+7} \xrightarrow{R} m_{(k+7)+3} = m_{k+10}$ (Shift: $+3$)
    \item $m_{k+10} \xrightarrow{L} M_{(k+10)+2} = M_{k+12} \equiv M_k$ (Shift: $+2$)
\end{enumerate}
The total index shift is $0 + 9 + 10 + 0 + 3 + 2 = 24 \equiv 0 \pmod{12}$. The path returns to the starting chord $M_k$ after exactly 6 distinct edges. Since the graph is bipartite, odd cycles are impossible, and since no 4-cycles (digons) exist (as shown by the larger lengths of binary cycles $PL$ and $PR$), the minimal cycle length is 6.
\end{proof}

Since the girth is 6, the graph contains no 4-cycles. In the language of incidence geometry, this means there are no two points connected by more than one line. Thus, the $(10,9)$ harmonic system forms a valid \textbf{geometric configuration of 12 points and 12 lines} ($12_3$).

\begin{figure}[H]
    \centering
    \begin{tikzpicture}[
        scale=0.6, 
        >=stealth,
        midarrow/.style={
            postaction={decorate},
            decoration={
                markings,
                mark=at position 0.5 with {\arrow{stealth}}
            }
        },
        mynode/.style={circle, inner sep=1pt, fill=gray!10, text=gray!50, font=\tiny}
    ]
        \def\R{4.5}
        
        % --- NODES & COORDINATES ---
        \foreach \i in {0,1,...,11} {
            \pgfmathsetmacro{\angM}{90 - \i*30}
            \pgfmathsetmacro{\angm}{90 - \i*30 - 15}
            \coordinate (M\i) at (\angM:\R);
            \coordinate (m\i) at (\angm:\R);
            
            \node[mynode] (N_M\i) at (M\i) {$M_{\i}$};
            \node[mynode] (N_m\i) at (m\i) {$m_{\i}$};
        }
        
        % --- BACKGROUND EDGES (Faded) ---
        \foreach \i in {0,1,...,11} {
            \pgfmathsetmacro{\idxL}{int(mod(\i+10,12))}
            \pgfmathsetmacro{\idxR}{int(mod(\i+3,12))}
            \draw[gray!20, thin] (N_M\i) -- (N_m\i); % P
            \draw[gray!20, thin] (N_M\i) -- (N_m\idxR); % R
            \draw[gray!20, thin] (N_M\i) -- (N_m\idxL); % L
        }

        % --- CYCLE A: VIOLET (Hexagon 1 starting at m7) ---
        % User requested sequence: m7 -> M7 -> m5 -> M2 -> m2 -> M4 -> m7
        % P(m7)=M7, L(M7)=m5, R(m5)=M2, P(M2)=m2, L(m2)=M4, R(M4)=m7
        \draw[violet, thick, midarrow] (N_m7) -- (N_M7); % P
        \draw[violet, thick, midarrow] (N_M7) -- (N_m5); % L
        \draw[violet, thick, midarrow] (N_m5) -- (N_M2); % R
        \draw[violet, thick, midarrow] (N_M2) -- (N_m2); % P
        \draw[violet, thick, midarrow] (N_m2) -- (N_M4); % L
        \draw[violet, thick, midarrow] (N_M4) -- (N_m7); % R
        
        % Highlight Nodes for Cycle A with Correct Math Subscripts
        \foreach \n/\lab in {m7/m_{7}, M7/M_{7}, m5/m_{5}, M2/M_{2}, m2/m_{2}, M4/M_{4}} {
             \node[circle, draw=violet, fill=white, inner sep=1pt, font=\tiny, thick] at (N_\n) {$\lab$};
        }

        % --- CYCLE B: ORANGE (Disjoint Hexagon 2 starting at M1) ---
        % Sequence: M1 -> P -> m1 -> R -> M10 -> L -> m8 -> P -> M8 -> R -> m11 -> L -> M1
        \draw[orange, thick, midarrow] (N_M1) -- (N_m1); % P
        \draw[orange, thick, midarrow] (N_m1) -- (N_M10); % R
        \draw[orange, thick, midarrow] (N_M10) -- (N_m8); % L
        \draw[orange, thick, midarrow] (N_m8) -- (N_M8); % P
        \draw[orange, thick, midarrow] (N_M8) -- (N_m11); % R
        \draw[orange, thick, midarrow] (N_m11) -- (N_M1); % L

        % Highlight Nodes for Cycle B with Correct Math Subscripts
        \foreach \n/\lab in {M1/M_{1}, m1/m_{1}, M10/M_{10}, m8/m_{8}, M8/M_{8}, m11/m_{11}} {
             \node[circle, draw=orange, fill=white, inner sep=1pt, font=\tiny, thick] at (N_\n) {$\lab$};
        }

        % Legend
        \node[anchor=north] at (0, -\R-0.2) {
            \begin{tabular}{l}
            \textcolor{violet}{\textbf{Violet Hexagon}: The minimal cycle starting at $m_7$} \\
            \textcolor{orange}{\textbf{Orange Hexagon}: The disjoint minimal cycle starting at $M_1$}
            \end{tabular}
        };

    \end{tikzpicture}
    \caption{Visualization of two disjoint minimal cycles in the Far Narrow Thirds system. The true girth of the system is 6, realized by "twisted hexagons" involving all three operations $P, R, L$.}
    \label{fig:minimal_hexagons_delta10_9}
\end{figure}

\subsection{Algebraic characterization and graph invariants}
To properly classify the functional graph of the "Far Narrow Thirds" system within the family of cubic graphs, we must first establish its fundamental algebraic invariants. By inspection of the graph constructed in the previous section, we identify the following properties:

\begin{enumerate}
    \item \textbf{Order and Valence:} The graph has $V=24$ vertices (12 Major, 12 Minor) and is $3$-regular (cubic).
    
    \item \textbf{Bipartiteness:} The graph is bipartite (Levi graph), with the two partitions corresponding to Major chords ($M$) and Minor chords ($m$). Every edge connects a vertex from the Major set to a vertex from the Minor set.
    
    \item \textbf{Girth:} As proven in Theorem 3, the graph has a girth of $g=6$. It contains fundamental 6-cycles formed by the ternary interaction of operations ($P, L, R$). The absence of 4-cycles implies that this graph forms a valid $12_3$ geometric configuration.
    
    \item \textbf{Hamiltonicity:} The graph is Hamiltonian. A complete tour of all 24 chords can be constructed by alternating the Wormhole ($L$) and Relative ($R$) operations.
\end{enumerate}

\subsubsection{Hamiltonian Connectivity: The Two Paths}
While the local binary cycles ($PL$ and $PR$) fracture the universe into disjoint rings (dodecagons and octagons), the graph possesses a global connectivity generated by the interaction of $L$ and $R$. We identify two distinct directed Hamiltonian paths that traverse the entire harmonic manifold.

\paragraph{The Grand Tour (Circle of Fifths).}
The primary Hamiltonian cycle is generated by alternating the Wormhole ($L$) and Relative ($R$) transformations:
\[ C_{LR} = (L \cdot R)^{12} \]
Starting from $M_0$, the sequence proceeds: $M_0 \xrightarrow{L} m_{10} \xrightarrow{R} M_7 \xrightarrow{L} m_5 \xrightarrow{R} M_2 \dots$.
This path induces a root shift of $+7$ at each full step ($M \to M$). Geometrically, it stitches the disconnected local neighborhoods together by jumping strictly along the interval of the Perfect Fifth.

\paragraph{The Reverse Tour (Circle of Fourths).}
The chiral opposite of the Grand Tour is generated by reversing the alternation order (or traversing the cycle backwards):
\[ C_{RL} = (R \cdot L)^{12} \]
This path induces a root shift of $-7 \equiv +5$ (the Perfect Fourth). While it traces the same set of edges as the Grand Tour, it represents the distinct harmonic experience of "descending" through the cycle of fifths.

\textit{Remark on the Perimeter Cycle:} It is important to note a key distinction from the "Wide Thirds" ($\Delta=3$) system. In that system, the Perimeter Cycle ($L \cdot P$) is Hamiltonian. In the present $(10,9)$ system, the $(L \cdot P)$ sequence induces a root shift of $+2$, which (since $\gcd(2,12)=2$) decomposes the graph into two disjoint, non-communicating dodecagonal rings. Thus, in the "Far Narrow Thirds" universe, only the $L/R$ interaction provides a true Hamiltonian wormhole.

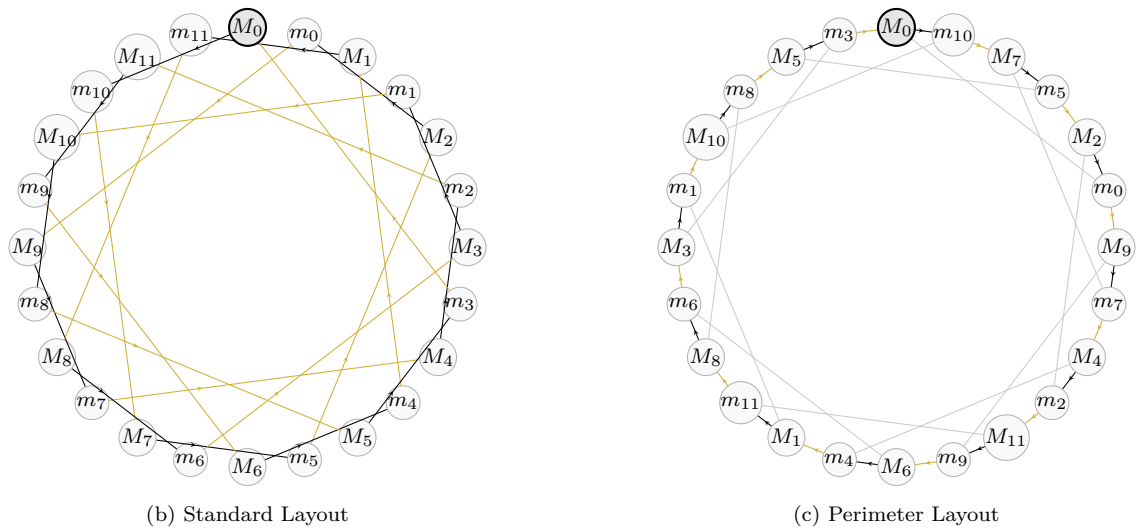
\begin{figure}[H]
    \centering
    % Define custom color
    \definecolor{mygold}{RGB}{212, 175, 55}

    % --- ARROW STYLES ---
    \tikzset{quarterarrow/.style={postaction={decorate}, decoration={markings,mark=at position 0.7 with {\arrow[scale=0.6]{stealth}}}}}
    \tikzset{smallmidarrow/.style={postaction={decorate}, decoration={markings,mark=at position 0.5 with {\arrow[scale=0.6]{stealth}}}}}
    \tikzset{thirdarrow/.style={postaction={decorate}, decoration={markings,mark=at position 0.30 with {\arrow[scale=0.6]{stealth}}}}}
    % Style for smaller nodes
    \tikzset{mynode/.style={circle, inner sep=0pt, minimum size=0.45cm, font=\tiny}}
    \begin{minipage}{0.48\textwidth}
        \centering
        \begin{tikzpicture}[scale=0.65, >=stealth]
            \def\R{4.5} \def\step{30}
            \foreach \i in {0,...,11} {
                \pgfmathsetmacro{\angM}{90 - \i*\step}
                \pgfmathsetmacro{\angm}{90 - \i*\step - 15}
                \node[mynode, draw=gray!60, fill=gray!5] (M_\i) at (\angM:\R) {$M_{\i}$};
                \node[mynode, draw=gray!60, fill=gray!5] (m_\i) at (\angm:\R) {$m_{\i}$};
            }
            
            % FULL PATH LOOP LR-Cycle: M_k -> L -> m_{k+10} -> R -> M_{k+7}
            % Sequence M: 0, 7, 2, 9, 4, 11, 6, 1, 8, 3, 10, 5
            \foreach \u/\v/\type in {
                M_0/m_10/L, m_10/M_7/R, 
                M_7/m_5/L,  m_5/M_2/R, 
                M_2/m_0/L,  m_0/M_9/R, 
                M_9/m_7/L,  m_7/M_4/R, 
                M_4/m_2/L,  m_2/M_11/R, 
                M_11/m_9/L, m_9/M_6/R, 
                M_6/m_4/L,  m_4/M_1/R, 
                M_1/m_11/L, m_11/M_8/R,
                M_8/m_6/L,  m_6/M_3/R,
                M_3/m_1/L,  m_1/M_10/R,
                M_10/m_8/L, m_8/M_5/R,
                M_5/m_3/L,  m_3/M_0/R
            } {
                \if\type L
                    \draw[black, thin, thirdarrow] (\u) -- (\v);
                \else
                    \draw[thin, color=mygold, thirdarrow] (\u) -- (\v);
                \fi
            }
            
            \node[mynode, draw=black, fill=gray!20, thick] at (90:\R) {$M_0$};
            \node[align=center, font=\tiny] at (0, -\R-1) {(b) Standard Layout};
        \end{tikzpicture}
    \end{minipage}
    \hfill
    % --- RIGHT: Perimeter Layout of Grand Tour ---
    \begin{minipage}{0.48\textwidth}
        \centering
        \begin{tikzpicture}[scale=0.65, >=stealth]
            \def\R{4.5}
            
            % Sequence M: 0, 7, 2, 9, 4, 11, 6, 1, 8, 3, 10, 5 (Circle of Fifths)
            \foreach \id/\lab [count=\i from 0] in {
                M_0/M_{0}, m_10/m_{10}, M_7/M_{7}, m_5/m_{5}, 
                M_2/M_{2}, m_0/m_{0}, M_9/M_{9}, m_7/m_{7}, 
                M_4/M_{4}, m_2/m_{2}, M_11/M_{11}, m_9/m_{9}, 
                M_6/M_{6}, m_4/m_{4}, M_1/M_{1}, m_11/m_{11}, 
                M_8/M_{8}, m_6/m_{6}, M_3/M_{3}, m_1/m_{1}, 
                M_10/M_{10}, m_8/m_{8}, M_5/M_{5}, m_3/m_{3}} 
            {
                \pgfmathsetmacro{\angle}{90 - \i * (360/24)}
                \node[mynode, draw=gray!60, fill=gray!5, alias=P_\id] (Node\i) at (\angle:\R) {$\lab$};
            }
            
            % 1. P-connections (Internal chords)
            \foreach \k in {0,...,11} {
                \draw[gray!40, thin] (P_M_\k) -- (P_m_\k);
            }

            % 2. Sequential Hamiltonian connections
            \foreach \i in {0,...,23} {
                \pgfmathsetmacro{\next}{int(mod(\i+1,24))}
                \pgfmathsetmacro{\isR}{int(mod(\i,2))} 
                \ifnum\isR=0 
                    \draw[black, thin, smallmidarrow] (Node\i) -- (Node\next);
                \else 
                    \draw[thin, color=mygold, smallmidarrow] (Node\i) -- (Node\next);
                \fi
            }
            \node[mynode, draw=black, fill=gray!20, thick] at (90:\R) {$M_0$};
            \node[align=center, font=\tiny] at (0, -\R-1) {(c) Perimeter Layout};
        \end{tikzpicture}
    \end{minipage}

    \caption{The ``Grand Tour'' Hamiltonian Cycle $(L \cdot R)^{12}$ for the Far Narrow Thirds system. (b) In the standard layout, the cycle forms a spirograph pattern. (c) In the perimeter layout (ordered by the Circle of Fifths), the Hamiltonian path forms the outer boundary, while the $P$ relations form internal chords.}
    \label{fig:hamiltonian_combined_delta10_9}
\end{figure}

\section{The ``Far Wide Thirds'' System: \texorpdfstring{$\Delta=3, (t,s)=(11,8)$}{Delta=3, (t,s)=(11,8)}}
\label{sec:far_wide}

We conclude our exploration of the $\Delta=3$ family with the partition $(t,s)=(11,8)$, which we designate as the \textbf{"Far Wide Thirds"} system.

Here, the generator $q=7$ is split into a "Major Third" of 11 steps (a Major Seventh) and a "Minor Third" of 8 steps (a Minor Sixth). Although this system shares the parameter $\Delta=3$ with the "Wide Thirds" system of Section 5, it exhibits none of that system's pathologies. The large interval sizes prevent the overlap that caused modal degeneracy, resulting in a fully connected, non-degenerate harmonic universe.

\subsection{Chord Definitions}
The Major chord $M_k$ and Minor chord $m_k$ are defined by the standard formulas:
\begin{align}
    M_k &= \{k, k+t, k+q\} = \{k, k+11, k+7\}, \\
    m_k &= \{k, k+s, k+q\} = \{k, k+8, k+7\}.
\end{align}
Unlike the $(5,2)$ case, there is no modal degeneracy here. The Major and Minor chords are distinct pitch sets and form chiral pairs. The complete list of the 24 triads in this system is provided in Tables \ref{tab:far_wide_major_def} and \ref{tab:far_wide_minor_def}.

\begin{table}[H]
\centering
\scriptsize
\renewcommand{\arraystretch}{1.0}
\begin{minipage}{0.48\textwidth}
\centering
\begin{tabular}{|c|c|}
\hline
\textbf{Root} ($k$) & \textbf{Major Chord} $M_k = \{k, k+11, k+7\}$ \\
\hline
0 & \{0, 11, 7\} \\
1 & \{1, 0, 8\} \\
2 & \{2, 1, 9\} \\
3 & \{3, 2, 10\} \\
4 & \{4, 3, 11\} \\
5 & \{5, 4, 0\} \\
6 & \{6, 5, 1\} \\
7 & \{7, 6, 2\} \\
8 & \{8, 7, 3\} \\
9 & \{9, 8, 4\} \\
10 & \{10, 9, 5\} \\
11 & \{11, 10, 6\} \\
\hline
\end{tabular}
\caption{\scriptsize The 12 Major Triads of the Far Wide Thirds System.}
\label{tab:far_wide_major_def}
\end{minipage}
\hfill
\begin{minipage}{0.48\textwidth}
\centering
\begin{tabular}{|c|c|}
\hline
\textbf{Root} ($k$) & \textbf{Minor Chord} $m_k = \{k, k+8, k+7\}$ \\
\hline
0 & \{0, 8, 7\} \\
1 & \{1, 9, 8\} \\
2 & \{2, 10, 9\} \\
3 & \{3, 11, 10\} \\
4 & \{4, 0, 11\} \\
5 & \{5, 1, 0\} \\
6 & \{6, 2, 1\} \\
7 & \{7, 3, 2\} \\
8 & \{8, 4, 3\} \\
9 & \{9, 5, 4\} \\
10 & \{10, 6, 5\} \\
11 & \{11, 7, 6\} \\
\hline
\end{tabular}
\caption{\scriptsize The 12 Minor Triads of the Far Wide Thirds System.}
\label{tab:far_wide_minor_def}
\end{minipage}
\end{table}

\subsection{The Tonnetz and Voice Leading}
The topology is determined by the three fundamental transformations. Note the specific index shifts derived from the $(11,8)$ geometry:

\begin{align*}
    P(M_k) &= m_k & P(m_k) &= M_k \\
    L(M_k) &= m_{k+11} & L(m_k) &= M_{k+1} \\
    R(M_k) &= m_{k+4} & R(m_k) &= M_{k+8}
\end{align*}

The complete set of connections is provided in Tables \ref{tab:far_wide_major} and \ref{tab:far_wide_minor}.

\begin{table}[H]
\centering
\scriptsize
\renewcommand{\arraystretch}{1.2}
\begin{minipage}{0.48\textwidth}
\centering
\begin{tabular}{|c||c|c|c|}
\hline
\textbf{Source} ($M_k$) & \textbf{P} ($m_k$) & \textbf{L} ($m_{k+11}$) & \textbf{R} ($m_{k+4}$) \\
\hline
$M_0$ & $m_0$ & $m_{11}$ & $m_4$ \\
$M_1$ & $m_1$ & $m_{0}$ & $m_5$ \\
$M_2$ & $m_2$ & $m_{1}$ & $m_6$ \\
$M_3$ & $m_3$ & $m_{2}$ & $m_7$ \\
$M_4$ & $m_4$ & $m_{3}$ & $m_8$ \\
$M_5$ & $m_5$ & $m_{4}$ & $m_9$ \\
$M_6$ & $m_6$ & $m_{5}$ & $m_{10}$ \\
$M_7$ & $m_7$ & $m_{6}$ & $m_{11}$ \\
$M_8$ & $m_8$ & $m_{7}$ & $m_{0}$ \\
$M_9$ & $m_9$ & $m_{8}$ & $m_{1}$ \\
$M_{10}$ & $m_{10}$ & $m_{9}$ & $m_{2}$ \\
$M_{11}$ & $m_{11}$ & $m_{10}$ & $m_{3}$ \\
\hline
\end{tabular}
\caption{\scriptsize Voice Leading: Major Chords $M_k$.}
\label{tab:far_wide_major}
\end{minipage}
\hfill
\begin{minipage}{0.48\textwidth}
\centering
\begin{tabular}{|c||c|c|c|}
\hline
\textbf{Source} ($m_k$) & \textbf{P} ($M_k$) & \textbf{L} ($M_{k+1}$) & \textbf{R} ($M_{k+8}$) \\
\hline
$m_0$ & $M_0$ & $M_1$ & $M_8$ \\
$m_1$ & $M_1$ & $M_2$ & $M_9$ \\
$m_2$ & $M_2$ & $M_3$ & $M_{10}$ \\
$m_3$ & $M_3$ & $M_4$ & $M_{11}$ \\
$m_4$ & $M_4$ & $M_5$ & $M_{0}$ \\
$m_5$ & $M_5$ & $M_6$ & $M_{1}$ \\
$m_6$ & $M_6$ & $M_7$ & $M_{2}$ \\
$m_7$ & $M_7$ & $M_8$ & $M_{3}$ \\
$m_8$ & $M_8$ & $M_9$ & $M_{4}$ \\
$m_9$ & $M_9$ & $M_{10}$ & $M_{5}$ \\
$m_{10}$ & $M_{10}$ & $M_{11}$ & $M_{6}$ \\
$m_{11}$ & $M_{11}$ & $M_{0}$ & $M_{7}$ \\
\hline
\end{tabular}
\caption{\scriptsize Voice Leading: Minor Chords $m_k$.}
\label{tab:far_wide_minor}
\end{minipage}
\end{table}

\subsection{Topological Properties}
The Levi graph of this system is a connected cubic graph on 24 vertices. Its structure is particularly rich, possessing two distinct Hamiltonian cycles and a clean tiling by hexagons.

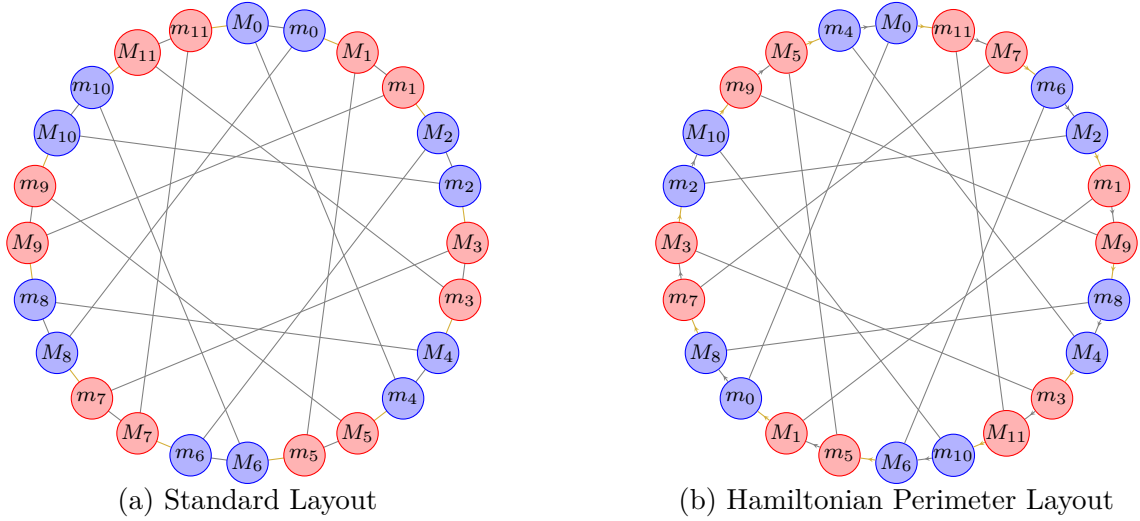
\begin{figure}[H]
    \centering
    % Define style for smaller nodes globally for this figure
    \tikzset{
        mynode/.style={circle, inner sep=0pt, minimum size=0.55cm, font=\tiny}
    }
    \tikzset{smallmidarrow/.style={postaction={decorate}, decoration={markings,mark=at position 0.5 with {\arrow[scale=0.6]{stealth}}}}}

    % --- LEFT FIGURE: STANDARD SEQUENTIAL ---
    \begin{minipage}{0.48\textwidth}
        \centering
        \begin{tikzpicture}[scale=0.65]
            \def\R{4.5}
            \definecolor{mygold}{RGB}{212, 175, 55}

            % 1. Coordinates: Standard Sequence
            \foreach \i in {0,1,...,11} {
                \pgfmathsetmacro{\angM}{90 - \i*30}
                \pgfmathsetmacro{\angm}{90 - \i*30 - 15}
                \coordinate (cM\i) at (\angM:\R);
                \coordinate (cm\i) at (\angm:\R);
            }

            % 2. Edges for (11,8) System
            \foreach \i in {0,1,...,11} {
                % P edges: M_i -- m_i
                \draw[gray, thin] (cM\i) -- (cm\i);
                % R edges: M_i -- m_{i+4}
                \pgfmathsetmacro{\idxR}{int(mod(\i+4,12))}
                \draw[gray, thin] (cM\i) -- (cm\idxR);
                % L edges: M_i -- m_{i+11}
                \pgfmathsetmacro{\idxL}{int(mod(\i+11,12))}
                \draw[thin, color=mygold] (cM\i) -- (cm\idxL);
            }

            % 3. Nodes (Small)
            \foreach \i in {0,1,...,11} {
                \pgfmathsetmacro{\isOdd}{int(mod(\i,2))}
                \ifnum\isOdd=0
                    \node[mynode, draw=blue, fill=blue!30] at (cM\i) {$M_{\i}$};
                    \node[mynode, draw=blue, fill=blue!30] at (cm\i) {$m_{\i}$};
                \else
                    \node[mynode, draw=red, fill=red!30] at (cM\i) {$M_{\i}$};
                    \node[mynode, draw=red, fill=red!30] at (cm\i) {$m_{\i}$};
                \fi
            }
            \node at (0, -\R-0.8) {\small (a) Standard Layout};
        \end{tikzpicture}
    \end{minipage}%
    \hfill
    % --- RIGHT FIGURE: HAMILTONIAN PERIMETER LAYOUT (LR-Cycle) ---
    \begin{minipage}{0.48\textwidth}
        \centering
        \begin{tikzpicture}[scale=0.65, >=stealth]
            \def\R{4.5}
            \definecolor{mygold}{RGB}{212, 175, 55}
            
            % Explicit list for the Hamiltonian Cycle (Circle of Fifths order, interleaved L/R)
            % Sequence generated by L (+11) then R (+8). Net shift +7.
            % M0 -L-> m11 -R-> M7 -L-> m6 -R-> M2...
            
            \foreach \k/\id/\lab [count=\i from 0] in {
                0/M_0/M_{0}, 11/m_11/m_{11}, 7/M_7/M_{7}, 6/m_6/m_{6}, 
                2/M_2/M_{2}, 1/m_1/m_{1}, 9/M_9/M_{9}, 8/m_8/m_{8}, 
                4/M_4/M_{4}, 3/m_3/m_{3}, 11/M_11/M_{11}, 10/m_10/m_{10}, 
                6/M_6/M_{6}, 5/m_5/m_{5}, 1/M_1/M_{1}, 0/m_0/m_{0}, 
                8/M_8/M_{8}, 7/m_7/m_{7}, 3/M_3/M_{3}, 2/m_2/m_{2}, 
                10/M_10/M_{10}, 9/m_9/m_{9}, 5/M_5/M_{5}, 4/m_4/m_{4}} 
            {
                \pgfmathsetmacro{\angle}{90 - \i * (360/24)}
                
                % Determine Color based on original index \k
                \pgfmathsetmacro{\isOdd}{int(mod(\k,2))}
                \ifnum\isOdd=0
                    \node[mynode, draw=blue, fill=blue!30, alias=P_\id] (Node\i) at (\angle:\R) {$\lab$};
                \else
                    \node[mynode, draw=red, fill=red!30, alias=P_\id] (Node\i) at (\angle:\R) {$\lab$};
                \fi
            }
            
            % 1. P-connections (Internal chords)
            \foreach \k in {0,...,11} {
                \draw[gray, thin] (P_M_\k) -- (P_m_\k);
            }

            % 2. Sequential Hamiltonian connections (Perimeter)
            \foreach \i in {0,...,23} {
                \pgfmathsetmacro{\next}{int(mod(\i+1,24))}
                \pgfmathsetmacro{\isR}{int(mod(\i,2))} 
                \ifnum\isR=0 
                    % Even steps: M -> m (L transformation) -> Gold
                    \draw[color=mygold, thin, smallmidarrow] (Node\i) -- (Node\next);
                \else 
                    % Odd steps: m -> M (R transformation) -> Gray
                    \draw[gray, thin, smallmidarrow] (Node\i) -- (Node\next);
                \fi
            }
            
            \node at (0, -\R-0.8) {\small (b) Hamiltonian Perimeter Layout};
        \end{tikzpicture}
    \end{minipage}
    \caption{The Connected Levi Graph for $(t,s)=(11,8)$. (a) Standard layout. (b) Hamiltonian perimeter layout following the Circle of Fifths ($L \cdot R$). Note that, unlike in the previous system, the $PL$ cycle is also Hamiltonian; it corresponds to the chromatic perimeter sequence visible in the standard layout (a).}
    %\caption{The Connected Levi Graph for $(t,s)=(11,8)$. (a) Standard layout. (b) Hamiltonian perimeter layout following the Circle of Fifths ($L \cdot R$). Note that unlike the previous system, the $PL$ cycle (not shown on figure b) asthe perimeter) is also Hamiltonian (shown as as the perimeter on the left figure).}
    \label{fig:levi_circular_delta11_8}
\end{figure}

\subsubsection{Minimal Circles and Girth}
An analysis of the cyclic paths reveals a key distinction from the $(10,9)$ system: the presence of binary hexagons.

\begin{itemize}
    \item \textbf{The $PL$-cycle:} The sequence $M_k \xrightarrow{P} m_k \xrightarrow{L} M_{k+1}$ induces a root shift of $+1$. Since $\gcd(1, 12)=1$, this generates a Hamiltonian cycle of length 24 (the Chromatic Circle).
    \item \textbf{The $LR$-cycle:} The sequence $M_k \xrightarrow{L} m_{k+11} \xrightarrow{R} M_{k+7}$ induces a root shift of $+7$. Since $\gcd(7,12)=1$, this also generates a Hamiltonian cycle of length 24 (the Circle of Fifths).
    \item \textbf{The $PR$-cycle:} The sequence $M_k \xrightarrow{P} m_k \xrightarrow{R} M_{k+8}$ induces a root shift of $+8$. Since $12/\gcd(8,12)=3$, this generates four disjoint cycles of length 6 (Hexagons).
\end{itemize}

\begin{theorem}[Girth of the Far Wide Thirds System]
The Levi graph of the $(11,8)$ system has girth 6. It is tiled by four disjoint $PR$-hexagons.
\end{theorem}

\begin{proof}
Consider the sequence $S = P \cdot R$. The path proceeds:
$M_k \xrightarrow{P} m_k \xrightarrow{R} M_{k+8} \xrightarrow{P} m_{k+8} \xrightarrow{R} M_{k+16} \xrightarrow{P} m_{k+16} \xrightarrow{R} M_{k+24}$.
Since $24 \equiv 0 \pmod{12}$, the path closes after 6 edges.
\[ M_0 \leftrightarrow m_0 \leftrightarrow M_8 \leftrightarrow m_8 \leftrightarrow M_4 \leftrightarrow m_4 \leftrightarrow M_0 \]
Since the graph is bipartite, there are no odd cycles. A girth of 4 would require $M_k$ to share two neighbors with some $m_j$. The neighbors of $M_0$ are $\{m_0, m_{11}, m_4\}$. None of these share another Major neighbor with $M_0$ via $P, L$, or $R$ in fewer than 6 steps. Thus, the girth is 6.
\end{proof}

\begin{figure}[H]
    \centering
    \begin{tikzpicture}[
        scale=0.6, 
        >=stealth,
        midarrow/.style={
            postaction={decorate},
            decoration={
                markings,
                mark=at position 0.5 with {\arrow{stealth}}
            }
        },
        mynode/.style={circle, inner sep=1pt, fill=gray!10, text=gray!50, font=\tiny}
    ]
        \def\R{4.5}
        
        % --- NODES & COORDINATES ---
        \foreach \i in {0,1,...,11} {
            \pgfmathsetmacro{\angM}{90 - \i*30}
            \pgfmathsetmacro{\angm}{90 - \i*30 - 15}
            \coordinate (M\i) at (\angM:\R);
            \coordinate (m\i) at (\angm:\R);
            
            \node[mynode] (N_M\i) at (M\i) {$M_{\i}$};
            \node[mynode] (N_m\i) at (m\i) {$m_{\i}$};
        }
        
        % --- BACKGROUND EDGES (Faded) ---
        \foreach \i in {0,1,...,11} {
            \pgfmathsetmacro{\idxL}{int(mod(\i+11,12))}
            \pgfmathsetmacro{\idxR}{int(mod(\i+4,12))}
            \draw[gray!20, thin] (N_M\i) -- (N_m\i); % P
            \draw[gray!20, thin] (N_M\i) -- (N_m\idxR); % R
            \draw[gray!20, thin] (N_M\i) -- (N_m\idxL); % L
        }

        % --- CYCLE A: VIOLET (Hexagon 1: 0-8-4) ---
        \draw[violet, thick, midarrow] (N_M0) -- (N_m0); % P
        \draw[violet, thick, midarrow] (N_m0) -- (N_M8); % R
        \draw[violet, thick, midarrow] (N_M8) -- (N_m8); % P
        \draw[violet, thick, midarrow] (N_m8) -- (N_M4); % R
        \draw[violet, thick, midarrow] (N_M4) -- (N_m4); % P
        \draw[violet, thick, midarrow] (N_m4) -- (N_M0); % R
        
        % Highlight Nodes for Cycle A
        \foreach \n/\lab in {M0/M_{0}, m0/m_{0}, M8/M_{8}, m8/m_{8}, M4/M_{4}, m4/m_{4}} {
             \node[circle, draw=violet, fill=white, inner sep=1pt, font=\tiny, thick] at (N_\n) {$\lab$};
        }

        % --- CYCLE B: ORANGE (Disjoint Hexagon 2: 2-10-6) ---
        \draw[orange, thick, midarrow] (N_M2) -- (N_m2); % P
        \draw[orange, thick, midarrow] (N_m2) -- (N_M10); % R
        \draw[orange, thick, midarrow] (N_M10) -- (N_m10); % P
        \draw[orange, thick, midarrow] (N_m10) -- (N_M6); % R
        \draw[orange, thick, midarrow] (N_M6) -- (N_m6); % P
        \draw[orange, thick, midarrow] (N_m6) -- (N_M2); % R

        % Highlight Nodes for Cycle B
        \foreach \n/\lab in {M2/M_{2}, m2/m_{2}, M10/M_{10}, m10/m_{10}, M6/M_{6}, m6/m_{6}} {
             \node[circle, draw=orange, fill=white, inner sep=1pt, font=\tiny, thick] at (N_\n) {$\lab$};
        }

        % Legend
        \node[anchor=north] at (0, -\R-0.2) {
            \begin{tabular}{l}
            \textcolor{violet}{\textbf{Violet Hexagon}: The minimal $PR$-cycle starting at $M_0$} \\
            \textcolor{orange}{\textbf{Orange Hexagon}: The disjoint minimal $PR$-cycle starting at $M_2$}
            \end{tabular}
        };

    \end{tikzpicture}
    \caption{Visualization of two disjoint minimal cycles in the Far Wide Thirds system. The girth is 6, realized by simpler binary hexagons $(PR)^3$ rather than the ternary loops of the previous system.}
    \label{fig:minimal_hexagons_delta11_8}
\end{figure}

\subsection{Algebraic characterization and graph invariants}
The "Far Wide Thirds" system is algebraically cleaner than its $\Delta=3$ cousin (the Wide system) and topologically distinct from the $(10,9)$ system.

\begin{enumerate}
    \item \textbf{Configuration:} Like the $(10,9)$ system, this graph forms a valid $12_3$ geometric configuration (Girth 6). However, its tiling is simpler, relying on binary $PR$-hexagons rather than ternary $PLR$-hexagons.
    
    \item \textbf{Dual Hamiltonicity:} A unique feature of this system is that it possesses \textit{two} purely binary Hamiltonian cycles:
    \begin{itemize}
        \item \textbf{The Chromatic Tour ($P \cdot L$):} Alternating $P$ and $L$ steps through neighbors $M_k \to M_{k+1}$. This provides a chromatic traversal of the universe.
        \item \textbf{The Circle of Fifths Tour ($L \cdot R$):} Alternating $L$ and $R$ steps through $M_k \to M_{k+7}$. This stitches the universe together via the Perfect Fifth.
    \end{itemize}
\end{enumerate}
This duality suggests that the $(11,8)$ system, despite its "exotic" interval sizes, is one of the most highly connected and symmetric harmonic spaces available in 12-TET.

\section{The Case $\Delta=5$, $(t,s)=(6,1)$: The ``Tritone'' System}

In this section, we examine the specific harmonic configuration defined by the parameter $\Delta=5$. Following the classification in Section 3, this corresponds to the solution pair $(t, s) = (6, 1)$, where the ``Major Third'' is a tritone ($t=6$) and the ``Minor Third'' is a semitone ($s=1$).

\subsection{Major and Minor Chords}

The Major chord $M_k$ is defined by the set $\{k, k+6, k+7\}$, and the Minor chord $m_k$ by the set $\{k, k+1, k+7\}$. Table \ref{tab:delta5} enumerates the 24 polychords constituting this harmonic universe. Since these chord structures do not correspond to standard Western triads, we retain the abstract notation $M_k$ and $m_k$ to designate the chords rooted at $k$.

\begin{table}[H]
   \centering
   \scriptsize 
    \renewcommand{\arraystretch}{0.7}
    \begin{tabular}{|c|c|c|c|c|}
        \hline
        \textbf{Root ($k$)} & \textbf{Major Chord Set} & \textbf{Symbol} & \textbf{Minor Chord Set} & \textbf{Symbol} \\
        \hline
        0 & $\{0, 6, 7\}$ & $M_0$ & $\{0, 1, 7\}$ & $m_0$ \\
        1 & $\{1, 7, 8\}$ & $M_1$ & $\{1, 2, 8\}$ & $m_1$ \\
        2 & $\{2, 8, 9\}$ & $M_2$ & $\{2, 3, 9\}$ & $m_2$ \\
        3 & $\{3, 9, 10\}$ & $M_3$ & $\{3, 4, 10\}$ & $m_3$ \\
        4 & $\{4, 10, 11\}$ & $M_4$ & $\{4, 5, 11\}$ & $m_4$ \\
        5 & $\{5, 11, 0\}$ & $M_5$ & $\{5, 6, 0\}$ & $m_5$ \\
        6 & $\{6, 0, 1\}$ & $M_6$ & $\{6, 7, 1\}$ & $m_6$ \\
        7 & $\{7, 1, 2\}$ & $M_7$ & $\{7, 8, 2\}$ & $m_7$ \\
        8 & $\{8, 2, 3\}$ & $M_8$ & $\{8, 9, 3\}$ & $m_8$ \\
        9 & $\{9, 3, 4\}$ & $M_9$ & $\{9, 10, 4\}$ & $m_9$ \\
        10 & $\{10, 4, 5\}$ & $M_{10}$ & $\{10, 11, 5\}$ & $m_{10}$ \\
        11 & $\{11, 5, 6\}$ & $M_{11}$ & $\{11, 0, 6\}$ & $m_{11}$ \\
        \hline
    \end{tabular}
    \caption{The complete set of harmonic triads generated by the solution $(t,s)=(6,1)$ in 12-TET. The Major chords consist of a root, tritone, and fifth, while Minor chords consist of a root, semitone, and fifth.}
    \label{tab:delta5}
\end{table}
\subsection{The ``Tritone'' System Tonnetz}

We now establish the connectivity of the harmonic space for the ``Tritone'' system by applying the three standard involutions $P$, $L$, and $R$. In this configuration ($\Delta=5$), the transformations map the chords as follows: $P$ connects chords with the same root; $L$ connects a Major chord to the Minor chord a tritone away ($t=6$); and $R$ connects a Major chord to the Minor chord a semitone down ($s=1$).

Tables \ref{tab:neighbors_M_6_1} and \ref{tab:neighbors_m_6_1} explicitly list the $P, L, R$ neighbors for every Major and Minor chord in the system.

\begin{table}[H]
    \centering
    \scriptsize
    \renewcommand{\arraystretch}{1.2}
    
    % LEFT TABLE: Major Chords
    \begin{minipage}[t]{0.48\textwidth}
        \centering
        \begin{tabular}{|c||c|c|c|}
            \hline
            \textbf{Chord} & \textbf{P-neigh.} & \textbf{L-neigh.} & \textbf{R-neigh.} \\[-0.5ex]
            $(M_k)$ & $(m_k)$ & $(m_{k+6})$ & $(m_{k-1})$ \\
            \hline
            $M_0$ & $m_0$ & $m_6$ & $m_{11}$ \\
            $M_1$ & $m_1$ & $m_7$ & $m_0$ \\
            $M_2$ & $m_2$ & $m_8$ & $m_1$ \\
            $M_3$ & $m_3$ & $m_9$ & $m_2$ \\
            $M_4$ & $m_4$ & $m_{10}$ & $m_3$ \\
            $M_5$ & $m_5$ & $m_{11}$ & $m_4$ \\
            $M_6$ & $m_6$ & $m_0$ & $m_5$ \\
            $M_7$ & $m_7$ & $m_1$ & $m_6$ \\
            $M_8$ & $m_8$ & $m_2$ & $m_7$ \\
            $M_9$ & $m_9$ & $m_3$ & $m_8$ \\
            $M_{10}$ & $m_{10}$ & $m_4$ & $m_9$ \\
            $M_{11}$ & $m_{11}$ & $m_5$ & $m_{10}$ \\
            \hline
        \end{tabular}
        \caption{Neighbors of Major chords.}
        \label{tab:neighbors_M_6_1}
    \end{minipage}
    \hfill
    % RIGHT TABLE: Minor Chords
    \begin{minipage}[t]{0.48\textwidth}
        \centering
        \begin{tabular}{|c||c|c|c|}
            \hline
            \textbf{Chord} & \textbf{P-neigh.} & \textbf{L-neigh.} & \textbf{R-neigh.} \\[-0.5ex]
            $(m_k)$ & $(M_k)$ & $(M_{k+6})$ & $(M_{k+1})$ \\
            \hline
            $m_0$ & $M_0$ & $M_6$ & $M_1$ \\
            $m_1$ & $M_1$ & $M_7$ & $M_2$ \\
            $m_2$ & $M_2$ & $M_8$ & $M_3$ \\
            $m_3$ & $M_3$ & $M_9$ & $M_4$ \\
            $m_4$ & $M_4$ & $M_{10}$ & $M_5$ \\
            $m_5$ & $M_5$ & $M_{11}$ & $M_6$ \\
            $m_6$ & $M_6$ & $M_0$ & $M_7$ \\
            $m_7$ & $M_7$ & $M_1$ & $M_8$ \\
            $m_8$ & $M_8$ & $M_2$ & $M_9$ \\
            $m_9$ & $M_9$ & $M_3$ & $M_{10}$ \\
            $m_{10}$ & $M_{10}$ & $M_4$ & $M_{11}$ \\
            $m_{11}$ & $M_{11}$ & $M_5$ & $M_0$ \\
            \hline
        \end{tabular}
        \caption{Neighbors of Minor chords.}
        \label{tab:neighbors_m_6_1}
    \end{minipage}
\end{table}

\subsection{The Levi graph, Hamiltonian and minimal cycles}

Figure \ref{fig:levi_wide_cycles} presents the topological structure of the Levi graph for the $\Delta=5$ system.

The vertices are arranged in the chromatic sequence $(M_0, m_0, M_1, m_1, \dots)$ around the circle. This arrangement reveals that the graph is Hamiltonian: the perimeter forms a continuous cycle alternating between $P$ (connecting $M_k$ to $m_k$) and $R$ (connecting $m_k$ to $M_{k+1}$).

Crucially, this graph differs from the $\Delta=1$ case in its girth. While the standard Tonnetz contains hexagonal cycles (girth 6), the ``Tritone'' system contains cycles of length 4, indicating a girth of 4. Figure \ref{fig:levi_wide_cycles} highlights two such minimal cycles with opposite chirality:
\begin{itemize}
    \item A \textbf{clockwise} cycle (violet) involving the sequence $M_0 \xrightarrow{P} m_0 \xrightarrow{L} M_6 \xrightarrow{P} m_6 \xrightarrow{L} M_0$.
    \item A \textbf{counter-clockwise} cycle (gold) involving the sequence $m_9 \xrightarrow{P} M_9 \xrightarrow{L} m_3 \xrightarrow{P} M_3 \xrightarrow{L} m_9$.
\end{itemize}
The $L$-transformations (internal chords) connect diametrically opposed pairs, facilitating these "bowtie" shaped 4-cycles that cross the harmonic center.

\begin{figure}[H]
    \centering
    % Define specific colors for this diagram
    \definecolor{mygold}{rgb}{0.85, 0.65, 0.13}
    \definecolor{violet}{rgb}{0.5, 0.0, 0.5}
    
    \begin{tikzpicture}[scale=0.5, 
        every node/.style={transform shape},
        % Arrow decoration style
        arrow_mid/.style={decoration={markings, mark=at position 0.70 with {\arrow{Latex[length=3mm, width=1mm]}}}, postaction={decorate}}
    ]
        \def\R{5.5} % Radius of the circle
        
        % --- 1. Draw Background Edges (Faded) ---
        \foreach \k in {0,...,11} {
            % Calculate angles
            \pgfmathsetmacro{\angM}{90 - \k*30}
            \pgfmathsetmacro{\angm}{90 - \k*30 - 15}
            \coordinate (M\k) at (\angM:\R);
            \coordinate (m\k) at (\angm:\R);
            
            % L-transformation (Gray)
            \pgfmathsetmacro{\idxL}{int(mod(\k+6,12))}
            \pgfmathsetmacro{\angmL}{90 - \idxL*30 - 15}
            \coordinate (mL) at (\angmL:\R);
            \draw[mygold, thin] (M\k) -- (mL);
            
            % P-transformation (Black perimeter)
            \draw[red, thin] (M\k) -- (m\k);
            
            % R-transformation (Dashed perimeter)
            \pgfmathsetmacro{\idxNext}{int(mod(\k+1,12))}
            \pgfmathsetmacro{\angMnext}{90 - \idxNext*30}
            \coordinate (Mnext) at (\angMnext:\R);
            \draw[blue, thin] (m\k) -- (Mnext);
        }
        
        % --- 2. Highlight Minimal Cycles ---
        
        % Cycle 1: Clockwise (Violet) M0 -> m0 -> M6 -> m6 -> M0
        % M0(90 deg) -> m0(75 deg) -> M6(270 deg) -> m6(255 deg) -> M0
        \draw[violet, line width=1.0pt, arrow_mid] (M0) -- (m0);
        \draw[violet, line width=1.0pt, arrow_mid] (m0) -- (M6);
        \draw[violet, line width=1.0pt, arrow_mid] (M6) -- (m6);
        \draw[violet, line width=1.0pt, arrow_mid] (m6) -- (M0);
        
        % Cycle 2: Counter-Clockwise (Gold) m9 -> M9 -> m3 -> M3 -> m9
        % m9(165 deg) -> M9(180 deg) -> m3(345 deg) -> M3(360/0 deg) -> m9
        \draw[orange, line width=1.0pt, arrow_mid] (m9) -- (M9);
        \draw[orange, line width=1.0pt, arrow_mid] (M9) -- (m3);
        \draw[orange, line width=1.0pt, arrow_mid] (m3) -- (M3);
        \draw[orange, line width=1.0pt, arrow_mid] (M3) -- (m9);

        % --- 3. Draw Nodes with Labels (Top Layer) ---
        \foreach \k in {0,...,11} {
            \pgfmathsetmacro{\angM}{90 - \k*30}
            \pgfmathsetmacro{\angm}{90 - \k*30 - 15}
            
            % Major Node
            \node[circle, fill=blue!30, draw=blue, thick, 
                  inner sep=0pt, minimum size=0.9cm, text=black, font=\small] 
                  at (\angM:\R) {$M_{\k}$};
            
            % Minor Node
            \node[circle, fill=red!30, draw=red, thick, 
                  inner sep=0pt, minimum size=0.9cm, text=black, font=\small] 
                  at (\angm:\R) {$m_{\k}$};
        }
    \end{tikzpicture}
    \caption{The Levi graph of the Tritone System ($\Delta=5$) arranged chromatically. The perimeter forms a Hamiltonian cycle. The diagram specifically highlights two minimal cycles (girth 4) of opposite chirality: a clockwise cycle in violet ($M_0$-$m_0$-$M_6$-$m_6$) and a counter-clockwise cycle in gold ($m_9$-$M_9$-$m_3$-$M_3$). These 4-cycles correspond to the commutation of operations that is absent in the standard Tonnetz.}
    \label{fig:levi_wide_cycles}
\end{figure}
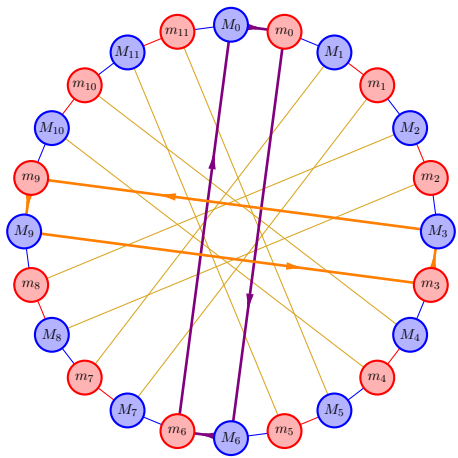

\subsection{Properties of the ``Tritone'' System  Graph}

We now characterize the topology of the graph generated by the $\Delta=5$ system, contrasting it with the properties of the standard $\Delta=1$ system established in Section 4.

\subsubsection{Girth and the Absence of a Geometric Configuration}
The most fundamental topological difference between the standard system and the ``Wide'' system lies in the girth of their respective Levi graphs.

In Section 4, we established that the Levi graph of the standard system has a girth of 6. The smallest cycles are hexagons (e.g., the sequence $PLRPLR$), which allowed us to interpret the graph as the incidence structure of a valid $12_3$ configuration.

In contrast, the graph of the Tritone system has a girth of 4. As illustrated in Figure \ref{fig:levi_wide_cycles}, the commutation of the $P$ and $L$ transformations creates quadrangular cycles:
\begin{equation}
    M_k \xrightarrow{P} m_k \xrightarrow{L} M_{k+6} \xrightarrow{P} m_{k+6} \xrightarrow{L} M_k.
\end{equation}
From the perspective of incidence geometry, a cycle of length 4 implies that two distinct points (chords) are connected by two distinct lines (transformations), or conversely, that two lines intersect at two points. This violates the axioms of a linear configuration, where two points must determine a unique line. Therefore, unlike the standard case, the Tritone system \textbf{does not} constitute a valid $12_3$ geometric configuration.

\subsubsection{Hamiltonian Structure}
While it fails to form a geometric configuration, the graph possesses a high degree of symmetry and connectivity. As demonstrated by the chromatic arrangement in Figure \ref{fig:levi_wide_cycles}, the graph is Hamiltonian. The perimeter, defined by the alternating sequence of $P$ and $R$ transformations, forms a Hamiltonian cycle of length 24:
\begin{equation}
    M_0 \xrightarrow{P} m_0 \xrightarrow{R} M_1 \xrightarrow{P} m_1 \xrightarrow{R} \dots \xrightarrow{R} M_{11} \xrightarrow{P} m_{11} \xrightarrow{R} M_0.
\end{equation}
This cycle corresponds to a complete chromatic traversal of the 12 roots, visiting every Major and Minor chord exactly once. This contrasts with the standard system, where the Hamiltonian path is typically constructed using the $L$ and $R$ operations (following the Circle of Fifths).

\subsubsection{Connectivity}
The graph can be described as a cubic, vertex-transitive graph of order 24. Structurally, it consists of the 24-cycle perimeter (formed by $P$ and $R$) with "cross-chords" formed by the $L$-transformation. Since $L$ connects $M_k$ to $m_{k+6}$, it links regions of the graph that are diametrically opposite in the chromatic circle. This high connectivity "shrinks" the graph, allowing for rapid traversal between harmonically distant chords via the tritone intervals.

\section{Three exotic musical systems}

So far, we have explored systems generated by the chromatic step ($q=1$ or $11$) and the perfect fifth ($q=7$). While these values of $q$ yield the most familiar musical structures, our generalized theoretical framework allows for any generator $q$ and any partition $(t,s)$ such that $t+s=q$.

To fully understand the landscape of possible harmonic universes in 12-TET, we must look beyond the standard generator. We wish to investigate other possibilities for $q$ to see if they yield coherent musical systems. Specifically, we will examine three "exotic" systems that correspond to the remaining valid classes of affine offsets in $\mathbb{Z}_{12}$:
\begin{itemize}
    \item The ``Tritonian Fifth'' System with $q=6$ and $(t,s)=(4,2)$.
    \item The ``Tritone-Wide'' System with $q=10$ and $(t,s)=(8,2)$.
    \item The ``Equally Spread'' System with $q=9$ and  $(t,s)=(6,3)$.
\end{itemize}

We begin with the most symmetric and topologically distinct of these: the system generated by the Tritone.

\subsection{The ``Tritonian Fifth'' System: \texorpdfstring{$(t,s)=(4,2)$}{t=4, s=2}}
In this system, the generator $q=6$ (the Tritone) is partitioned into a Major Third ($t=4$) and a Major Second ($s=2$). The parameter $\Delta = t-s = 2$.

\subsubsection{Chord Definitions}
The Major chord $M_k$ and Minor chord $m_k$ are defined as:
\begin{align}
    M_k &= \{k, k+4, k+6\} \\
    m_k &= \{k, k+2, k+6\}
\end{align}
Note that the interval vector for both chords is $[2, 4, 6]$. Unlike the standard triadic system, these chords are inversionally symmetric sets (specifically, set class 3-8 in Forte classification).

The full list of pitch sets for all 24 chords is provided in Tables \ref{tab:tritone_major_def} and \ref{tab:tritone_minor_def}.

\begin{table}[H]
\centering
\scriptsize
\renewcommand{\arraystretch}{1.2}
\begin{minipage}{0.48\textwidth}
\centering
\begin{tabular}{|c|c|}
\hline
\textbf{Root} ($k$) & \textbf{Major Chord} $M_k = \{k, k+4, k+6\}$ \\
\hline
0 & \{0, 4, 6\} \\
1 & \{1, 5, 7\} \\
2 & \{2, 6, 8\} \\
3 & \{3, 7, 9\} \\
4 & \{4, 8, 10\} \\
5 & \{5, 9, 11\} \\
6 & \{6, 10, 0\} \\
7 & \{7, 11, 1\} \\
8 & \{8, 0, 2\} \\
9 & \{9, 1, 3\} \\
10 & \{10, 2, 4\} \\
11 & \{11, 3, 5\} \\
\hline
\end{tabular}
\caption{Major Triads in the Tritone System.}
\label{tab:tritone_major_def}
\end{minipage}
\hfill
\begin{minipage}{0.48\textwidth}
\centering
\begin{tabular}{|c|c|}
\hline
\textbf{Root} ($k$) & \textbf{Minor Chord} $m_k = \{k, k+2, k+6\}$ \\
\hline
0 & \{0, 2, 6\} \\
1 & \{1, 3, 7\} \\
2 & \{2, 4, 8\} \\
3 & \{3, 5, 9\} \\
4 & \{4, 6, 10\} \\
5 & \{5, 7, 11\} \\
6 & \{6, 8, 0\} \\
7 & \{7, 9, 1\} \\
8 & \{8, 10, 2\} \\
9 & \{9, 11, 3\} \\
10 & \{10, 0, 4\} \\
11 & \{11, 1, 5\} \\
\hline
\end{tabular}
\caption{Minor Triads in the Tritone System.}
\label{tab:tritone_minor_def}
\end{minipage}
\end{table}

\subsubsection{The Tonnetz and Voice Leading}
The voice-leading transformations are derived by preserving the common intervals of size $q=6$, $t=4$, and $s=2$. Due to the geometry of the system, the transformations map as follows:
\begin{align*}
    P(M_k) &= m_k & P(m_k) &= M_k \\
    L(M_k) &= m_{k-2} & L(m_k) &= M_{k+2} \\
    R(M_k) &= m_{k+4} & R(m_k) &= M_{k-4}
\end{align*}
The complete voice-leading map is presented in Tables \ref{tab:tritone_major_vl} and \ref{tab:tritone_minor_vl}.

\begin{table}[H]
\centering
\scriptsize
\renewcommand{\arraystretch}{1.2}
\begin{minipage}{0.48\textwidth}
\centering
\begin{tabular}{|c||c|c|c|}
\hline
\textbf{Source} ($M_k$) & \textbf{P} ($m_k$) & \textbf{L} ($m_{k-2}$) & \textbf{R} ($m_{k+4}$) \\
\hline
$M_0$ & $m_0$ & $m_{10}$ & $m_4$ \\
$M_1$ & $m_1$ & $m_{11}$ & $m_5$ \\
$M_2$ & $m_2$ & $m_{0}$ & $m_6$ \\
$M_3$ & $m_3$ & $m_{1}$ & $m_7$ \\
$M_4$ & $m_4$ & $m_{2}$ & $m_8$ \\
$M_5$ & $m_5$ & $m_{3}$ & $m_9$ \\
$M_6$ & $m_6$ & $m_{4}$ & $m_{10}$ \\
$M_7$ & $m_7$ & $m_{5}$ & $m_{11}$ \\
$M_8$ & $m_8$ & $m_{6}$ & $m_{0}$ \\
$M_9$ & $m_9$ & $m_{7}$ & $m_{1}$ \\
$M_{10}$ & $m_{10}$ & $m_{8}$ & $m_{2}$ \\
$M_{11}$ & $m_{11}$ & $m_{9}$ & $m_{3}$ \\
\hline
\end{tabular}
\caption{Voice Leading: Major Chords $M_k$.}
\label{tab:tritone_major_vl}
\end{minipage}
\hfill
\begin{minipage}{0.48\textwidth}
\centering
\begin{tabular}{|c||c|c|c|}
\hline
\textbf{Source} ($m_k$) & \textbf{P} ($M_k$) & \textbf{L} ($M_{k+2}$) & \textbf{R} ($M_{k-4}$) \\
\hline
$m_0$ & $M_0$ & $M_2$ & $M_8$ \\
$m_1$ & $M_1$ & $M_3$ & $M_9$ \\
$m_2$ & $M_2$ & $M_4$ & $M_{10}$ \\
$m_3$ & $M_3$ & $M_5$ & $M_{11}$ \\
$m_4$ & $M_4$ & $M_6$ & $M_{0}$ \\
$m_5$ & $M_5$ & $M_7$ & $M_{1}$ \\
$m_6$ & $M_6$ & $M_8$ & $M_{2}$ \\
$m_7$ & $M_7$ & $M_9$ & $M_{3}$ \\
$m_8$ & $M_8$ & $M_{10}$ & $M_{4}$ \\
$m_9$ & $M_9$ & $M_{11}$ & $M_{5}$ \\
$m_{10}$ & $M_{10}$ & $M_{0}$ & $M_{6}$ \\
$m_{11}$ & $M_{11}$ & $M_{1}$ & $M_{7}$ \\
\hline
\end{tabular}
\caption{Voice Leading: Minor Chords $m_k$.}
\label{tab:tritone_minor_vl}
\end{minipage}
\end{table}

\subsubsection{Topological Disconnectedness}
A crucial feature of this system is immediately apparent from the transformation formulas. All root shifts are even integers:
\begin{itemize}
    \item $P$: shift 0
    \item $L$: shift $\pm 2$
    \item $R$: shift $\pm 4$
\end{itemize}
Consequently, a Major chord $M_k$ with an even root $k$ can only transform into Minor chords with even roots ($m_k, m_{k-2}, m_{k+4}$). Conversely, chords with odd roots connect only to other odd-rooted chords.

This causes the Levi graph to fracture into **two disjoint, isomorphic components**: the "Even Universe" and the "Odd Universe."

\begin{figure}[H]
    \centering
    \begin{tikzpicture}[scale=0.75]
        \def\R{2.8}
        \definecolor{mygold}{RGB}{212, 175, 55}
        \tikzset{mynode/.style={circle, draw, thin, inner sep=1pt, font=\scriptsize}}
        
        % --- COMPONENT 1: EVEN UNIVERSE ---
        \begin{scope}[xshift=-4cm]
            % Nodes: Distribute M0, m0, M2, m2... equidistant
            % Sequence: 0, 0, 2, 2, 4, 4...
            % Indices j goes 0..11. 
            % If j is even: M_{j}, angle. If j odd: m_{j-1}.
            % Actually easier to loop k=0,2,4... and place M_k and m_k
            
            \foreach \k [count=\i] in {0,2,4,6,8,10} {
                % M_k at 90 - (i-1)*60
                \pgfmathsetmacro{\angM}{90 - (\i-1)*60}
                % m_k at 90 - (i-1)*60 - 30
                \pgfmathsetmacro{\angm}{\angM - 30}
                
                \node[mynode, fill=blue!10] (M\k) at (\angM:\R) {$M_{\k}$};
                \node[mynode, fill=red!10] (m\k) at (\angm:\R) {$m_{\k}$};
            }
            
            % Edges
            \foreach \k in {0,2,4,6,8,10} {
                % P: M_k -- m_k (Blue)
                \draw[red, thick] (M\k) -- (m\k);
                
                % L: M_k -- m_{k-2} (Gold)
                \pgfmathsetmacro{\prev}{int(mod(\k-2+12,12))}
                \draw[blue, thick] (M\k) -- (m\prev);
                
                % R: M_k -- m_{k+4} (Red)
                \pgfmathsetmacro{\next}{int(mod(\k+4,12))}
                \draw[mygold, thick] (M\k) -- (m\next);
            }
            \node at (0, -\R-0.8) {\textbf{(a) The Even Universe}};
        \end{scope}

        % --- COMPONENT 2: ODD UNIVERSE ---
        \begin{scope}[xshift=4cm]
            % Same logic, k=1,3...
             \foreach \k [count=\i] in {1,3,5,7,9,11} {
                \pgfmathsetmacro{\angM}{90 - (\i-1)*60}
                \pgfmathsetmacro{\angm}{\angM - 30}
                
                \node[mynode, fill=blue!10] (M\k) at (\angM:\R) {$M_{\k}$};
                \node[mynode, fill=red!10] (m\k) at (\angm:\R) {$m_{\k}$};
            }
            
            % Edges
            \foreach \k in {1,3,5,7,9,11} {
                % P (Blue)
                \draw[red, thick] (M\k) -- (m\k);
                
                % L (Gold)
                \pgfmathsetmacro{\prev}{int(mod(\k-2+12,12))}
                \draw[blue, thick] (M\k) -- (m\prev);
                
                % R (Red)
                \pgfmathsetmacro{\next}{int(mod(\k+4,12))}
                \draw[mygold, thick] (M\k) -- (m\next);
            }
            \node at (0, -\R-0.8) {\textbf{(b) The Odd Universe}};
        \end{scope}
        
        % Legend
        \node[anchor=north] at (0, -\R-1.5) {
            \scriptsize
            \begin{tabular}{ccc}
            \textcolor{red}{\textbf{--- P (Red)}} & \textcolor{mygold}{\textbf{--- R (Gold)}} & \textcolor{blue}{\textbf{--- L (Blue)}}
            \end{tabular}
        };
    \end{tikzpicture}
    \caption{The Disconnected Levi Graph of the $(6,4,2)$ system. The harmonic world splits into two disjoint components (Even and Odd roots). Each component is Hamiltonian, with the alternating $PL$ cycle (Red-Blue) forming the perimeter, while the $R$ transformations (Gold) act as internal passages connecting $M_i$ to $m_{i+4}$.}
    \label{fig:tritone_disconnected}
\end{figure}
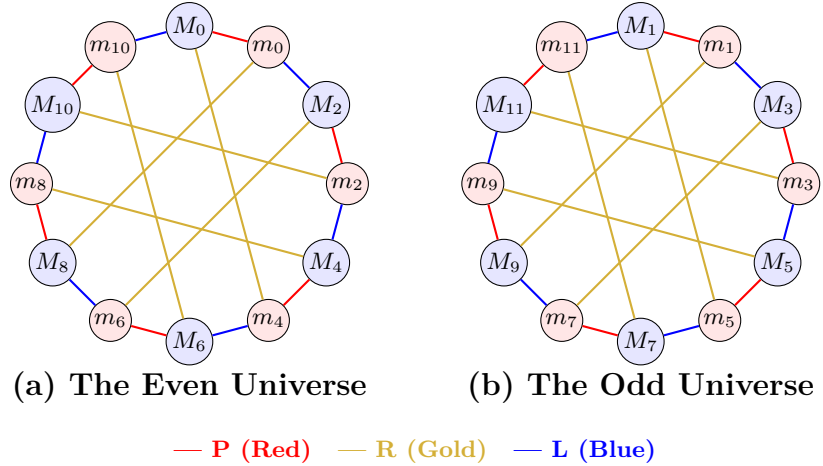
\subsubsection{Minimal Cycles and Girth}
Within each component, the graph structure is highly regular. The system's girth is 4, defined by the $LR$-cycle.

\begin{itemize}
    \item \textbf{The Square ($LR$-cycle):} $M_k \xrightarrow{L} m_{k-2} \xrightarrow{R} M_{k-6} \xrightarrow{L} m_{k-8} \xrightarrow{R} M_{k-12} \equiv M_k$.
    Since $6 \equiv -6 \pmod{12}$, this cycle has length 4: $M_0 \leftrightarrow m_{10} \leftrightarrow M_6 \leftrightarrow m_4 \leftrightarrow M_0$.
 \end{itemize}

\begin{figure}[H]
    \centering
    \begin{tikzpicture}[scale=0.8, >=stealth]
        \def\R{3.0}
        \definecolor{mygold}{RGB}{212, 175, 55}
        \tikzset{mynode/.style={circle, draw, thin, inner sep=1pt, font=\scriptsize}}
 \begin{scope}[xshift=-4cm]
        % --- Nodes: Even Universe on a Single Ring ---
        % M_k and m_k alternating.
        % Indices k in {0,2,4,6,8,10}
        \foreach \k [count=\i] in {0,2,4,6,8,10} {
            \pgfmathsetmacro{\angM}{90 - (\i-1)*60}
            \pgfmathsetmacro{\angm}{\angM - 30}
            
            \node[mynode, fill=blue!10] (M\k) at (\angM:\R) {$M_{\k}$};
            \node[mynode, fill=red!10] (m\k) at (\angm:\R) {$m_{\k}$};
        }

        % --- Faint Background Edges (Context) ---
        \foreach \k in {0,2,4,6,8,10} {
             \pgfmathsetmacro{\prev}{int(mod(\k-2+12,12))}
             \pgfmathsetmacro{\next}{int(mod(\k+4,12))}
             % Draw faint gray for all possible connections to show the mesh
             \draw[gray!30, thin] (M\k) -- (m\k);      % P
             \draw[gray!30, thin] (M\k) -- (m\prev);  % L
             \draw[gray!30, thin] (M\k) -- (m\next);  % R
        }

        % --- HIGHLIGHT: The Square (LR-Cycle) ---
        % Cycle: M0 -L-> m10 -R-> M6 -L-> m4 -R-> M0
        % Colors: L=Blue, R=Gold
        \draw[blue, very thick] (M0) -- (m10);
        \draw[mygold, very thick] (m10) -- (M6);
        \draw[blue, very thick] (M6) -- (m4);
        \draw[mygold, very thick] (m4) -- (M0);
 \end{scope}
 
 \begin{scope}[xshift=4cm]
    % --- Nodes: Even Universe on a Single Ring ---
        % M_k and m_k alternating.
        % Indices k in {0,2,4,6,8,10}
        \foreach \k [count=\i] in {1,3,5,7,9,11} {
            \pgfmathsetmacro{\angM}{90 - (\i-1)*60}
            \pgfmathsetmacro{\angm}{\angM - 30}
            
            \node[mynode, fill=blue!10] (M\k) at (\angM:\R) {$M_{\k}$};
            \node[mynode, fill=red!10] (m\k) at (\angm:\R) {$m_{\k}$};
        }

        % --- Faint Background Edges (Context) ---
        \foreach \k in {1,3,5,7,9,11} {
             \pgfmathsetmacro{\prev}{int(mod(\k-2+12,12))}
             \pgfmathsetmacro{\next}{int(mod(\k+4,12))}
             % Draw faint gray for all possible connections to show the mesh
             \draw[gray!30, thin] (M\k) -- (m\k);      % P
             \draw[gray!30, thin] (M\k) -- (m\prev);  % L
             \draw[gray!30, thin] (M\k) -- (m\next);  % R
        }

        % --- HIGHLIGHT: The Square (LR-Cycle) ---
        % Cycle: M0 -L-> m10 -R-> M6 -L-> m4 -R-> M0
        % Colors: L=Blue, R=Gold
        \draw[blue, very thick] (M3) -- (m1);
        \draw[mygold, very thick] (m1) -- (M9);
        \draw[blue, very thick] (M9) -- (m7);
        \draw[mygold, very thick] (m7) -- (M3);
    \end{scope}
        % --- Legend ---
        \node[anchor=north] at (0., -\R-0.5) {
            \begin{tabular}{l}
            %\textbf{Minimal Cycles:} \\
            \textbf{Square ($LR$)}: Alternating \textcolor{blue}{\textbf{L (Blue)}} and \textcolor{mygold}{\textbf{R (Gold)}} edges. 
                 \end{tabular}
        };
    \end{tikzpicture}
    \caption{Minimal cycles in both Even and Odd Universes visualized on the Levi graph with 12 nodes. Both graphs have girth 4 due to the presence of $LR$-squares.}
    \label{fig:tritone_minimal}
\end{figure}
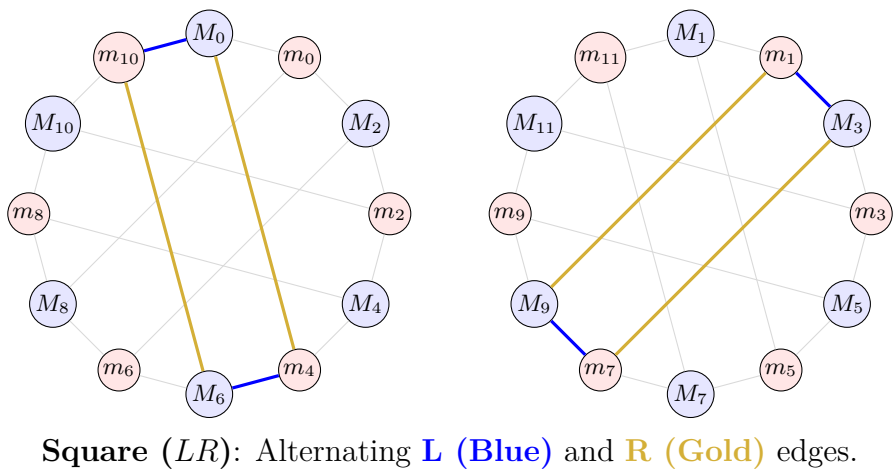
\subsubsection{Hamiltonian Cycles}
It is visible on Figure \ref{fig:tritone_disconnected} that each connected component of the Levi graph of the full system is Hamiltonian. The $PL$-cycle provides a complete tour of the component (length 12).
The sequence $M_k \xrightarrow{P} m_k \xrightarrow{L} M_{k+2}$ shifts the root by $+2$ and produces a cycle: $M_0 \to m_0 \to M_2 \to m_2 \to M_4 \to m_4 \to M_6 \to m_6 \to M_8 \to m_8 \to M_{10} \to m_{10} \to M_0$. See Figure \ref{fig:tritone_disconnected}.

\subsection{The ``Tritone-Wide'' System: \texorpdfstring{$(t,s)=(8,2)$}{t=8, s=2}}

In this system, the generator $q=10$ (equivalent to a Minor Seventh, or an inverted Major Second) is partitioned into a Minor Sixth ($t=8$) and a Major Second ($s=2$). The difference $\Delta = t-s = 6$ is once again the Tritone, suggesting a structural affinity with the previous system.

\subsubsection{Chord Definitions}
The Major chord $M_k$ and Minor chord $m_k$ are defined as:
\begin{align}
    M_k &= \{k, k+8, k+10\} \\
    m_k &= \{k, k+2, k+10\}
\end{align}
These trichords belong to set class 3-6 (Forte), which is inversionally symmetric and consists of the intervals $[2, 8, 10]$ (or normal order $[0, 2, 4]$).

The chords are listed in Tables \ref{tab:wide_major_def} and \ref{tab:wide_minor_def}.

\begin{table}[H]
\centering
\scriptsize
\renewcommand{\arraystretch}{1.2}
\begin{minipage}{0.48\textwidth}
\centering
\begin{tabular}{|c|c|}
\hline
\textbf{Root} ($k$) & \textbf{Major Chord} $M_k = \{k, k+8, k+10\}$ \\
\hline
0 & \{0, 8, 10\} \\
1 & \{1, 9, 11\} \\
2 & \{2, 10, 0\} \\
3 & \{3, 11, 1\} \\
4 & \{4, 0, 2\} \\
5 & \{5, 1, 3\} \\
6 & \{6, 2, 4\} \\
7 & \{7, 3, 5\} \\
8 & \{8, 4, 6\} \\
9 & \{9, 5, 7\} \\
10 & \{10, 6, 8\} \\
11 & \{11, 7, 9\} \\
\hline
\end{tabular}
\caption{Major Triads in the Tritone-Wide System.}
\label{tab:wide_major_def}
\end{minipage}
\hfill
\begin{minipage}{0.48\textwidth}
\centering
\begin{tabular}{|c|c|}
\hline
\textbf{Root} ($k$) & \textbf{Minor Chord} $m_k = \{k, k+2, k+10\}$ \\
\hline
0 & \{0, 2, 10\} \\
1 & \{1, 3, 11\} \\
2 & \{2, 4, 0\} \\
3 & \{3, 5, 1\} \\
4 & \{4, 6, 2\} \\
5 & \{5, 7, 3\} \\
6 & \{6, 8, 4\} \\
7 & \{7, 9, 5\} \\
8 & \{8, 10, 6\} \\
9 & \{9, 11, 7\} \\
10 & \{10, 0, 8\} \\
11 & \{11, 1, 9\} \\
\hline
\end{tabular}
\caption{Minor Triads in the Tritone-Wide System.}
\label{tab:wide_minor_def}
\end{minipage}
\end{table}

\subsubsection{Voice Leading and Graph Topology}
The voice-leading transformations preserve the structural intervals of the system ($q=10, t=8, s=2$). The transformation rules are:
\begin{align*}
    P(M_k) &= m_k & P(m_k) &= M_k \\
    L(M_k) &= m_{k-2} & L(m_k) &= M_{k+2} \\
    R(M_k) &= m_{k+8} & R(m_k) &= M_{k-8} \equiv M_{k+4}
\end{align*}

The root shifts ($\Delta P=0, \Delta L=\pm 2, \Delta R=\pm 4$) are all even. Consequently, this system also fractures into two disjoint **Even** and **Odd** universes. When compared to the $q=6$ with $(t,s)=(4,2)$ system, the current system has quite different internal connections. Now, the $R$ transformation connects $M_k$ to $m_{k+8}$ (a span of +8 semitones) rather than $m_{k+4}$.

\begin{figure}[H]
    \centering
    \begin{tikzpicture}[scale=0.75]
        \def\R{2.8}
        \definecolor{mygold}{RGB}{212, 175, 55}
        \tikzset{mynode/.style={circle, draw, thin, inner sep=1pt, font=\scriptsize}}
        
        % --- COMPONENT 1: EVEN UNIVERSE ---
        \begin{scope}[xshift=-4cm]
            \foreach \k [count=\i] in {0,2,4,6,8,10} {
                \pgfmathsetmacro{\angM}{90 - (\i-1)*60}
                \pgfmathsetmacro{\angm}{\angM - 30}
                \node[mynode, fill=blue!10] (M\k) at (\angM:\R) {$M_{\k}$};
                \node[mynode, fill=red!10] (m\k) at (\angm:\R) {$m_{\k}$};
            }
            \foreach \k in {0,2,4,6,8,10} {
                % P: M_k -- m_k (Red)
                \draw[red, thick] (M\k) -- (m\k);
                % L: M_k -- m_{k-2} (Blue)
                \pgfmathsetmacro{\prev}{int(mod(\k-2+12,12))}
                \draw[blue, thick] (M\k) -- (m\prev);
                % R: M_k -- m_{k+8} (Gold)
                \pgfmathsetmacro{\next}{int(mod(\k+8,12))}
                \draw[mygold, thick] (M\k) -- (m\next);
            }
            \node at (0, -\R-0.8) {\textbf{(a) The Even Universe}};
        \end{scope}

        % --- COMPONENT 2: ODD UNIVERSE ---
        \begin{scope}[xshift=4cm]
             \foreach \k [count=\i] in {1,3,5,7,9,11} {
                \pgfmathsetmacro{\angM}{90 - (\i-1)*60}
                \pgfmathsetmacro{\angm}{\angM - 30}
                \node[mynode, fill=blue!10] (M\k) at (\angM:\R) {$M_{\k}$};
                \node[mynode, fill=red!10] (m\k) at (\angm:\R) {$m_{\k}$};
            }
            \foreach \k in {1,3,5,7,9,11} {
                % P (Red)
                \draw[red, thick] (M\k) -- (m\k);
                % L (Blue)
                \pgfmathsetmacro{\prev}{int(mod(\k-2+12,12))}
                \draw[blue, thick] (M\k) -- (m\prev);
                % R (Gold) - Connects to k+8
                \pgfmathsetmacro{\next}{int(mod(\k+8,12))}
                \draw[mygold, thick] (M\k) -- (m\next);
            }
            \node at (0, -\R-0.8) {\textbf{(b) The Odd Universe}};
        \end{scope}
        
        % Legend
        \node[anchor=north] at (0, -\R-1.5) {
            \scriptsize
            \begin{tabular}{ccc}
            \textcolor{red}{\textbf{--- P (Red)}} & \textcolor{blue}{\textbf{--- L (Blue)}} & \textcolor{mygold}{\textbf{--- R (Gold)}}
            \end{tabular}
        };
    \end{tikzpicture}
    \caption{The Disconnected Levi Graph of the $(10,8,2)$ ``Tritone-Wide'' system. Like the previous system, it splits into Even and Odd components. The $P$ and $L$ connections are identical, but the $R$ transformation (Gold) now connects $M_k$ to $m_{k+8}$, creating a different internal chordal geometry.}
    \label{fig:wide_disconnected}
\end{figure}
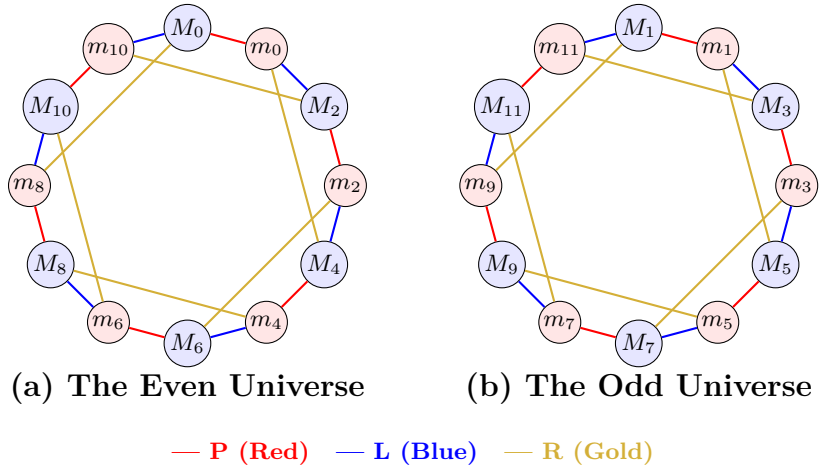

\subsubsection{Minimal Cycles and Girth}
In the ``Tritone-Wide'' system, the graph topology maintains a girth of 4, but the structure of the minimal cycles is fundamentally different from the previous system. Unlike the simple $LR$-squares found in the $q=6$, $(t,s)=(4,2)$ system, the minimal cycle here involves all three transformations: $P$, $L$, and $R$.

Specifically, the cycle is formed by the sequence $LPLR$ (or its retrograde). Starting from a major chord $M_k$, we see that the sequence 
\[
M_k \xrightarrow{L} m_{k-2} \xrightarrow{P} M_{k-2} \xrightarrow{L} m_{k-4} \xrightarrow{R} M_k
\]
gives a closed loop of 4 edges:

\begin{figure}[H]
    \centering
    \begin{tikzpicture}[scale=0.8, >=stealth]
        \def\R{3.0}
        \definecolor{mygold}{RGB}{212, 175, 55}
        \tikzset{mynode/.style={circle, draw, thin, inner sep=1pt, font=\scriptsize}}

        % --- COMPONENT 1: EVEN UNIVERSE ---
        \begin{scope}[xshift=-4cm]
            % Nodes
            \foreach \k [count=\i] in {0,2,4,6,8,10} {
                \pgfmathsetmacro{\angM}{90 - (\i-1)*60}
                \pgfmathsetmacro{\angm}{\angM - 30}
                \node[mynode, fill=blue!10] (M\k) at (\angM:\R) {$M_{\k}$};
                \node[mynode, fill=red!10] (m\k) at (\angm:\R) {$m_{\k}$};
            }
            % Faint background
            \foreach \k in {0,2,4,6,8,10} {
                 \pgfmathsetmacro{\prev}{int(mod(\k-2+12,12))}
                 \pgfmathsetmacro{\next}{int(mod(\k+8,12))}
                 \draw[gray!20, thin] (M\k) -- (m\k);
                 \draw[gray!20, thin] (M\k) -- (m\prev);
                 \draw[gray!20, thin] (M\k) -- (m\next);
            }
            % Highlight Cycle: M0 -> m10 -> M10 -> m8 -> M0
            \draw[blue, very thick] (M0) -- (m10);   % L
            \draw[red, very thick] (m10) -- (M10);   % P
            \draw[blue, very thick] (M10) -- (m8);   % L
            \draw[mygold, very thick] (m8) -- (M0);  % R
            
            \node at (0, -\R-0.8) {\textbf{(a) Even: $M_0$-$m_{10}$-$M_{10}$-$m_8$}};
        \end{scope}

        % --- COMPONENT 2: ODD UNIVERSE ---
        \begin{scope}[xshift=4cm]
            % Nodes
            \foreach \k [count=\i] in {1,3,5,7,9,11} {
                \pgfmathsetmacro{\angM}{90 - (\i-1)*60}
                \pgfmathsetmacro{\angm}{\angM - 30}
                \node[mynode, fill=blue!10] (M\k) at (\angM:\R) {$M_{\k}$};
                \node[mynode, fill=red!10] (m\k) at (\angm:\R) {$m_{\k}$};
            }
            % Faint background
            \foreach \k in {1,3,5,7,9,11} {
                 \pgfmathsetmacro{\prev}{int(mod(\k-2+12,12))}
                 \pgfmathsetmacro{\next}{int(mod(\k+8,12))}
                 \draw[gray!20, thin] (M\k) -- (m\k);
                 \draw[gray!20, thin] (M\k) -- (m\prev);
                 \draw[gray!20, thin] (M\k) -- (m\next);
            }
            % Highlight Cycle: M3 -> m1 -> M1 -> m11 -> M3
            \draw[blue, very thick] (M3) -- (m1);    % L
            \draw[red, very thick] (m1) -- (M1);     % P
            \draw[blue, very thick] (M1) -- (m11);   % L
            \draw[mygold, very thick] (m11) -- (M3); % R
            
            \node at (0, -\R-0.8) {\textbf{(b) Odd: $M_3$-$m_{1}$-$M_{1}$-$m_{11}$}};
        \end{scope}

        % Legend
        \node[anchor=north] at (0, -\R-1.5) {
            \begin{tabular}{l}
            \textbf{Minimal Cycle ($LPLR$)}: \textcolor{blue}{\textbf{L}} $\to$ \textcolor{red}{\textbf{P}} $\to$ \textcolor{blue}{\textbf{L}} $\to$ \textcolor{mygold}{\textbf{R}}
            \end{tabular}
        };
    \end{tikzpicture}
    \caption{Visualization of the minimal cycles (girth = 4) in the Tritone-Wide system. Unlike the previous system's $LR$-squares, these cycles involve all three transformations ($L$-$P$-$L$-$R$).}
    \label{fig:wide_minimal}
\end{figure}
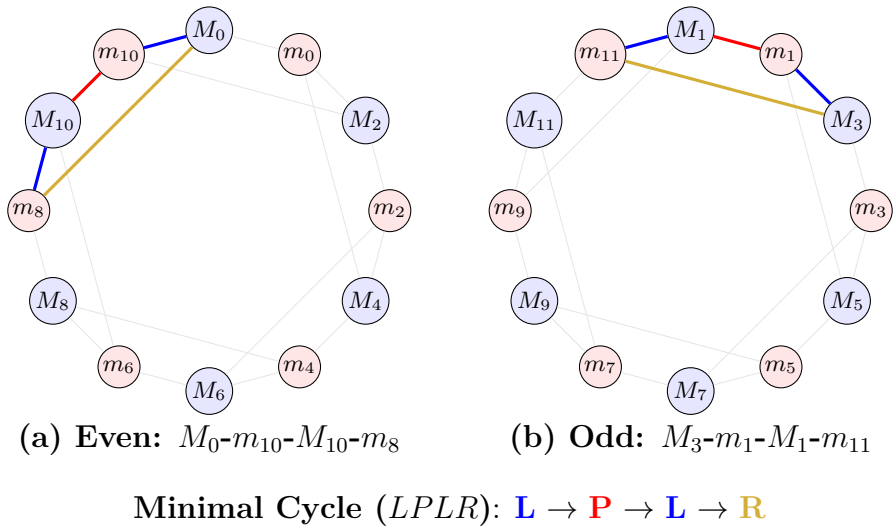

\subsubsection{Hamiltonian Cycles}
Despite the topological split, each universe (Even and Odd) remains Hamiltonian. As with the previous system, the simplest Hamiltonian cycle is formed by alternating $P$ and $L$ transformations along the perimeter of the graph (see perimeter cycles in Figure \ref{fig:wide_disconnected}).  
This confirms that while the system is disconnected globally, its subsystems retain high connectivity and navigability.

\subsection{The Case \texorpdfstring{$\Delta=3$}{Delta=3}: The ``Equally Spaced'' System \texorpdfstring{$(t,s)=(6,3)$}{t=6, s=3}}

We now arrive at the most symmetrical case in our study, defined by the generator $q=9$ (a Major Sixth) partitioned into a Tritone ($t=6$) and a Minor Third ($s=3$). The structural parameter is $\Delta = 6 - 3 = 3$, identical to the ``Wide'' system discussed previously, yet the resulting topology is distinct.

\subsubsection{Explicit Enumeration and the Discovery of Identity}
The Major chord $M_k$ is defined by the set $\{k, k+6, k+9\}$, and the Minor chord $m_k$ by the set $\{k, k+3, k+9\}$. Both are subsets of the diminished seventh chord. To inspect the structure, we list the pitch sets explicitly in Figure \ref{fig:chords_delta3_equal_explicit}.

\begin{figure}[H]
\centering
\scriptsize
\begin{minipage}{0.45\textwidth}
\centering
\textbf{Major Chords ($M_k$)}
\begin{tabular}{ccc}
\toprule
Name & Root & \textbf{Set} \\
\midrule
$M_0$ & 0 & \textbf{\{0, 6, 9\}} \\
$M_1$ & 1 & \{1, 7, 10\} \\
$M_2$ & 2 & \{2, 8, 11\} \\
$M_3$ & 3 & \{3, 9, 0\} \\
$M_4$ & 4 & \{4, 10, 1\} \\
$M_5$ & 5 & \{5, 11, 2\} \\
$M_6$ & 6 & \{6, 0, 3\} \\
$M_7$ & 7 & \{7, 1, 4\} \\
$M_8$ & 8 & \{8, 2, 5\} \\
$M_9$ & 9 & \{9, 3, 6\} \\
$M_{10}$ & 10 & \{10, 4, 7\} \\
$M_{11}$ & 11 & \{11, 5, 8\} \\
\bottomrule
\end{tabular}
\end{minipage}
\hfill
\begin{minipage}{0.45\textwidth}
\centering
\textbf{Minor Chords ($m_k$)}
\begin{tabular}{ccc}
\toprule
Name & Root & \textbf{Set} \\
\midrule
$m_0$ & 0 & \{0, 3, 9\} \\
$m_1$ & 1 & \{1, 4, 10\} \\
$m_2$ & 2 & \{2, 5, 11\} \\
$m_3$ & 3 & \{3, 6, 0\} \\
$m_4$ & 4 & \{4, 7, 1\} \\
$m_5$ & 5 & \{5, 8, 2\} \\
$m_6$ & 6 & \{6, 9, 3\} \\
$m_7$ & 7 & \{7, 10, 4\} \\
$m_8$ & 8 & \{8, 11, 5\} \\
$m_9$ & 9 & \textbf{\{9, 0, 6\}} \\
$m_{10}$ & 10 & \{10, 1, 7\} \\
$m_{11}$ & 11 & \{11, 2, 8\} \\
\bottomrule
\end{tabular}
\end{minipage}
\caption{Explicit pitch sets of the ``Equally Spaced'' system ($q=9$). Note the highlighted entries: $M_0$ and $m_9$ are identical sets.}
\label{fig:chords_delta3_equal_explicit}
\end{figure}

\subsubsection{Theorem of Modal Degeneracy}
Just as in the $(5,2)$ system, inspection reveals a collision of sets.
\begin{itemize}
    \item $M_0 = \{0, 6, 9\}$
    \item $m_9 = \{9, 0, 6\}$
\end{itemize}
This identity ($L$-transformation) is general.

\begin{theorem}[Modal Identity in $q=9$]
In the 12-TET system with $(t=6, s=3)$, the set of pitch classes forming a major chord $M_k$ is identical to the set forming the minor chord $m_{k-3}$ (or $m_{k+9}$).
\[ M_k \equiv m_{k-3} \pmod{12}. \]
\end{theorem}

\begin{proof}
$M_k = \{k, k+6, k+9\}$.
$m_{k-3} = \{(k-3), (k-3)+3, (k-3)+9\} = \{k-3, k, k+6\}$.
Since $k-3 \equiv k+9 \pmod{12}$, the sets are identical.
\end{proof}

\subsubsection{Musical Interpretation of Modal Degeneracy}
This degeneracy creates an ambiguity similar to the one observed in the $(5,2)$ case, but even more symmetric due to the properties of the diminished seventh chord.
\begin{itemize}
    \item **The Diminished Subset:** Every chord in this universe is a subset of size 3 of a Diminished Seventh chord (size 4). Specifically, $M_k$ is the diminished triad with the ``missing'' note at $k+3$, while $m_k$ is the diminished triad with the missing note at $k+6$.
    \item **Enharmonic Pivot:** The $L$-transformation corresponds to shifting the root by a Minor Third ($-3$). Since the chord is symmetric, this shift is physically imperceptible without context.
\end{itemize}

\subsubsection{The Degenerate Tonnetz and the Six Universes}
We establish the connectivity. The transformations are:
\begin{itemize}
    \item **Parallel ($P$):** Preserves $q=9$. $M_k \leftrightarrow m_k$.
    \item **Relative ($R$):** Preserves $t=6$. $M_k \leftrightarrow m_{k+6}$.
    \item **Leading-Tone ($L$):** Preserves $s=3$. $M_k \leftrightarrow m_{k-3}$.
\end{itemize}
Because $M_k \equiv m_{k-3}$, the $L$ transformation acts as an identity loop in the strict set-theoretical graph.

Tables \ref{tab:neighbors_M_6_3} and \ref{tab:neighbors_m_6_3} list the neighbors. The $L$ column is crossed out.

\begin{table}[H]
    \centering
    \scriptsize
    \renewcommand{\arraystretch}{1.3}
    
    % LEFT TABLE: Major Chords
    \begin{minipage}[t]{0.48\textwidth}
        \centering
        \begin{tabular}{|c||c|c|c|}
            \hline
            \textbf{Chord} & \textbf{P-neigh.} & \textbf{L-neigh.} & \textbf{R-neigh.} \\[-0.5ex]
             & $(m_k)$ & $(m_{k-3})$ & $(m_{k+6})$ \\
            \hline
            $M_0$ & $m_0$ & \tikz[baseline=-3pt]{\node[inner sep=1pt] (A) {$m_9$}; \draw[red, thick] (A.north west) -- (A.south east) (A.north east) -- (A.south west);} & $m_6$ \\
            $M_1$ & $m_1$ & \tikz[baseline=-3pt]{\node[inner sep=1pt] (A) {$m_{10}$}; \draw[red, thick] (A.north west) -- (A.south east) (A.north east) -- (A.south west);} & $m_7$ \\
            $M_2$ & $m_2$ & \tikz[baseline=-3pt]{\node[inner sep=1pt] (A) {$m_{11}$}; \draw[red, thick] (A.north west) -- (A.south east) (A.north east) -- (A.south west);} & $m_8$ \\
            $M_3$ & $m_3$ & \tikz[baseline=-3pt]{\node[inner sep=1pt] (A) {$m_0$}; \draw[red, thick] (A.north west) -- (A.south east) (A.north east) -- (A.south west);} & $m_9$ \\
            $M_4$ & $m_4$ & \tikz[baseline=-3pt]{\node[inner sep=1pt] (A) {$m_1$}; \draw[red, thick] (A.north west) -- (A.south east) (A.north east) -- (A.south west);} & $m_{10}$ \\
            $M_5$ & $m_5$ & \tikz[baseline=-3pt]{\node[inner sep=1pt] (A) {$m_2$}; \draw[red, thick] (A.north west) -- (A.south east) (A.north east) -- (A.south west);} & $m_{11}$ \\
            $M_6$ & $m_6$ & \tikz[baseline=-3pt]{\node[inner sep=1pt] (A) {$m_3$}; \draw[red, thick] (A.north west) -- (A.south east) (A.north east) -- (A.south west);} & $m_0$ \\
            $M_7$ & $m_7$ & \tikz[baseline=-3pt]{\node[inner sep=1pt] (A) {$m_4$}; \draw[red, thick] (A.north west) -- (A.south east) (A.north east) -- (A.south west);} & $m_1$ \\
            $M_8$ & $m_8$ & \tikz[baseline=-3pt]{\node[inner sep=1pt] (A) {$m_5$}; \draw[red, thick] (A.north west) -- (A.south east) (A.north east) -- (A.south west);} & $m_2$ \\
            $M_9$ & $m_9$ & \tikz[baseline=-3pt]{\node[inner sep=1pt] (A) {$m_6$}; \draw[red, thick] (A.north west) -- (A.south east) (A.north east) -- (A.south west);} & $m_3$ \\
            $M_{10}$ & $m_{10}$ & \tikz[baseline=-3pt]{\node[inner sep=1pt] (A) {$m_7$}; \draw[red, thick] (A.north west) -- (A.south east) (A.north east) -- (A.south west);} & $m_4$ \\
            $M_{11}$ & $m_{11}$ & \tikz[baseline=-3pt]{\node[inner sep=1pt] (A) {$m_8$}; \draw[red, thick] (A.north west) -- (A.south east) (A.north east) -- (A.south west);} & $m_5$ \\
            \hline
        \end{tabular}
        \caption{Neighbors of Major chords ($q=9$). The $L$ column is crossed out.}
        \label{tab:neighbors_M_6_3}
    \end{minipage}
    \hfill
    % RIGHT TABLE: Minor Chords
    \begin{minipage}[t]{0.48\textwidth}
        \centering
        \begin{tabular}{|c||c|c|c|}
            \hline
            \textbf{Chord} & \textbf{P-neigh.} & \textbf{L-neigh.} & \textbf{R-neigh.} \\[-0.5ex]
             & $(M_k)$ & $(M_{k+3})$ & $(M_{k+6})$ \\
            \hline
            $m_0$ & $M_0$ & \tikz[baseline=-3pt]{\node[inner sep=1pt] (A) {$M_3$}; \draw[red, thick] (A.north west) -- (A.south east) (A.north east) -- (A.south west);} & $M_6$ \\
            $m_1$ & $M_1$ & \tikz[baseline=-3pt]{\node[inner sep=1pt] (A) {$M_4$}; \draw[red, thick] (A.north west) -- (A.south east) (A.north east) -- (A.south west);} & $M_7$ \\
            $m_2$ & $M_2$ & \tikz[baseline=-3pt]{\node[inner sep=1pt] (A) {$M_5$}; \draw[red, thick] (A.north west) -- (A.south east) (A.north east) -- (A.south west);} & $M_8$ \\
            $m_3$ & $M_3$ & \tikz[baseline=-3pt]{\node[inner sep=1pt] (A) {$M_6$}; \draw[red, thick] (A.north west) -- (A.south east) (A.north east) -- (A.south west);} & $M_9$ \\
            $m_4$ & $M_4$ & \tikz[baseline=-3pt]{\node[inner sep=1pt] (A) {$M_7$}; \draw[red, thick] (A.north west) -- (A.south east) (A.north east) -- (A.south west);} & $M_{10}$ \\
            $m_5$ & $M_5$ & \tikz[baseline=-3pt]{\node[inner sep=1pt] (A) {$M_8$}; \draw[red, thick] (A.north west) -- (A.south east) (A.north east) -- (A.south west);} & $M_{11}$ \\
            $m_6$ & $M_6$ & \tikz[baseline=-3pt]{\node[inner sep=1pt] (A) {$M_9$}; \draw[red, thick] (A.north west) -- (A.south east) (A.north east) -- (A.south west);} & $M_0$ \\
            $m_7$ & $M_7$ & \tikz[baseline=-3pt]{\node[inner sep=1pt] (A) {$M_{10}$}; \draw[red, thick] (A.north west) -- (A.south east) (A.north east) -- (A.south west);} & $M_1$ \\
            $m_8$ & $M_8$ & \tikz[baseline=-3pt]{\node[inner sep=1pt] (A) {$M_{11}$}; \draw[red, thick] (A.north west) -- (A.south east) (A.north east) -- (A.south west);} & $M_2$ \\
            $m_9$ & $M_9$ & \tikz[baseline=-3pt]{\node[inner sep=1pt] (A) {$M_0$}; \draw[red, thick] (A.north west) -- (A.south east) (A.north east) -- (A.south west);} & $M_3$ \\
            $m_{10}$ & $M_{10}$ & \tikz[baseline=-3pt]{\node[inner sep=1pt] (A) {$M_1$}; \draw[red, thick] (A.north west) -- (A.south east) (A.north east) -- (A.south west);} & $M_4$ \\
            $m_{11}$ & $M_{11}$ & \tikz[baseline=-3pt]{\node[inner sep=1pt] (A) {$M_2$}; \draw[red, thick] (A.north west) -- (A.south east) (A.north east) -- (A.south west);} & $M_5$ \\
            \hline
        \end{tabular}
        \caption{Neighbors of Minor chords ($q=9$). The $L$ column is crossed out.}
        \label{tab:neighbors_m_6_3}
    \end{minipage}
\end{table}

The result of this degeneracy is far more severe than in the $\Delta=3$ case. Without the $L$-connection, the graph relies solely on $P$ and $R$.
\begin{itemize}
    \item $P$ connects $M_k \to m_k$.
    \item $R$ connects $m_k \to M_{k+6}$ (since $M_{k-6} \equiv M_{k+6}$).
\end{itemize}
This creates the cycle:
\[ M_k \xrightarrow{P} m_k \xrightarrow{R} M_{k+6} \xrightarrow{P} m_{k+6} \xrightarrow{R} M_{k+12} \equiv M_k \]
This is a closed loop of length 4. Since there are 24 functional chords, the universe fractures into **six disjoint components** (or ``universes''), each being a rectangle.

\begin{figure}[H]
    \centering
    \begin{tikzpicture}[scale=0.6, transform shape]
        % --- Styles ---
        \tikzset{majnode/.style={circle, draw=blue!80, fill=blue!5, thick, inner sep=1.5pt, minimum size=0.7cm, font=\scriptsize}}
        \tikzset{minnode/.style={circle, draw=red!80, fill=red!5, thick, inner sep=1.5pt, minimum size=0.7cm, font=\scriptsize}}
        \definecolor{mygold}{RGB}{212, 175, 55}
        
        % --- Layout Parameters ---
        \def\dx{8}  % Column spacing
        \def\dy{-6.0} % Row spacing
        \def\w{1.0}   % Half-width (Short side / P)
        \def\h{1.8}   % Half-height (Long side / R)

        % ================= COLUMN 1: Component 1 (Roots 1, 4, 7, 10) =================
        
        % --- TOP: Universe 1 (m10-M10-m4-M4) ---
        % Start P (Short). Tilted Left (+45). 
        % Sequence CCW from Top: m10 -> M10 -> m4 -> M4
        \begin{scope}[xshift=0cm, yshift=0cm, rotate=45]
            % Coordinates relative to local center
            \coordinate (T) at (\w, \h);   % m10
            \coordinate (L) at (-\w, \h);  % M10
            \coordinate (B) at (-\w, -\h); % m4
            \coordinate (R) at (\w, -\h);  % M4
            
            % Edges
            \draw[red, thick] (T) -- (L);
            \draw[blue, thick] (L) -- (B);
            \draw[red, thick] (B) -- (R);
            \draw[blue, thick] (R) -- (T);
            
            % Edge Labels (Explicitly placed and counter-rotated to be horizontal)
            \node[rotate=-45, font=\tiny, fill=white, inner sep=0.5pt, text=black] at (0, \h) {P};
            \node[rotate=-45, font=\tiny, fill=white, inner sep=0.5pt, text=black] at (-\w, 0) {R};
            \node[rotate=-45, font=\tiny, fill=white, inner sep=0.5pt, text=black] at (0, -\h) {P};
            \node[rotate=-45, font=\tiny, fill=white, inner sep=0.5pt, text=black] at (\w, 0) {R};

            % Nodes (Counter-rotated text)
            \node[minnode, rotate=-45] at (T) {$m_{10}$};
            \node[majnode, rotate=-45] at (L) {$M_{10}$};
            \node[minnode, rotate=-45] at (B) {$m_{4}$};
            \node[majnode, rotate=-45] at (R) {$M_{4}$};
        \end{scope}
        \node[font=\bfseries\scriptsize] at (0, 0) {Universe 1.b};

        % --- BOTTOM: Universe 4 (M1-m7-M7-m1) ---
        % Start R (Long). Tilted Right (-45).
        % Sequence CCW from Top: M1 -> m7 -> M7 -> m1
        \begin{scope}[xshift=0cm, yshift=\dy cm, rotate=-45]
            \coordinate (T) at (-\w, \h);  % M1
            \coordinate (L) at (-\w, -\h); % m7
            \coordinate (B) at (\w, -\h);  % M7
            \coordinate (R) at (\w, \h);   % m1
            
            % Edges
            \draw[blue, thick] (T) -- (L);
            \draw[red, thick] (L) -- (B);
            \draw[blue, thick] (B) -- (R);
            \draw[red, thick] (R) -- (T);

            % Edge Labels (Counter-rotated)
            \node[rotate=45, font=\tiny, fill=white, inner sep=0.5pt, text=black] at (-\w, 0) {R};
            \node[rotate=45, font=\tiny, fill=white, inner sep=0.5pt, text=black] at (0, -\h) {P};
            \node[rotate=45, font=\tiny, fill=white, inner sep=0.5pt, text=black] at (\w, 0) {R};
            \node[rotate=45, font=\tiny, fill=white, inner sep=0.5pt, text=black] at (0, \h) {P};
            
            % Nodes
            \node[majnode, rotate=45] at (T) {$M_{1}$};
            \node[minnode, rotate=45] at (L) {$m_{7}$};
            \node[majnode, rotate=45] at (B) {$M_{7}$};
            \node[minnode, rotate=45] at (R) {$m_{1}$};
        \end{scope}
        \node[font=\bfseries\scriptsize] at (0, \dy 0) {Universe 1.a};
        
        % Label Comp 1
        \node[font=\bfseries\small] at (0, \dy - 3.0) {Component 1};

        % ================= COLUMN 2: Component 0 (Roots 0, 3, 6, 9) =================

        % --- TOP: Universe 3 (m9-M9-m3-M3) ---
        % Start P (Short). Tilted Left (+45).
        \begin{scope}[xshift=\dx cm, yshift=0cm, rotate=45]
            \coordinate (T) at (\w, \h);   % m9
            \coordinate (L) at (-\w, \h);  % M9
            \coordinate (B) at (-\w, -\h); % m3
            \coordinate (R) at (\w, -\h);  % M3
            
            \draw[red, thick] (T) -- (L);
            \draw[blue, thick] (L) -- (B);
            \draw[red, thick] (B) -- (R);
            \draw[blue, thick] (R) -- (T);
            
            % Edge Labels
            \node[rotate=-45, font=\tiny, fill=white, inner sep=0.5pt, text=black] at (0, \h) {P};
            \node[rotate=-45, font=\tiny, fill=white, inner sep=0.5pt, text=black] at (-\w, 0) {R};
            \node[rotate=-45, font=\tiny, fill=white, inner sep=0.5pt, text=black] at (0, -\h) {P};
            \node[rotate=-45, font=\tiny, fill=white, inner sep=0.5pt, text=black] at (\w, 0) {R};

            % Nodes
            \node[minnode, rotate=-45] at (T) {$m_{9}$};
            \node[majnode, rotate=-45] at (L) {$M_{9}$};
            \node[minnode, rotate=-45] at (B) {$m_{3}$};
            \node[majnode, rotate=-45] at (R) {$M_{3}$};
        \end{scope}
        \node[font=\bfseries\scriptsize] at (\dx, 0) {Universe 0.b};

        % --- BOTTOM: Universe 0 (M0-m6-M6-m0) ---
        % Start R (Long). Tilted Right (-45).
        \begin{scope}[xshift=\dx cm, yshift=\dy cm, rotate=-45]
            \coordinate (T) at (-\w, \h);  % M0
            \coordinate (L) at (-\w, -\h); % m6
            \coordinate (B) at (\w, -\h);  % M6
            \coordinate (R) at (\w, \h);   % m0
            
            \draw[blue, thick] (T) -- (L);
            \draw[red, thick] (L) -- (B);
            \draw[blue, thick] (B) -- (R);
            \draw[red, thick] (R) -- (T);
            
            % Edge Labels
            \node[rotate=45, font=\tiny, fill=white, inner sep=0.5pt, text=black] at (-\w, 0) {R};
            \node[rotate=45, font=\tiny, fill=white, inner sep=0.5pt, text=black] at (0, -\h) {P};
            \node[rotate=45, font=\tiny, fill=white, inner sep=0.5pt, text=black] at (\w, 0) {R};
            \node[rotate=45, font=\tiny, fill=white, inner sep=0.5pt, text=black] at (0, \h) {P};

            % Nodes
            \node[majnode, rotate=45] at (T) {$M_{0}$};
            \node[minnode, rotate=45] at (L) {$m_{6}$};
            \node[majnode, rotate=45] at (B) {$M_{6}$};
            \node[minnode, rotate=45] at (R) {$m_{0}$};
        \end{scope}
        \node[font=\bfseries\scriptsize] at (\dx, \dy ) {Universe 0.a};

        % Label Comp 0
        \node[font=\bfseries\small] at (\dx, \dy - 3.0) {Component 0};

        % ================= COLUMN 3: Component 2 (Roots 2, 5, 8, 11) =================

        % --- TOP: Universe 5 (m11-M11-m5-M5) ---
        % Start P (Short). Tilted Left (+45).
        \begin{scope}[xshift=2*\dx cm, yshift=0cm, rotate=45]
            \coordinate (T) at (\w, \h);   % m11
            \coordinate (L) at (-\w, \h);  % M11
            \coordinate (B) at (-\w, -\h); % m5
            \coordinate (R) at (\w, -\h);  % M5
            
            \draw[red, thick] (T) -- (L);
            \draw[blue, thick] (L) -- (B);
            \draw[red, thick] (B) -- (R);
            \draw[blue, thick] (R) -- (T);
            
            % Edge Labels
            \node[rotate=-45, font=\tiny, fill=white, inner sep=0.5pt, text=black] at (0, \h) {P};
            \node[rotate=-45, font=\tiny, fill=white, inner sep=0.5pt, text=black] at (-\w, 0) {R};
            \node[rotate=-45, font=\tiny, fill=white, inner sep=0.5pt, text=black] at (0, -\h) {P};
            \node[rotate=-45, font=\tiny, fill=white, inner sep=0.5pt, text=black] at (\w, 0) {R};

            % Nodes
            \node[minnode, rotate=-45] at (T) {$m_{11}$};
            \node[majnode, rotate=-45] at (L) {$M_{11}$};
            \node[minnode, rotate=-45] at (B) {$m_{5}$};
            \node[majnode, rotate=-45] at (R) {$M_{5}$};
        \end{scope}
        \node[font=\bfseries\scriptsize] at (2*\dx, 0) {Universe 2.b};

        % --- BOTTOM: Universe 2 (M2-m8-M8-m2) ---
        % Start R (Long). Tilted Right (-45).
        \begin{scope}[xshift=2*\dx cm, yshift=\dy cm, rotate=-45]
            \coordinate (T) at (-\w, \h);  % M2
            \coordinate (L) at (-\w, -\h); % m8
            \coordinate (B) at (\w, -\h);  % M8
            \coordinate (R) at (\w, \h);   % m2
            
            \draw[blue, thick] (T) -- (L);
            \draw[red, thick] (L) -- (B);
            \draw[blue, thick] (B) -- (R);
            \draw[red, thick] (R) -- (T);
            
            % Edge Labels
            \node[rotate=45, font=\tiny, fill=white, inner sep=0.5pt, text=black] at (-\w, 0) {R};
            \node[rotate=45, font=\tiny, fill=white, inner sep=0.5pt, text=black] at (0, -\h) {P};
            \node[rotate=45, font=\tiny, fill=white, inner sep=0.5pt, text=black] at (\w, 0) {R};
            \node[rotate=45, font=\tiny, fill=white, inner sep=0.5pt, text=black] at (0, \h) {P};

            % Nodes
            \node[majnode, rotate=45] at (T) {$M_{2}$};
            \node[minnode, rotate=45] at (L) {$m_{8}$};
            \node[majnode, rotate=45] at (B) {$M_{8}$};
            \node[minnode, rotate=45] at (R) {$m_{2}$};
        \end{scope}
        \node[font=\bfseries\scriptsize] at (2*\dx, \dy 0) {Universe 2.a};

        % Label Comp 2
        \node[font=\bfseries\small] at (2*\dx, \dy - 3.0) {Component 2};

    \end{tikzpicture}
    \caption{The degenerate geometry of the $(t=6, s=3)$ system. The harmonic space fractures into six disjoint rectangles.}
    \label{fig:six_universes_rect_final_corrected}
\end{figure}
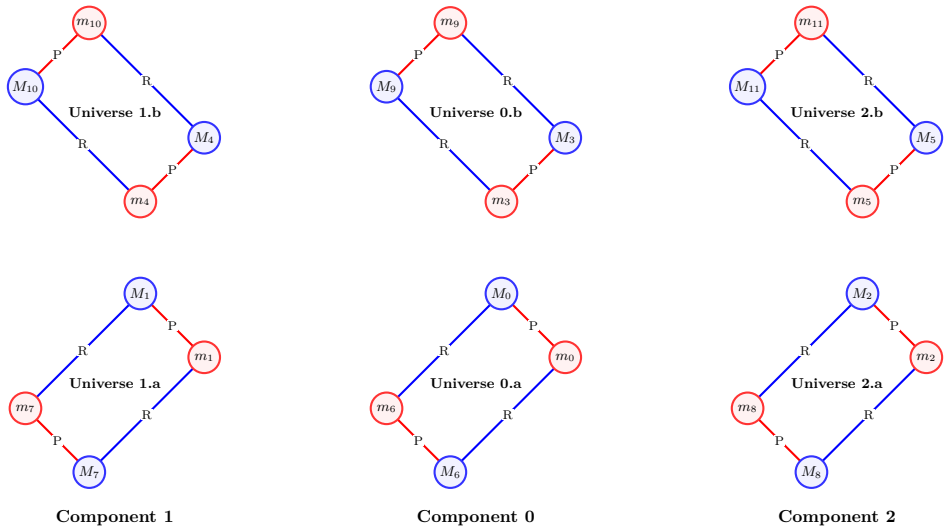

\subsubsection{Restoring the Connection}
To recover the full harmonic system, we must "uncross" the $L$-column in the neighbor tables. Restoring the $L$-transformation ($L(M_k) = m_{k-3}$) creates a bridge between these previously isolated universes. 

For example, $L$ connects $M_0$ (from **Universe 0.a**) to $m_9$ (from **Universe 0.b**). Since both $0$ and $9$ are multiples of 3, the restoration of $L$ merges the six small rectangles into larger structures based on root congruences modulo 3. This leads us directly to the tri-partitioned topology.

\begin{table}[H]
\centering
\scriptsize
\renewcommand{\arraystretch}{1.2}
\begin{minipage}{0.48\textwidth}
\centering
\begin{tabular}{|c||c|c|c|}
\hline
\textbf{Chord} ($M_k$) & \textbf{P} ($m_k$) & \textbf{L} ($m_{k-3}$) & \textbf{R} ($m_{k+6}$) \\
\hline
$M_0$ & $m_0$ & $m_{9}$ & $m_6$ \\
$M_1$ & $m_1$ & $m_{10}$ & $m_7$ \\
$M_2$ & $m_2$ & $m_{11}$ & $m_8$ \\
$M_3$ & $m_3$ & $m_{0}$ & $m_9$ \\
$M_4$ & $m_4$ & $m_{1}$ & $m_{10}$ \\
$M_5$ & $m_5$ & $m_{2}$ & $m_{11}$ \\
$M_6$ & $m_6$ & $m_{3}$ & $m_{0}$ \\
$M_7$ & $m_7$ & $m_{4}$ & $m_{1}$ \\
$M_8$ & $m_8$ & $m_{5}$ & $m_{2}$ \\
$M_9$ & $m_9$ & $m_{6}$ & $m_{3}$ \\
$M_{10}$ & $m_{7}$ & $m_{8}$ & $m_{4}$ \\
$M_{11}$ & $m_{8}$ & $m_{9}$ & $m_{5}$ \\
\hline
\end{tabular}
\caption{Voice Leading: Major Chords $M_k$.}
%\label{tab:tritone_major_vl}
\end{minipage}
\hfill
\begin{minipage}{0.48\textwidth}
\centering
\begin{tabular}{|c||c|c|c|}
\hline
\textbf{Source} ($m_k$) & \textbf{P} ($M_k$) & \textbf{L} ($M_{k+3}$) & \textbf{R} ($M_{k-6}$) \\
\hline
$m_0$ & $M_0$ & $M_3$ & $M_6$ \\
$m_1$ & $M_1$ & $M_4$ & $M_7$ \\
$m_2$ & $M_2$ & $M_5$ & $M_{8}$ \\
$m_3$ & $M_3$ & $M_6$ & $M_{9}$ \\
$m_4$ & $M_4$ & $M_7$ & $M_{10}$ \\
$m_5$ & $M_5$ & $M_8$ & $M_{11}$ \\
$m_6$ & $M_6$ & $M_9$ & $M_{0}$ \\
$m_7$ & $M_7$ & $M_{10}$ & $M_{1}$ \\
$m_8$ & $M_8$ & $M_{11}$ & $M_{2}$ \\
$m_9$ & $M_9$ & $M_{0}$ & $M_{3}$ \\
$m_{10}$ & $M_{10}$ & $M_{1}$ & $M_{4}$ \\
$m_{11}$ & $M_{11}$ & $M_{2}$ & $M_{5}$ \\
\hline
\end{tabular}
\caption{Voice Leading: Minor Chords $m_k$.}
%\label{tab:tritone_minor_vl}
\end{minipage}
\end{table}

\subsubsection{Tri-Partition of the Graph}
Since all root shifts (0, -3, +6) are multiples of 3, the Levi graph fractures into \textbf{three disjoint components}, each corresponding to a residue class modulo 3.
\begin{itemize}
    \item \textbf{Component 0:} Roots $\{0, 3, 6, 9\}$ (The ``Diminished-0'' Universe)
    \item \textbf{Component 1:} Roots $\{1, 4, 7, 10\}$ (The ``Diminished-1'' Universe)
    \item \textbf{Component 2:} Roots $\{2, 5, 8, 11\}$ (The ``Diminished-2'' Universe)
\end{itemize}
Each component contains 8 nodes (4 Major, 4 Minor) and forms a highly symmetric graph.
%(specifically, a Möbius ladder structure $V_8$).

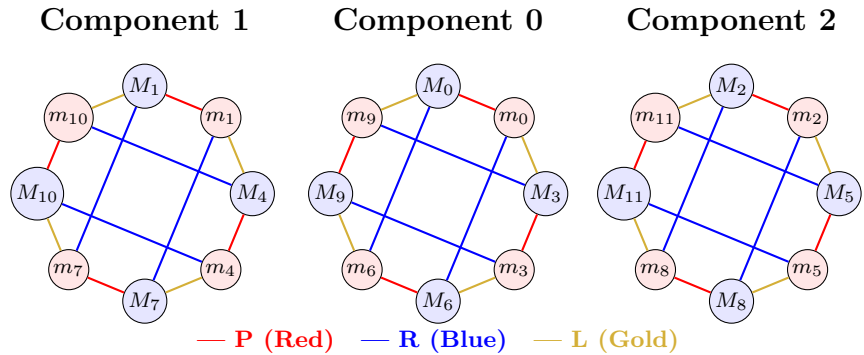
\begin{figure}[H]
    \centering
    \begin{tikzpicture}[scale=0.65]
        \tikzset{mynode/.style={circle, draw, thin, inner sep=1pt, font=\tiny}}
        \definecolor{mygold}{RGB}{212, 175, 55}

        % --- DRAW COMPONENT 0 (Center) ---
        \begin{scope}[xshift=0cm]
            \node at (0, 3.5) {\textbf{Component 0}};
            \def\R{2.2}
            % Sequence in circle: M0, m0, M3, m3, M6, m6, M9, m9
            \foreach \k [count=\i] in {0, 3, 6, 9} {
                % Angle for Mk
                \pgfmathsetmacro{\angM}{90 - (\i-1)*90}
                % Angle for mk
                \pgfmathsetmacro{\angm}{\angM - 45}
                
                \node[mynode, fill=blue!10] (M\k) at (\angM:\R) {$M_{\k}$};
                \node[mynode, fill=red!10] (m\k) at (\angm:\R) {$m_{\k}$};
            }
            % Edges
            \foreach \k in {0, 3, 6, 9} {
                % P (Red) M_k -- m_k
                \draw[red, thick] (M\k) -- (m\k);
                % L (Blue) M_k -- m_{k-3}. Note: m_{k-3} is previous m
                \pgfmathsetmacro{\prev}{int(mod(\k-3+12,12))}
                \draw[mygold, thick] (M\k) -- (m\prev);
                % R (Gold) M_k -- m_{k+6}. Cross connection.
                \pgfmathsetmacro{\cross}{int(mod(\k+6,12))}
                \draw[blue, thick] (M\k) -- (m\cross);
            }
        \end{scope}

        % --- DRAW COMPONENT 1 (Left) ---
        \begin{scope}[xshift=-6cm]
            \node at (0, 3.5) {\textbf{Component 1}};
            \def\R{2.2}
            \foreach \k [count=\i] in {1, 4, 7, 10} {
                \pgfmathsetmacro{\angM}{90 - (\i-1)*90}
                \pgfmathsetmacro{\angm}{\angM - 45}
                \node[mynode, fill=blue!10] (M\k) at (\angM:\R) {$M_{\k}$};
                \node[mynode, fill=red!10] (m\k) at (\angm:\R) {$m_{\k}$};
            }
            \foreach \k in {1, 4, 7, 10} {
                \draw[red, thick] (M\k) -- (m\k);
                \pgfmathsetmacro{\prev}{int(mod(\k-3+12,12))}
                \draw[mygold, thick] (M\k) -- (m\prev);
                \pgfmathsetmacro{\cross}{int(mod(\k+6,12))}
                \draw[blue, thick] (M\k) -- (m\cross);
            }
        \end{scope}

        % --- DRAW COMPONENT 2 (Right) ---
        \begin{scope}[xshift=6cm]
            \node at (0, 3.5) {\textbf{Component 2}};
            \def\R{2.2}
            \foreach \k [count=\i] in {2, 5, 8, 11} {
                \pgfmathsetmacro{\angM}{90 - (\i-1)*90}
                \pgfmathsetmacro{\angm}{\angM - 45}
                \node[mynode, fill=blue!10] (M\k) at (\angM:\R) {$M_{\k}$};
                \node[mynode, fill=red!10] (m\k) at (\angm:\R) {$m_{\k}$};
            }
            \foreach \k in {2, 5, 8, 11} {
                \draw[red, thick] (M\k) -- (m\k);
                \pgfmathsetmacro{\prev}{int(mod(\k-3+12,12))}
                \draw[mygold, thick] (M\k) -- (m\prev);
                \pgfmathsetmacro{\cross}{int(mod(\k+6,12))}
                \draw[blue, thick] (M\k) -- (m\cross);
            }
        \end{scope}
        
        % Legend
        \node[anchor=north] at (0, -2.5) {
            \scriptsize
            \begin{tabular}{ccc}
            \textcolor{red}{\textbf{--- P (Red)}} & \textcolor{blue}{\textbf{--- R (Blue)}} & \textcolor{mygold}{\textbf{--- L (Gold)}}
            \end{tabular}
        };
    \end{tikzpicture}
    \caption{The Tri-Partitioned Levi Graph of the $(9,6,3)$ system. The graph splits into three isomorphic components, each containing 8 chords whose roots form a diminished seventh chord. The topology of each component is an octagonal ring formed by alternating $P$ and $L$ edges, with $R$ edges connecting opposite sides.}
    \label{fig:equal_split}
\end{figure}

\subsubsection{Minimal Cycles and Girth}
Within any single component, the girth is 4. This is defined by the rectangle formed by alternating $P$ and $R$ transformations:
\[ M_k \xrightarrow{P} m_k \xrightarrow{R} M_{k-6} \xrightarrow{P} m_{k-6} \xrightarrow{R} M_k \]
Since $k-12 \equiv k$, this loop closes in 4 steps. The $L$ transformation does not participate in 4-cycles, but contributes to form the perimeter.

\begin{figure}[H]
    \centering
    \begin{tikzpicture}[scale=0.8, >=stealth]
        \tikzset{mynode/.style={circle, draw, thin, inner sep=1pt, font=\scriptsize}}
        \definecolor{mygold}{RGB}{212, 175, 55}
        
        % Illustrate the PR Square on a subset of Component 0
        % Nodes: M0, m0, M6, m6.
        % Positions: Square
        \node[mynode, fill=blue!10] (M0) at (0, 2) {$M_0$};
        \node[mynode, fill=red!10] (m0) at (2, 2) {$m_0$};
        \node[mynode, fill=blue!10] (M6) at (2, -2) {$M_6$};
        \node[mynode, fill=red!10] (m6) at (0, -2) {$m_6$};
        
        % Edges
        \draw[red, very thick] (M0) -- (m0) node[midway, above] {$P$};
        \draw[blue, very thick] (m0) -- (M6) node[midway, right] {$R$};
        \draw[red, very thick] (M6) -- (m6) node[midway, below] {$P$};
        \draw[blue, very thick] (m6) -- (M0) node[midway, left] {$R$};
        
        \node at (1, -3.5) {\textbf{The ${\color{red}P}{\color{blue}R}$-Rectangle}};
    \end{tikzpicture}
    \caption{The minimal cycle in the $(9,6,3)$ system. The $P$ and $R$ transformations create a commutative square (or digon in pitch-class space) that defines the graph's girth.}
    \label{fig:equal_minimal}
\end{figure}

\subsubsection{Hamiltonicity}
Each of the three components is trivially Hamiltonian. The alternating $PL$-cycle traces the perimeter of the component, visiting all 8 nodes in the sequence:
\[ M_k \xrightarrow{P} m_k \xrightarrow{L} M_{k+3} \xrightarrow{L} m_{k+3} \xrightarrow{L} M_{k+6} \xrightarrow{L} m_{k+6} \xrightarrow{L} M_{k+9} \xrightarrow{L} m_{k+9} \xrightarrow{L} M_{k} .\]
For Component 0, this cycle is $M_0 \to m_0 \to M_3 \to m_3 \to M_6 \to m_6 \to M_9 \to m_9 \to M_0$.

\section{Levi Graphs and Configuration Theory}\label{sec8} 

In this section, we provide definitions for the geometric and graph-theoretic notions used intuitively starting from Section \ref{sec43}. As the earlier exposition was designed to be accessible to musicians we sacrificed mathematical rigor there; here we formalize these structures to place our approach to 12-TET harmonic systems on a firm mathematical footing.

To ensure clarity, we analyze detailed examples of Levi graphs $L_n(a,b,c)$, specifically focusing on the toy model $L_6(a,b,c)$ and our primary object of study, $L_{12}(a,b,c)$. The subsequent subsections provide a full classification of the cyclic Levi graphs of $L_{12}(a,b,c)$, demonstrating that there exist exactly nine non-isomorphic classes, from which eight can be given an interpretation of a 12TET musical system. Crucially, every specific harmonic system encountered in previous Sections corresponds to one of these eight fundamental classes. This self--contained expository part is an adaptation from our earlier work \cite{Nurowski10TET} on 10TET systems. 

The classification of Levi graphs $L_{12}(a,b,c)$ presented below serves as the theoretical foundation for the final part of this work. In \textbf{Section \ref{sec9}}, we will utilize these results to characterize all musical 12-TET systems resulting from our generation scheme $t+s=q$ subject to the difference constraint $t-s=\Delta$.

\subsection{Connection Sets and Levi Graphs}

\begin{definition}[Offset]
Let $n$ be a positive integer. An \textbf{offset} (or connection set) $S$ is defined as a three-element subset of the ring of integers modulo $n$, denoted by $\mathbb{Z}_n$:
\[
S = \{a, b, c\} \subset \mathbb{Z}_n, \quad |S|=3.
\]
\end{definition}

\begin{definition}[Cyclic Cubic Bipartite Levi Graph]
Given an offset $S = \{a, b, c\}$, the \textbf{Levi graph} $L_n(S)$ is a bipartite graph with vertex set $V = \mathcal{P} \cup \mathcal{L}$, where:
\[
\mathcal{P} = \{P_0, P_1, \dots, P_{n-1}\} \quad \text{and} \quad \mathcal{L} = \{L_0, L_1, \dots, L_{n-1}\}.
\]
The edge set $E$ is defined constructively using all elements of $S$:
\[
E = \{ (P_i, L_{i+s}) \mid i \in \mathbb{Z}_n, \, \forall s \in S \}.
\]
In other words, every point $P_i$ is connected to the three lines $L_{i+a}, L_{i+b}, L_{i+c}$.
\end{definition}

Historically, the set $\mathcal P$ in a graph $L_n(S)$ is called the set of its \textbf{points}, and the set $\mathcal L$ is called the set of its \textbf{lines}.

\subsection{Cycles and the Girth}
A $k\geq 4$ passage $P_{i_0}\to L_{i_1}\to P_{i_2}\to L_{i_3}\to\dots\to L_{i_{k-1}}\to P_{i_k}=P_{i_0}$ in a graph $L_n(S)$ along a loop through its edges, is called a \textbf{cycle of length} $k$. The minimal length of all cycles in the graph is called its \textbf{girth}.

If a graph $L_n(S)$ has a 4--cycle (girth 4), then two points from $\mathcal P$ are connected with two distinct lines from $\mathcal L$. This forbids a geometric interpretation for such a graph as a configuration of straight lines and points in $\mathbb R^2$, for in Euclidean geometry two points may lie on at most one straight line. So Levi graphs $L_n(S)$ with \textbf{girth equal to four} can \textbf{not} form \textbf{a geometric configuration}.

\subsection{Isomorphism and Symmetry}

\begin{definition}[Bipartite Graph Isomorphism and Symmetry]
Two Levi graphs $L_n(S)$ and $L_n(S')$ are \textbf{isomorphic}, denoted $L_n(S) \cong L_n(S')$, if there exists a bijection $\phi: V(L_n(S)) \to V(L_n(S'))$ which preserves adjacency and the bipartite structure. Specifically, $\phi$ must satisfy:
\[
u \sim v \text{ in } L_n(S) \iff \phi(u) \sim \phi(v) \text{ in } L_n(S'),
\]
and one of the following two conditions holds:
\begin{enumerate}
    \item \textbf{Structure Preserving:} $\phi(\mathcal{P}) = \mathcal{P}'$ and $\phi(\mathcal{L}) = \mathcal{L}'$.
    \item \textbf{Structure Swapping:} $\phi(\mathcal{P}) = \mathcal{L}'$ and $\phi(\mathcal{L}) = \mathcal{P}'$.
\end{enumerate}
An isomorphism $\phi$ of a graph $G=L_n(S)$ to itself is called a \textbf{symmetry} (or automorphism). The set $\mathrm{Aut}(G)$ of all symmetries forms the \textbf{symmetry group} of $L_n(S)$.
\end{definition}

\subsection{Affine Isomorphisms and Canonical Offsets}

\subsubsection{The Action of the Affine Group}

We consider the group of affine transformations on $\mathbb{Z}_n$, denoted by $\mathrm{Aff}(\mathbb{Z}_n)$. An element $g \in \mathrm{Aff}(\mathbb{Z}_n)$ is defined by a pair $(\alpha, k)$ where $\alpha \in \mathbb{Z}_n^*$ and $k \in \mathbb{Z}_n$, acting on the ring elements as:
\[
g(x) = \alpha x + k \pmod n.
\]
This group acts naturally on the set of all offsets (subsets of size 3) by applying the transformation element-wise:
\[
g \cdot \{s_1, s_2, s_3\} = \{ \alpha s_1 + k, \alpha s_2 + k, \alpha s_3 + k \}.
\]

\begin{proposition}[Affine Isomorphism]
If two offsets $S$ and $S'$ belong to the same orbit under the action of $\mathrm{Aff}(\mathbb{Z}_n)$ (i.e., $S' = \alpha S + k$), then the graphs $L_n(S)$ and $L_n(S')$ are isomorphic. Such an isomorphism is called an \textbf{affine isomorphism}.
\end{proposition}

\begin{proof}
We construct the vertex mapping $\phi: V(L_n(S)) \to V(L_n(S'))$ as follows:
\begin{align*}
\phi(P_i) &= P'_{\alpha i}, \\
\phi(L_j) &= L'_{\alpha j + k}.
\end{align*}
Consider an edge $(P_i, L_{i+s})$ in $L_n(S)$ where $s \in S$.
The image of $P_i$ is $P'_{\alpha i}$.
The image of $L_{i+s}$ is $L'_{\alpha(i+s) + k} = L'_{\alpha i + (\alpha s + k)}$.
Let $s' = \alpha s + k$. By definition, $s' \in S'$.
Thus, the edge maps to $(P'_{\alpha i}, L'_{\alpha i + s'})$, which is a valid edge in $L_n(S')$. Since $\alpha$ is invertible, this mapping is a bijection and explicitly preserves the bipartition ($\mathcal{P} \to \mathcal{P}'$, $\mathcal{L} \to \mathcal{L}'$).
\end{proof}

\subsubsection{Algorithm: Orbit Enumeration}

To classify all Levi graphs up to affine isomorphism, we must partition the set of all normalized offsets into disjoint orbits. We employ a procedure that generates the full set of normalized members for each orbit and identifies the canonical representative:

\begin{enumerate}
    \item Let $\mathcal{N}$ be the set of all normalized offsets (offsets containing 0) in $\mathbb{Z}_n$, sorted in lexicographical order.
    \item Initialize a set of visited offsets $\mathcal{V} = \emptyset$ and a list of orbit classes $\mathcal{C}$.
    \item Iterate through each offset $S \in \mathcal{N}$:
    \begin{itemize}
        \item If $S \in \mathcal{V}$, skip it (it belongs to an orbit already generated).
        \item If $S \notin \mathcal{V}$, then $S$ is the \textbf{canonical representative} of a new orbit.
        \item \textbf{Generate the full affine orbit} $\mathcal{O}_S$ containing all normalized equivalents of $S$:
        \begin{enumerate}
             \item Initialize the orbit set $\mathcal{O}_S = \emptyset$.
             \item For every unit $\alpha \in \mathbb{Z}_n^*$:
             \begin{enumerate}
                  \item Compute the scaled set $S_\alpha = \{0, \alpha x, \alpha y\}$.
                  \item Generate the three normalized forms of $S_\alpha$ by shifting each of its elements to 0:
                  \begin{itemize}
                      \item The set $S_\alpha$ itself (already contains 0).
                      \item The set shifted by $-\alpha x$: $\{0, -\alpha x, \alpha y - \alpha x\}$.
                      \item The set shifted by $-\alpha y$: $\{0, -\alpha y, \alpha x - \alpha y\}$.
                  \end{itemize}
                  \item Sort the elements of each form to ensure standard representation (increasing order) and add them to $\mathcal{O}_S$.
             \end{enumerate}
        \end{enumerate}
        \item Record the class as $(S, \mathcal{O}_S)$ in $\mathcal{C}$ and mark all elements in $\mathcal{O}_S$ as visited (add to $\mathcal{V}$).
    \end{itemize}
\end{enumerate}
The final list $\mathcal{C}$ contains the disjoint affine equivalence classes, each fully listed and represented by its lexicographically smallest member.

\subsubsection{Non-Affine Isomorphisms}
It is important to note that affine equivalence does not capture all possible isomorphisms. There exist pairs of Levi graphs which are \textbf{isomorphic} but \textbf{affinely nonequivalent} (their canonical offsets distinct). Such "exceptional" isomorphisms correspond to combinatorial symmetries that do not arise from the cyclic structure of $\mathbb{Z}_n$. However, it is worthwhile to mention that for square-free orders $n$ of $\mathbb Z_n$, like $n=6$ and $n=12$, it is known \cite{muzychuk1995} that the affine classification coincides with the full isomorphism classification.

\subsection{Examples: cases of   $\mathbb{Z}_3$,  $\mathbb{Z}_4$,  $\mathbb{Z}_6$ and $\mathbb{Z}_{12}$}

\subsubsection{The case  $\mathbb{Z}_3$}
There is one normalized offset in this case, namely $S=\{0,1,2\}$, and thus there is only one class of affinely equivalent cyclic bipartite cubic Levi graphs in this case. Thus, here the only Levi graph is
$${\bf L_3(0,1,2)},$$
  with the following visualization:

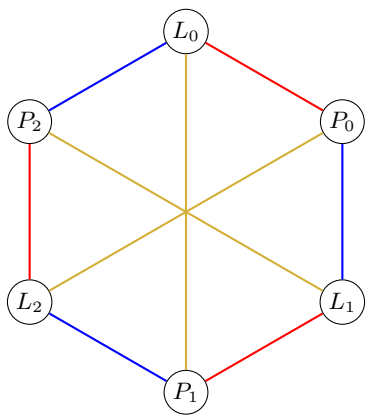
\begin{figure}[H]
    \centering
    \begin{tikzpicture}[scale=1.2]
        \definecolor{mygold}{RGB}{212, 175, 55}
        \tikzset{lnode/.style={circle, draw=black, fill=white, inner sep=1.5pt, font=\scriptsize}}
        % Changed pnode fill to white and text to black
        \tikzset{pnode/.style={circle, draw=black, fill=white, inner sep=1.5pt, text=black, font=\scriptsize}}
        
        % Macro for L_n(a,b,c)
        % Arguments: n, a, b, c, Radius
        \def\n{3}
        \def\a{0} \def\b{1} \def\c{2}
        \def\R{2.0}
        
        % Vertices
        \foreach \i in {0,...,2} {
            % Angle for L_i
            \pgfmathsetmacro{\angL}{90 - \i * (360/\n)}
           % Angle for P_i (halfway between L_i and L_{i+1})
            \pgfmathsetmacro{\angP}{\angL - (180/\n)}
            
            \node[lnode] (L\i) at (\angL:\R) {$L_{\i}$};
            \node[pnode] (P\i) at (\angP:\R) {$P_{\i}$};
        }
        
        % Edges
        \foreach \i in {0,...,2} {
            % P_i connects to L_{i+a}, L_{i+b}, L_{i+c}
            \pgfmathsetmacro{\idxA}{int(mod(\i+\a, \n))}
            \pgfmathsetmacro{\idxB}{int(mod(\i+\b, \n))}
            \pgfmathsetmacro{\idxC}{int(mod(\i+\c, \n))}
            
            \draw[red, thick] (P\i) -- (L\idxA);
            \draw[blue, thick] (P\i) -- (L\idxB);
            \draw[mygold, thick] (P\i) -- (L\idxC);
        }
    \end{tikzpicture}
    \caption{The unique Levi graph for $\mathbb{Z}_3$, $L_3(0,1,2)$. The nodes $L_i$ and $P_i$ alternate on the circle. Edges correspond to offset components: $0$ (Red), $1$ (Blue), and $2$ (Gold).}
    \label{fig:L4_graph}
\end{figure}
It is known that this graph has automorphism {\bf group order $72$}. 
  
\subsubsection{The case  $\mathbb{Z}_4$}
There is precisely one cyclic cubic bipartite Levi graph $L_4(a,b,c)$. As in this case there are only three normalized offsets $\{0,1,2\}$, $\{0,1,3\}$ and $\{0,2,3\}$ one sees that if ${\bf \{0,1,2\}}$ is shifted by (-1), then $\{0,1,2\}\sim\{-1,0,1\}=\{3,0,1\}$, which when sorted gives ${\bf \{0,1,3\}}$, and if ${\bf \{0,1,2\}}$ is shifted by (-2), then $\{0,1,2\}\sim\{-2,-1,0\}=\{2,3,0\}$, which when sorted gives ${\bf\{ 0,2,3\}}$. Thus all \textbf{3} normalized offsets belong to the same orbit.

\textbf{Result:} There is precisely \textbf{one} cyclic cubic bipartite Levi graphs for $n=4$: $$\mathbf{L_4(0,1,2)}.$$ 

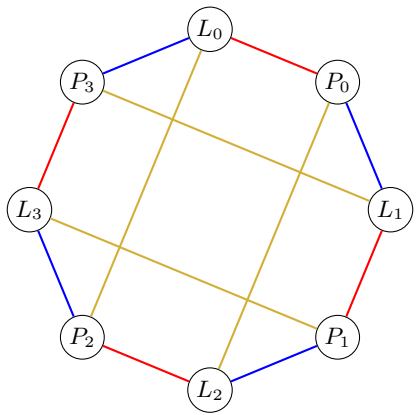
\begin{figure}[H]
    \centering
    \begin{tikzpicture}[scale=1.2]
        \definecolor{mygold}{RGB}{212, 175, 55}
        \tikzset{lnode/.style={circle, draw=black, fill=white, inner sep=1.5pt, font=\scriptsize}}
        % Changed pnode fill to white and text to black
        \tikzset{pnode/.style={circle, draw=black, fill=white, inner sep=1.5pt, text=black, font=\scriptsize}}
        
        % Macro for L_n(a,b,c)
        % Arguments: n, a, b, c, Radius
        \def\n{4}
        \def\a{0} \def\b{1} \def\c{2}
        \def\R{2.0}
        
        % Vertices
        \foreach \i in {0,...,3} {
            % Angle for L_i
            \pgfmathsetmacro{\angL}{90 - \i * (360/\n)}
            % Angle for P_i (halfway between L_i and L_{i+1})
            \pgfmathsetmacro{\angP}{\angL - (180/\n)}
            
            \node[lnode] (L\i) at (\angL:\R) {$L_{\i}$};
            \node[pnode] (P\i) at (\angP:\R) {$P_{\i}$};
        }
        
        % Edges
        \foreach \i in {0,...,3} {
            % P_i connects to L_{i+a}, L_{i+b}, L_{i+c}
            \pgfmathsetmacro{\idxA}{int(mod(\i+\a, \n))}
            \pgfmathsetmacro{\idxB}{int(mod(\i+\b, \n))}
            \pgfmathsetmacro{\idxC}{int(mod(\i+\c, \n))}
            
            \draw[red, thick] (P\i) -- (L\idxA);
            \draw[blue, thick] (P\i) -- (L\idxB);
            \draw[mygold, thick] (P\i) -- (L\idxC);
        }
    \end{tikzpicture}
    \caption{The unique Levi graph for $\mathbb{Z}_4$, $L_4(0,1,2)$. The nodes $L_i$ and $P_i$ alternate on the circle. Edges correspond to offset components: $0$ (Red), $1$ (Blue), and $2$ (Gold).}
  %  \label{fig:L4_graph}
\end{figure}
It is known that this graph has automorphism {\bf group order $48$}.
\subsubsection{The case  $\mathbb{Z}_6$}

\paragraph{Detailed Derivation of Affine Orbits}%\label{z5class}

Let $n=6$. The group of units is $\mathbb{Z}_6^* = \{1, 5\}$.
The total number of normalized offsets in $\mathbb{Z}_6$ is $\binom{5}{2} = 10$. The set of all normalized offsets is:
\[ \mathcal{N} = \big\{ \{0,1,2\}, \{0,1,3\}, \{0,1,4\}, \{0,1,5\}, \{0,2,3\}, \{0,2,4\}, \{0,2,5\}, \{0,3,4\}, \{0,3,5\}, \{0,4,5\} \big\}. \]

We apply our algorithm step-by-step.

\paragraph{Step 1:} We pick the first offset $S = \{0, 1, 3\}$. This is our first canonical offset.

\paragraph{Step 2:} We generate the orbit of $S=\{0,1,3\}$ by applying all $\alpha \in \mathbb{Z}_6^*$ and normalizing the results.

\begin{itemize}
    \item \textbf{Case $\alpha = 1$:} $S_1 = {\bf \{0, 1, 3\}}$.
    \begin{itemize}
        \item Shift by 0: $\{0, 1, 3\}$.
        \item Shift by $-1$: $\{ -1, 0, 2 \} \equiv \{5, 0, 2\}$. Sorted: \textbf{\{0, 2, 5\}}.
        \item Shift by $-3$: $\{ -3, -2, 0 \} \equiv \{3, 4, 0\}$. Sorted: \textbf{\{0, 3, 4\}}.
    \end{itemize}
    
    \item \textbf{Case $\alpha = 5$:} $S_2 = 5\times \{0, 1, 3\}$.
    \begin{itemize}
        \item Shift by 0: $\{0, 5, 15\}\sim \{0,5,3\}$. Sorted: \textbf{\{0, 3, 5\}}.
        \item Shift by $-3$: $\{ -3, 0, 2 \} \equiv \{3, 0, 2\}$. Sorted: \textbf{\{0, 2, 3\}}.
        \item Shift by $-5$: $\{ -5, -2, 0 \} \equiv \{1, 4, 0\}$. Sorted: \textbf{\{0, 1, 4\}}.
    \end{itemize}
\end{itemize}

\paragraph{Conclusion:} 
The orbit of $\{0,1,3\}$ is:
\[ \text{Orbit}_{{\bf \{0,1,3\}}} = \{ {\bf \{0,1,3\}}, \{0,1,4\}, \{0,2,3\}, \{0,2,5\}, \{0,3,4\}, \{0,3,5\} \}, \]
and it contains exactly \textbf{6} normalized offsets in $\mathbb{Z}_6$.

\paragraph{We iterate:}
We now take an offset ${\bf \{0,1,2\}}$ and applying our procedure we first obtain: $\{-1,0,1\}=\{5,0,1\}\sim {\bf\{0,1,5\}}$; then we obtain: $\{-2,-1,0\}=\{4,5,0\}\sim{\bf\{0,4,5\}}$. Furtheremore, since multiplication by 5 gives: $5\times\{0,1,2\}=\{0,5,4\}\sim\{0,4,5\}$, we see that there are no other offsets in this orbit. 
\paragraph{Conclusion:} 
The orbit of $\{0,1,2\}$ is:
\[ \text{Orbit}_{{\bf \{0,1,2\}}} = \{ {\bf \{0,1,2\}}, \{0,1,5\}, \{0,4,5\} \}.\]
It has \textbf{3} elements. Since 10-6-3=1, the orbit of the only left offset $S_3={\bf \{0,2,4\}}$ consists \textbf{1} element only, anf hence 
\[ \text{Orbit}_{{\bf \{0,2,4\}}} = \{{\bf  \{0,2,4\}} \}. \]

\textbf{Result:} There are precisely \textbf{three} cyclic cubic bipartite Levi graphs for $n=6$: $$\mathbf{L_6(0,1,2)}, \quad \mathbf{L_6(0,1,3)}\quad \&\quad  \mathbf{L_6(0,2,4)}.$$

\begin{figure}[H]
    \centering
    \begin{tikzpicture}[scale=1.2, transform shape]
        \definecolor{mygold}{RGB}{212, 175, 55}
        \tikzset{lnode/.style={circle, draw=black, fill=white, inner sep=1pt, font=\tiny}}
        % Changed pnode fill to white and text to black
        \tikzset{pnode/.style={circle, draw=black, fill=white, inner sep=1pt, text=black, font=\tiny}}

        \newcommand{\drawLeviSix}[5]{
            % #1 xshift, #2 Label, #3 a, #4 b, #5 c
            \begin{scope}[xshift=#1cm]
                \def\n{6}
                \def\R{1.8}
                \foreach \i in {0,...,5} {
                    \pgfmathsetmacro{\angL}{90 - \i * (360/\n)}
                    \pgfmathsetmacro{\angP}{\angL - (180/\n)}
                    \node[lnode] (L\i) at (\angL:\R) {$L_{\i}$};
                    \node[pnode] (P\i) at (\angP:\R) {$P_{\i}$};
                }
                \foreach \i in {0,...,5} {
                    \pgfmathsetmacro{\idxA}{int(mod(\i+#3, \n))}
                    \pgfmathsetmacro{\idxB}{int(mod(\i+#4, \n))}
                    \pgfmathsetmacro{\idxC}{int(mod(\i+#5, \n))}
                    \draw[red] (P\i) -- (L\idxA);
                    \draw[blue] (P\i) -- (L\idxB);
                    \draw[mygold] (P\i) -- (L\idxC);
                }
                \node[yshift=-2.5cm, font=\bfseries] at (0,0) {#2};
            \end{scope}
        }

        \drawLeviSix{-5}{$L_6(0,1,2)$}{0}{1}{2}
        \drawLeviSix{0}{$L_6(0,1,3)$}{0}{1}{3}
        \drawLeviSix{5}{$L_6(0,2,4)$}{0}{2}{4}

    \end{tikzpicture}
    \caption{The three non-isomorphic Levi graphs for $\mathbb{Z}_6$. Left: $L_6(0,1,2)$, Middle: $L_6(0,1,3)$, Right: $L_6(0,2,4)$.}
    \label{fig:L6_graphs}
\end{figure}

A noteworthy feature of the third graph, $L_6(0,2,4)$, distinguishes it from the previous two: it is \textbf{disconnected}. The vertex set partitions into two independent subsets that do not communicate: one containing all vertices with even indices ($L_{2k}, P_{2k}$) and another containing all vertices with odd indices ($L_{2k+1}, P_{2k+1}$). This occurs because the offset steps $\{0, 2, 4\}$ are all even numbers, making it impossible to traverse from an even node to an odd node. Geometrically, this implies the configuration space fractures into two separate, isomorphic sub-universes.

\begin{figure}[H]
    \centering
    \begin{tikzpicture}[scale=0.7, transform shape]
        % Define colors
        \definecolor{mygold}{RGB}{212, 175, 55}
        
        % Define node styles (White fill, Black text)
        \tikzset{lnode/.style={circle, draw=black, fill=white, inner sep=1.5pt, font=\scriptsize}}
        \tikzset{pnode/.style={circle, draw=black, fill=white, inner sep=1.5pt, text=black, font=\scriptsize}}

        \def\n{6}
        \def\R{2.0}
        
        % --- LEFT: Even Component (Indices 0, 2, 4) ---
        \begin{scope}[xshift=-3.5cm]
            \node[font=\bfseries] at (0, -3.0) {Even Component};
            
            % Draw Even Nodes using their ORIGINAL angular positions from the 6-cycle
            \foreach \i in {0, 2, 4} {
                % Calculate Angle: L_0 at 90 deg, decreasing by 60 deg (360/6)
                \pgfmathsetmacro{\angL}{90 - \i * (360/\n)}
                % P_i is situated -30 deg from L_i
                \pgfmathsetmacro{\angP}{\angL - (180/\n)}
                
                \node[lnode] (L\i) at (\angL:\R) {$L_{\i}$};
                \node[pnode] (P\i) at (\angP:\R) {$P_{\i}$};
            }
            
            % Connections for Even Nodes (offsets 0, 2, 4)
            \foreach \i in {0, 2, 4} {
                \pgfmathsetmacro{\idxA}{int(mod(\i+0, \n))}
                \pgfmathsetmacro{\idxB}{int(mod(\i+2, \n))}
                \pgfmathsetmacro{\idxC}{int(mod(\i+4, \n))}
                
                \draw[red, thick] (P\i) -- (L\idxA);
                \draw[blue, thick] (P\i) -- (L\idxB);
                \draw[mygold, thick] (P\i) -- (L\idxC);
            }
        \end{scope}

        % --- RIGHT: Odd Component (Indices 1, 3, 5) ---
        \begin{scope}[xshift=3.5cm]
            \node[font=\bfseries] at (0, -3.0) {Odd Component};
            
            % Draw Odd Nodes using their ORIGINAL angular positions (shifted relative to Evens)
            \foreach \i in {1, 3, 5} {
                \pgfmathsetmacro{\angL}{90 - \i * (360/\n)}
                \pgfmathsetmacro{\angP}{\angL - (180/\n)}
                
                \node[lnode] (L\i) at (\angL:\R) {$L_{\i}$};
                \node[pnode] (P\i) at (\angP:\R) {$P_{\i}$};
            }
            
            % Connections for Odd Nodes
            \foreach \i in {1, 3, 5} {
                \pgfmathsetmacro{\idxA}{int(mod(\i+0, \n))}
                \pgfmathsetmacro{\idxB}{int(mod(\i+2, \n))}
                \pgfmathsetmacro{\idxC}{int(mod(\i+4, \n))}
                
                \draw[red, thick] (P\i) -- (L\idxA);
                \draw[blue, thick] (P\i) -- (L\idxB);
                \draw[mygold, thick] (P\i) -- (L\idxC);
            }
        \end{scope}

    \end{tikzpicture}
    \caption{The decomposition of $L_6(0,2,4)$ into two disjoint subgraphs. The left graph contains only even-indexed nodes, and the right graph contains only odd-indexed nodes. Note that the vertex positions preserve the orientation from the original circular layout.}
    \label{fig:L6_split}
\end{figure}
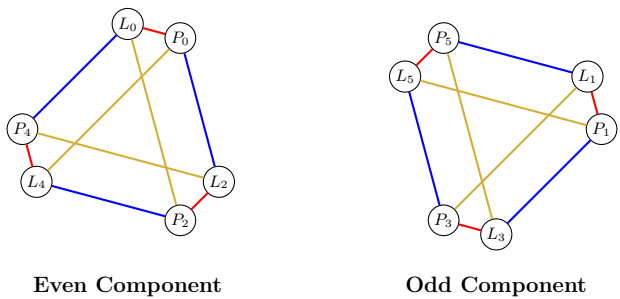

We also provide another—more symmetric and standard—visualization of these two graphs as Levi $L_3$ graphs. In this representation, the 6 nodes of each component ($3$ points, $3$ lines) are spaced equally on the perimeter. This reveals that each component forms a complete bipartite graph $L_3(0,1,2)$, where the connections corresponding to offsets $0$ and $2$ form the perimeter (hexagon), while the connections for offset $4$ form the long diagonals.

\begin{figure}[H]
    \centering
    \begin{tikzpicture}[scale=0.7, transform shape]
        \definecolor{mygold}{RGB}{212, 175, 55}
        \tikzset{lnode/.style={circle, draw=black, fill=white, inner sep=1.5pt, font=\scriptsize}}
        \tikzset{pnode/.style={circle, draw=black, fill=white, inner sep=1.5pt, text=black, font=\scriptsize}}
        \def\R{2.0}

        % --- LEFT: Even Component (0, 2, 4) ---
        \begin{scope}[xshift=-3.0cm]
            \node[font=\bfseries] at (0, -2.8) {Even Component};
            
            % Placement: Interleave L and P nodes equally (60 degrees apart)
            % Order: L0, P0, L2, P2, L4, P4
            \foreach \k in {0, 1, 2} {
                \pgfmathsetmacro{\idx}{int(2*\k)}
                
                % L nodes at 90, 330 (-30), 210
                \pgfmathsetmacro{\angL}{90 - \k * 120}
                % P nodes at 30, 270 (-90), 150
                \pgfmathsetmacro{\angP}{90 - \k * 120 - 60}
                
                \node[lnode] (L\idx) at (\angL:\R) {$L_{\idx}$};
                \node[pnode] (P\idx) at (\angP:\R) {$P_{\idx}$};
            }

            % Edges
            \foreach \k in {0, 1, 2} {
                \pgfmathsetmacro{\idx}{int(2*\k)}
                
                % Neighbors in the sequence 0, 2, 4
                \pgfmathsetmacro{\nextIdx}{int(mod(\idx+2, 6))}
                \pgfmathsetmacro{\prevIdx}{int(mod(\idx+4, 6))} % effectively -2 mod 6
                
                % Offset 0 (Red): P_i connects to L_i (Neighbor in drawing)
                \draw[red, thick] (P\idx) -- (L\idx);
                
                % Offset 2 (Blue): P_i connects to L_{i+2} (Neighbor in drawing)
                \draw[blue, thick] (P\idx) -- (L\nextIdx);
                
                % Offset 4 (Gold): P_i connects to L_{i+4} (Opposite/Diameter)
                \draw[mygold, thick] (P\idx) -- (L\prevIdx);
            }
        \end{scope}

        % --- RIGHT: Odd Component (1, 3, 5) ---
        \begin{scope}[xshift=3.0cm]
            \node[font=\bfseries] at (0, -2.8) {Odd Component};
            
            % Placement: Interleave L and P nodes equally
            % Order: L1, P1, L3, P3, L5, P5
            \foreach \k in {0, 1, 2} {
                \pgfmathsetmacro{\idx}{int(2*\k + 1)}
                
                % Same geometric positions
                \pgfmathsetmacro{\angL}{90 - \k * 120}
                \pgfmathsetmacro{\angP}{90 - \k * 120 - 60}
                
                \node[lnode] (L\idx) at (\angL:\R) {$L_{\idx}$};
                \node[pnode] (P\idx) at (\angP:\R) {$P_{\idx}$};
            }

            % Edges
            \foreach \k in {0, 1, 2} {
                \pgfmathsetmacro{\idx}{int(2*\k + 1)}
                
                % Neighbors in the sequence 1, 3, 5
                % Modulo arithmetic must handle the sparse indices correctly
                % To get next odd number in 0..5 range:
                \pgfmathsetmacro{\nextIdx}{int(mod(\idx+2, 6))}
                \pgfmathsetmacro{\prevIdx}{int(mod(\idx+4, 6))}
                
                % Offset 0 (Red): P_i -- L_i
                \draw[red, thick] (P\idx) -- (L\idx);
                
                % Offset 2 (Blue): P_i -- L_{i+2}
                \draw[blue, thick] (P\idx) -- (L\nextIdx);
                
                % Offset 4 (Gold): P_i -- L_{i+4}
                \draw[mygold, thick] (P\idx) -- (L\prevIdx);
            }
        \end{scope}

    \end{tikzpicture}
    \caption{Standard symmetric decomposition of $L_6(0,2,4)$. Each component is isomorphic to $L_3(0,1,2)$. The nodes are arranged equidistantly on the circle, revealing that the offsets $0$ (red) and $2$ (blue) form the hexagonal perimeter, while offset $4$ (gold) forms the diameters.}
    \label{fig:L6_split_symmetric}
\end{figure}
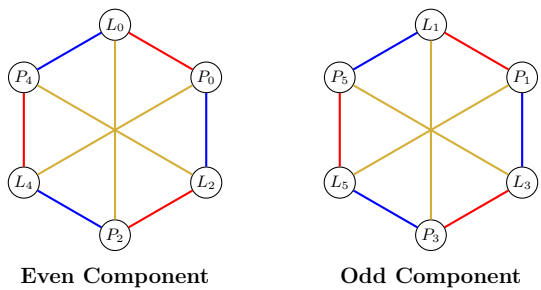
We conclude the $\mathbb{Z}_6$ section with a remark that:
\begin{enumerate}
\item Graph ${\bf L_6(0,1,2)}$ has symmetry group of {\bf order $24$}.
\item Graph ${\bf L_6(0,1,3)}$ has symmetry group of {\bf order $48$}.
 \item Graph ${\bf L_6(0,2,4)}$ has symmetry group of {\bf order $10368$}, (because, $10368=72^2\times 2$).
  \end{enumerate}

\subsubsection{The case $\mathbb{Z}_{12}$}

\paragraph{Affine equivalence of offsets}%\label{z10class}

For $n=12$, we have $\mathbb{Z}_{12}^* = \{1, 5, 7, 11\}$. Applying the same orbit enumeration algorithm, we find that the sets merge into exactly \textbf{nine} affine equivalence classes.
\begin{center}
\begin{tabular}{|c|m{10.2cm}|m{2.0cm}|c|}
\hline
Class & Offsets in the class (Normalized)&\centering Levi G& Order of $\mathrm{Aut}(G)$ \\ \hline
1 &${\bf \{0,1,2\}}, \{0,1,11\}, \{0,2,7\}, \{0,5,7\}, \{0,5,10\}, \{0,10,11\}$ &\centering ${\bf L_{12}(0,1,2)}$& {\bf 48}\\ \hline
2 &  ${\bf\{0,1,3\}}, \{0,1,10\}, \{0,2,3\}, \{0,2,5\}, \{0,2,9\}, \{0,2,11\},$ \newline $ \{0,3,5\}$, $\{0,3,10\}, \{0,7,9\}, \{0,7,10\}, \{0,9,10\}, \{0,9,11\}$ &\centering ${\bf L_{12}(0,1,3)}$ & {\bf 24}\\ \hline
3 &${\bf\{0,1,4\}}, \{0,1,9\}, \{0,3,4\}, \{0,3,7\}, \{0,3,8\}, \{0,3,11\}$ \newline $\{0,4,7\}, \{0,4,9\}, \{0,5,8\}, \{0,5,9\}, \{0,8,9\}, \{0,8,11\}$ &\centering ${\bf L_{12}(0,1,4)}$ & {\bf 24}\\ \hline
4 & ${\bf \{0,1,5\}}, \{0,1,8\}, \{0,4,5\}, \{0,4,11\}, \{0,7,8\}, \{0,7,11\}$ &\centering ${\bf L_{12}(0,1,5)}$ & {\bf 48}\\ \hline
5 & ${\bf \{0,1,6\}}, \{0,1,7\}, \{0,5,6\}, \{0,5,11\}, \{0,6,7\}, \{0,6,11\}$ & \centering ${\bf L_{12}(0,1,6)}$& {\bf 768}\\ \hline
6 & ${\bf \{0,2,4\}}, \{0,2,10\}, \{0,8,10\}$ & \centering ${\bf L_{12}(0,2,4)}$& {\bf 1152}\\ \hline
7 & ${\bf \{0,2,6\}}, \{0,2,8\}, \{0,4,6\}, \{0,4,10\}, \{0,6,8\}, \{0,6,10\}$ &\centering ${\bf L_{12}(0,2,6)}$ & {\bf 4608}\\ \hline
8 & ${\bf \{0,3,6\}}, \{0,3,9\}, \{0,6,9\}$ & \centering ${\bf L_{12}(0,3,6)}$&{\bf 663552} \\ \hline
9 & ${\bf \{0,4,8\}}$ &\centering ${\bf L_{12}(0,4,8)}$ & {\bf 644972544}\\ \hline
\end{tabular}
\end{center}

\begin{figure}[H]
    \centering
    \begin{tikzpicture}[scale=0.95, transform shape]
        \definecolor{mygold}{RGB}{212, 175, 55}
        \tikzset{lnode/.style={circle, draw=black, fill=white, inner sep=0.5pt, font=\fontsize{3}{4}\selectfont}}
        % Changed pnode fill to white and text to black
        \tikzset{pnode/.style={circle, draw=black, fill=white, inner sep=0.5pt, text=black, font=\fontsize{3}{4}\selectfont}}

        % Macro for L_12(a,b,c)
        % #1 x, #2 y, #3 Label, #4 a, #5 b, #6 c
        \newcommand{\drawLeviTwelve}[6]{
            \begin{scope}[xshift=#1cm, yshift=#2cm]
                \def\n{12}
                \def\R{1.8}
                \foreach \i in {0,...,11} {
                    \pgfmathsetmacro{\angL}{90 - \i * (360/\n)}
                    \pgfmathsetmacro{\angP}{\angL - (180/\n)}
                    \node[lnode] (L\i) at (\angL:\R) {$L_{\i}$};
                    % Combined node for P to ensure correct styling and labels
                    \node[pnode] (P\i) at (\angP:\R) {$P_{\i}$};
                }
                \foreach \i in {0,...,11} {
                    \pgfmathsetmacro{\idxA}{int(mod(\i+#4, \n))}
                    \pgfmathsetmacro{\idxB}{int(mod(\i+#5, \n))}
                    \pgfmathsetmacro{\idxC}{int(mod(\i+#6, \n))}
                    \draw[red, thin] (P\i) -- (L\idxA);
                    \draw[blue, thin] (P\i) -- (L\idxB);
                    \draw[mygold, thin] (P\i) -- (L\idxC);
                }
                \node[yshift=-2.2cm, font=\bfseries\scriptsize] at (0,0) {#3};
            \end{scope}
        }
        
        % Spacing
        \def\dx{5.0}
        \def\dy{-5.0}

        % Row 1
        \drawLeviTwelve{0}{0}{$L_{12}(0,1,2)$}{0}{1}{2}
        \drawLeviTwelve{\dx}{0}{$L_{12}(0,1,3)$}{0}{1}{3}
        \drawLeviTwelve{2*\dx}{0}{$L_{12}(0,1,4)$}{0}{1}{4}

        % Row 2
        \drawLeviTwelve{0}{\dy}{$L_{12}(0,1,5)$}{0}{1}{5}
        \drawLeviTwelve{\dx}{\dy}{$L_{12}(0,1,6)$}{0}{1}{6}
        \drawLeviTwelve{2*\dx}{\dy}{$L_{12}(0,2,4)$}{0}{2}{4}

        % Row 3
        \drawLeviTwelve{0}{2*\dy}{$L_{12}(0,2,6)$}{0}{2}{6}
        \drawLeviTwelve{\dx}{2*\dy}{$L_{12}(0,3,6)$}{0}{3}{6}
        \drawLeviTwelve{2*\dx}{2*\dy}{$L_{12}(0,4,8)$}{0}{4}{8}

    \end{tikzpicture}
    \caption{The nine canonical affine classes of cyclic cubic bipartite Levi graphs for $\mathbb{Z}_{12}$. Nodes $L_i$ and $P_i$ alternate on the circle. The three offsets correspond to Red (1st), Blue (2nd), and Gold (3rd) edges.}
    \label{fig:L12_classification}
\end{figure}

The phenomenon of graph disconnection observed in $L_6(0,2,4)$ reappears in the $\mathbb{Z}_{12}$ classification.
%A Levi graph $L_n(a,b,c)$ is disconnected if and only if the greatest common divisor of the offsets and the modulus, $d = \gcd(a, b, c, n)$, is greater than 1. In such cases, the graph splits into $d$ isomorphic, disjoint components.
Among the nine classes of $L_{12}$, four exhibit this separation. We analyze these decompositions below.

\subsubsection*{Decomposition of $L_{12}(0,2,4)$}
The graph $L_{12}(0,2,4)$ decomposes into two identical components. 

\begin{itemize}
    \item **Even Component:** Contains vertices with indices $\{0, 2, 4, 6, 8, 10\}$.
    \item **Odd Component:** Contains vertices with indices $\{1, 3, 5, 7, 9, 11\}$.
\end{itemize}

By dividing the offsets by the GCD ($0/2, 2/2, 4/2$), we find that the internal structure of each component is described by the local offsets $\{0, 1, 2\}$ acting on 6 nodes. Thus, both components are isomorphic to $L_6(0,1,2)$.

\begin{figure}[H]
    \centering
    \begin{tikzpicture}[scale=0.9, transform shape]
        \definecolor{mygold}{RGB}{212, 175, 55}
        \tikzset{lnode/.style={circle, draw=black, fill=white, inner sep=1.5pt, font=\scriptsize}}
        \tikzset{pnode/.style={circle, draw=black, fill=white, inner sep=1.5pt, text=black, font=\scriptsize}}
        \def\R{2.2}

        % --- LEFT: Even Component ---
        % Subgraph isomorphic to L_6(0,1,2)
        % Total vertices = 12 (6 L-nodes, 6 P-nodes)
        \begin{scope}[xshift=-4cm]
            \node[font=\bfseries] at (0, -2.8) {Even Component};
            
            % Place nodes on a single circle
            % We alternate L and P positions.
            % Local index j goes 0..5.
            % Slots 0..11 around the circle.
            \foreach \j in {0,...,5} {
                \pgfmathsetmacro{\labelIdx}{int(2*\j)}
                
                % Place L_j at even slots (0, 2, 4...)
                % Angle: Start at 90, go clockwise by 30 degrees per slot
                \pgfmathsetmacro{\angL}{90 - (2*\j) * 30}
                \node[lnode] (L\j) at (\angL:\R) {$L_{\labelIdx}$};
                
                % Place P_j at odd slots (1, 3, 5...)
                % P_j sits between L_j and L_{j+1}
                \pgfmathsetmacro{\angP}{90 - (2*\j + 1) * 30}
                \node[pnode] (P\j) at (\angP:\R) {$P_{\labelIdx}$};
            }

            % Draw Edges
            % P_j connects to L_j, L_{j+1}, L_{j+2} (local indices mod 6)
            \foreach \j in {0,...,5} {
                \pgfmathsetmacro{\targetA}{int(mod(\j+0, 6))}
                \pgfmathsetmacro{\targetB}{int(mod(\j+1, 6))}
                \pgfmathsetmacro{\targetC}{int(mod(\j+2, 6))}
                
                % Edges are straight lines (chords)
                \draw[red, thick] (P\j) -- (L\targetA);    % Adjacent (30 deg)
                \draw[blue, thick] (P\j) -- (L\targetB);   % Adjacent (30 deg)
                \draw[mygold, thick] (P\j) -- (L\targetC); % Skips one L (90 deg)
            }
        \end{scope}

        % --- RIGHT: Odd Component ---
        % Identical structure, labeled with odd numbers
        \begin{scope}[xshift=4cm]
            \node[font=\bfseries] at (0, -2.8) {Odd Component};
            
            \foreach \j in {0,...,5} {
                \pgfmathsetmacro{\labelIdx}{int(2*\j + 1)}
                
                % Same layout logic
                \pgfmathsetmacro{\angL}{90 - (2*\j) * 30}
                \node[lnode] (L\j) at (\angL:\R) {$L_{\labelIdx}$};
                
                \pgfmathsetmacro{\angP}{90 - (2*\j + 1) * 30}
                \node[pnode] (P\j) at (\angP:\R) {$P_{\labelIdx}$};
            }

            \foreach \j in {0,...,5} {
                \pgfmathsetmacro{\targetA}{int(mod(\j+0, 6))}
                \pgfmathsetmacro{\targetB}{int(mod(\j+1, 6))}
                \pgfmathsetmacro{\targetC}{int(mod(\j+2, 6))}
                
                \draw[red, thick] (P\j) -- (L\targetA);
                \draw[blue, thick] (P\j) -- (L\targetB);
                \draw[mygold, thick] (P\j) -- (L\targetC);
            }
        \end{scope}

    \end{tikzpicture}
    \caption{Decomposition of $L_{12}(0,2,4)$. The graph separates into two disjoint components (Even and Odd indices), each forming a ring graph isomorphic to $L_6(0,1,2)$. All vertices lie on a single circle per component.}
    \label{fig:L12_024_split}
\end{figure}
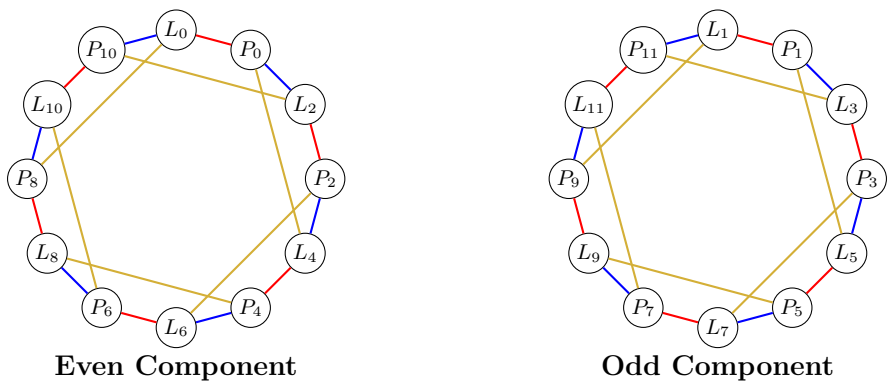

\subsubsection*{Decomposition of $L_{12}(0,2,6)$} Similar to the previous case, the graph splits into even and odd components. However, the reduced offsets are $0/2=0, 2/2=1, 6/2=3$. Thus, each component is isomorphic to $L_6(0,1,3)$, which has a different internal structure (specifically, it contains hexagram-like crossings).

\begin{figure}[H]
    \centering
    \begin{tikzpicture}[scale=1.0, transform shape]
        \definecolor{mygold}{RGB}{212, 175, 55}
        \tikzset{lnode/.style={circle, draw=black, fill=white, inner sep=1.5pt, font=\scriptsize}}
        \tikzset{pnode/.style={circle, draw=black, fill=white, inner sep=1.5pt, text=black, font=\scriptsize}}
        \def\R{1.8}

        % --- Component 1: Even Indices ---
        \begin{scope}[xshift=-4cm]
            \node[font=\bfseries] at (0, -2.8) {Even Component};
            \foreach \j in {0,...,5} {
                \pgfmathsetmacro{\origI}{int(2*\j)}
                \pgfmathsetmacro{\angL}{90 - \j * 60}
                \pgfmathsetmacro{\angP}{\angL - 30}
                \node[lnode] (L\j) at (\angL:\R) {$L_{\origI}$};
                \node[pnode] (P\j) at (\angP:\R) {$P_{\origI}$};
            }
            % Edges: 0, 1, 3 in local space
            \foreach \j in {0,...,5} {
                \pgfmathsetmacro{\idxA}{int(mod(\j+0, 6))}
                \pgfmathsetmacro{\idxB}{int(mod(\j+1, 6))}
                \pgfmathsetmacro{\idxC}{int(mod(\j+3, 6))}
                \draw[red, thick] (P\j) -- (L\idxA);
                \draw[blue, thick] (P\j) -- (L\idxB);
                \draw[mygold, thick] (P\j) -- (L\idxC);
            }
        \end{scope}

        % --- Component 2: Odd Indices ---
        \begin{scope}[xshift=4cm]
            \node[font=\bfseries] at (0, -2.8) {Odd Component};
            \foreach \j in {0,...,5} {
                \pgfmathsetmacro{\origI}{int(2*\j + 1)}
                \pgfmathsetmacro{\angL}{90 - \j * 60}
                \pgfmathsetmacro{\angP}{\angL - 30}
                \node[lnode] (L\j) at (\angL:\R) {$L_{\origI}$};
                \node[pnode] (P\j) at (\angP:\R) {$P_{\origI}$};
            }
            \foreach \j in {0,...,5} {
                \pgfmathsetmacro{\idxA}{int(mod(\j+0, 6))}
                \pgfmathsetmacro{\idxB}{int(mod(\j+1, 6))}
                \pgfmathsetmacro{\idxC}{int(mod(\j+3, 6))}
                \draw[red, thick] (P\j) -- (L\idxA);
                \draw[blue, thick] (P\j) -- (L\idxB);
                \draw[mygold, thick] (P\j) -- (L\idxC);
            }
        \end{scope}
    \end{tikzpicture}
    \caption{Decomposition of $L_{12}(0,2,6)$ into two disjoint copies of $L_6(0,1,3)$.}
    \label{fig:L12_026_split}
\end{figure}
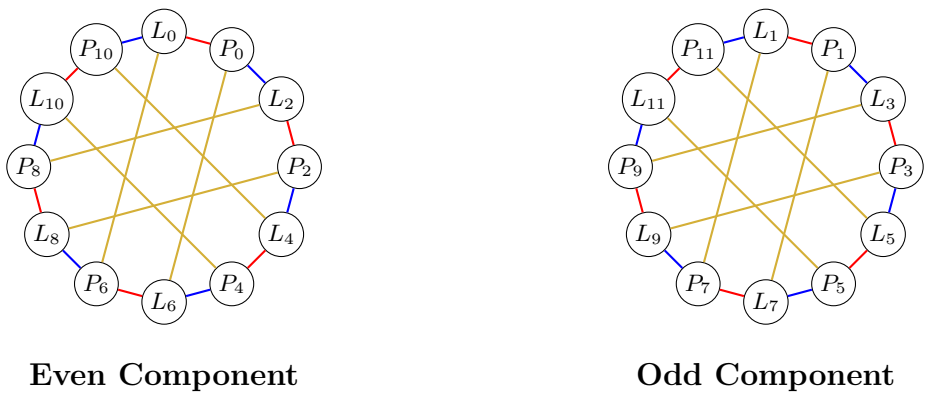

\subsubsection*{Decomposition of $L_{12}(0,3,6)$}
The graph decomposes into three disjoint components. Dividing the offsets by 3 gives $\{0, 1, 2\}$. Thus, each component is isomorphic to $L_4(0,1,2)$, which appears as a square (or cube-like) graph.

\begin{figure}[H]
    \centering
    \begin{tikzpicture}[scale=1.0, transform shape]
        \definecolor{mygold}{RGB}{212, 175, 55}
        \tikzset{lnode/.style={circle, draw=black, fill=white, inner sep=1.5pt, font=\scriptsize}}
        \tikzset{pnode/.style={circle, draw=black, fill=white, inner sep=1.5pt, text=black, font=\scriptsize}}
        \def\R{1.8}

        % Macro for drawing L4 components
        % #1 xshift, #2 residue (start index)
        \newcommand{\drawCompThree}[2]{
            \begin{scope}[xshift=#1cm]
                \node[font=\bfseries] at (0, -2.5) {Residue #2};
                % Loop 4 times for L_4 structure
                \foreach \j in {0,...,3} {
                    \pgfmathsetmacro{\origI}{int(3*\j + #2)}
                    \pgfmathsetmacro{\angL}{90 - \j * 90}
                    \pgfmathsetmacro{\angP}{\angL - 45}
                    \node[lnode] (L\j) at (\angL:\R) {$L_{\origI}$};
                    \node[pnode] (P\j) at (\angP:\R) {$P_{\origI}$};
                }
                % Edges: 0, 1, 2 in local space
                \foreach \j in {0,...,3} {
                    \pgfmathsetmacro{\idxA}{int(mod(\j+0, 4))}
                    \pgfmathsetmacro{\idxB}{int(mod(\j+1, 4))}
                    \pgfmathsetmacro{\idxC}{int(mod(\j+2, 4))}
                    \draw[red, thick] (P\j) -- (L\idxA);
                    \draw[blue, thick] (P\j) -- (L\idxB);
                    \draw[mygold, thick] (P\j) -- (L\idxC);
                }
            \end{scope}
        }

        \drawCompThree{-5}{0}
        \drawCompThree{0}{1}
        \drawCompThree{5}{2}

    \end{tikzpicture}
    \caption{Decomposition of $L_{12}(0,3,6)$ into three disjoint copies of $L_4(0,1,2)$.}
    \label{fig:L12_036_split}
\end{figure}

\subsubsection*{Decomposition of $L_{12}(0,4,8)$}
Finally, the $L_{12}(0,4,8)$ graph fractures into four independent components. Dividing offsets by 4 gives $\{0, 1, 2\}$ in $\mathbb{Z}_3$. Each component is an instance of $L_3(0,1,2)$. Since $L_3$ contains 3 points and 3 lines with every point connected to every line, each component forms a complete bipartite graph.

\begin{figure}[H]
    \centering
    \begin{tikzpicture}[scale=0.8, transform shape]
        \definecolor{mygold}{RGB}{212, 175, 55}
        \tikzset{lnode/.style={circle, draw=black, fill=white, inner sep=1.0pt, font=\tiny}}
        \tikzset{pnode/.style={circle, draw=black, fill=white, inner sep=1.0pt, text=black, font=\tiny}}
        \def\R{1.8}
        
        % Macro for drawing L3 components
        % #1 xshift, #2 residue
        \newcommand{\drawCompFour}[2]{
            \begin{scope}[xshift=#1cm]
                \node[font=\bfseries] at (0, -2.5) {Residue #2};
                % Loop 3 times for L_3 structure
                \foreach \j in {0,...,2} {
                    \pgfmathsetmacro{\origI}{int(4*\j + #2)}
                    \pgfmathsetmacro{\angL}{90 - \j * 120}
                    \pgfmathsetmacro{\angP}{\angL - 60}
                    \node[lnode] (L\j) at (\angL:\R) {$L_{\origI}$};
                    \node[pnode] (P\j) at (\angP:\R) {$P_{\origI}$};
                }
                % Edges: 0, 1, 2 in local space (connect to all)
                \foreach \j in {0,...,2} {
                    \pgfmathsetmacro{\idxA}{int(mod(\j+0, 3))}
                    \pgfmathsetmacro{\idxB}{int(mod(\j+1, 3))}
                    \pgfmathsetmacro{\idxC}{int(mod(\j+2, 3))}
                    \draw[red, thick] (P\j) -- (L\idxA);
                    \draw[blue, thick] (P\j) -- (L\idxB);
                    \draw[mygold, thick] (P\j) -- (L\idxC);
                }
            \end{scope}
        }

        \drawCompFour{-6.6}{0}
        \drawCompFour{-2.2}{1}
        \drawCompFour{2.2}{2}
        \drawCompFour{6.6}{3}

    \end{tikzpicture}
    \caption{Decomposition of $L_{12}(0,4,8)$ into four disjoint copies of $K_{3,3}$ (isomorphic to $L_3(0,1,2)$).}
    \label{fig:L12_048_split}
\end{figure}

\subsection{Symmetry Groups of $\mathbb{Z}_{12}$ Levi Graphs}

The Levi graph of configuration $\mathbb{Z}_{12}$ with offsets $S$ is a bipartite graph with 24 vertices (12 Points $P_i$, 12 Lines $L_j$) and edges defined by $L_{i+s} \sim P_i$ for each $s \in S$. The structure of the automorphism group is determined by the divisibility and arithmetic properties of the offset set $S$.

For $n=12$, there are 9 distinct isomorphism classes. We categorize them below by their connectivity and symmetry properties.

% --- CONNECTED GRAPHS ---
\subsubsection{Connected Graphs}

This is about graphs from Classes 1 to 5 in our Table of the $\mathbb{Z}_{12}$ normalized offsets. The symmetry order depends on whether the offset structure allows for reflection or specific modular multipliers.

\paragraph{\bf Class 1: Offset $S = \{0, 1, 2\}$}

\textbf{Group Order:} $48$.

This graph corresponds to the edges of a hexagonal prism.
\begin{enumerate}
    \item \textbf{Rotational Symmetry ($\times 12$):} The cyclic group $\mathbb{Z}_{12}$ acts freely on the indices.
    \item \textbf{Reflectional Symmetry ($\times 2$):} The interval sequence $(1, 1)$ is a palindrome, allowing for mirror reflection ($x \to -x$).
    \item \textbf{Duality ($\times 2$):} The graph is self-dual ($P \leftrightarrow L$).
\end{enumerate}

\textbf{Total Order:} $12 \times 2 \times 2 = \mathbf{48}$.

\paragraph{\bf Classes 2 \& 3: Offsets $\{0, 1, 3\}$ and $\{0, 1, 4\}$}

\textbf{Group Orders:} $24$.

These graphs possess chirality. The interval sequences ($1,2$ and $1,3$ respectively) are not palindromes, eliminating reflectional symmetry.

\textbf{Total Order:} $12 \text{ (Rot)} \times 1 \text{ (No Refl)} \times 2 \text{ (Dual)} = \mathbf{24}$.

\paragraph{\bf Class 4: Offset  $S = \{0, 1, 5\}$}

\textbf{Group Order:} $48$.

Unlike the chiral classes, this graph regains high symmetry through number-theoretic properties specific to the multiplier $5$ in $\mathbb{Z}_{12}$.
\begin{enumerate}
    \item \textbf{Rotational Symmetry ($\times 12$):} Standard cyclic action.
    \item \textbf{Multiplier Symmetry ($\times 2$):} The map $\sigma(x) = 5x \pmod{12}$ preserves the set $S$. Note that $5 \times \{0, 1, 5\} = \{0, 5, 25\} \equiv \{0, 5, 1\}$, which is the original set $S$. This acts as a permutation of the connections.
    \item \textbf{Duality ($\times 2$):} The map $\tau(x) = 7x \pmod{12}$ swaps Points and Lines. Note that $7 \times S = \{0, 7, 35\} \equiv \{0, 7, 11\}$. This corresponds to the set $-S = \{0, -1, -5\}$ (modulo 12), which is the condition for self-duality.
\end{enumerate}

\textbf{Total Order:} $12 \times 2 \times 2 = \mathbf{48}$.

\paragraph{\bf Class 5: Offset $S = \{0, 1, 6\}$}

\textbf{Group Order:} 768.

The graph consists of 24 vertices. The specific connectivity $S=\{0,1,6\}$ partitions these vertices into 6 distinct rectangles (4-cycles) arranged in a hexagonal ring; see the central graph at Figure \ref{fig:L12_classification}. The automorphism group order arises from the independence of local operations within these rectangles relative to the global ring structure.

The total order is calculated as the product of the macroscopic ring symmetry and the microscopic local freedoms:
$$ |\mathrm{Aut}(G)| = 12 \times 64 = 768 $$

\begin{enumerate}
    \item \textbf{Macroscopic Symmetry (The Ring):} \\
    The 6 rectangles are arranged in a cycle $Q_0 - Q_1 - Q_2 - Q_3 - Q_4 - Q_5 - Q_0$. Treating each rectangle as a rigid unit, the structure forms a hexagon. The symmetry group of a hexagon is the Dihedral group $D_6$, which contributes:
    \begin{itemize}
        \item \textbf{6 Rotations:} Shifting the ring by $k$ steps.
        \item \textbf{6 Reflections:} Flipping the ring across its axes.
        \item \textbf{Total:} 12 symmetries.
    \end{itemize}

    \item \textbf{Microscopic Symmetry (The Independent Flips):} \\
    The connections between adjacent rectangles (offsets 1 and -1) link the "top" vertices to "top" vertices and "bottom" to "bottom" (parallel rails). This decoupling allows each rectangle $Q_i$ to undergo an internal vertical flip (swapping its top and bottom vertices) independently of its neighbors.
    
    Since there are 6 rectangles and each has 2 states (flipped or unflipped), the number of local operations is:
    $$ \underbrace{2}_{Q_0} \times \underbrace{2}_{Q_1} \times \underbrace{2}_{Q_2} \times \underbrace{2}_{Q_3} \times \underbrace{2}_{Q_4} \times \underbrace{2}_{Q_5} = 2^6 = 64 $$
\end{enumerate}

\vspace{1cm}

\subsection*{Visualization of the $12 \times 2^6$ Structure}
The diagram below illustrates the 6 rectangles forming the ring. The blue arrows indicate the 12 global symmetries ($D_6$), while the red arrows indicate the 6 independent local flips ($Z_2$).

\begin{center}
\begin{tikzpicture}[scale=0.85]
    % Define the 6 positions for the squares
    \foreach \i in {0,...,5} {
        \pgfmathsetmacro{\ang}{90 - \i*60}
        \pgfmathsetmacro{\nextang}{90 - mod(\i+1,6)*60}
        
        % Define coordinates for square i
        % Outer/Top rail node
        \coordinate (T\i) at (\ang:4cm);
        % Inner/Bottom rail node
        \coordinate (B\i) at (\ang:2.5cm);
        
        % Draw the "Square" unit (represented here as a vertical bar with width)
        \draw[thick, fill=gray!10] (\ang-10:4.2cm) arc (\ang-10:\ang+10:4.2cm) -- 
                                  (\ang+10:2.3cm) arc (\ang+10:\ang-10:2.3cm) -- cycle;
        
        % Draw vertices
        \node[circle, fill=black, inner sep=1.5pt] at (T\i) {};
        \node[circle, fill=black, inner sep=1.5pt] at (B\i) {};
        
        % Draw internal connection (offset 0 or 6 within the unit)
        \draw[thick] (T\i) -- (B\i);
        
        % Label the square
        \node at (\ang:4.6cm) {\textbf{$Q_\i$}};
        
        % Draw Local Flip Arrow (Red)
        \draw[<->, red, thick, >=stealth] (\ang+15:3.25cm) arc (\ang+15:\ang-15:3.25cm);
        \node[red, font=\scriptsize] at (\ang+20:3.5cm) {$Z_2$};
    }
    
    % Draw Ring Connections (Offset 1)
    \foreach \i in {0,...,5} {
        \pgfmathsetmacro{\ang}{90 - \i*60}
        \pgfmathtruncatemacro{\nxt}{mod(\i+1,6)}
        
        % Outer rail connects to outer rail
        \draw[thick, blue] (\ang-10:4.1cm) arc (\ang-10:\ang-50:4.1cm);
        % Inner rail connects to inner rail
        \draw[thick, blue] (\ang-10:2.4cm) arc (\ang-10:\ang-50:2.4cm);
    }

    % Draw Global Symmetry Indicator (Center)
    \draw[->, blue, ultra thick] (-0.7,1.4) arc (140:30:0.8cm);
    \node[blue, align=center] at (0,0) {\textbf{Global Ring}\\ \textbf{Symmetry}\\ $D_6$ (Order 12)};

\end{tikzpicture}
\end{center}

% --- DISCONNECTED GRAPHS ---
\subsubsection{Disconnected Graphs (Parallel Universes)}

When $d = \gcd(S \cup \{12\}) > 1$, the graph splits into $d$ disjoint components. The total group order is the wreath product of the component symmetry group with the permutation group $S_d$.

\paragraph{\bf Class 6: Offset $S = \{0, 2, 4\}$}

\textbf{Group Order:} $1152$.

\begin{enumerate}
    \item \textbf{Decomposition:} The graph splits into $d=2$ components, each isomorphic to $L_6(0, 1, 2)$.
    \item \textbf{Component Symmetry ($\times 24$):} Each component is a connected, palindromic Levi graph of order 6. Its symmetry is $6 \times 2 \times 2 = 24$.
    \item \textbf{Permutation ($\times 2$):} The two components are identical and can be swapped.
\end{enumerate}

\textbf{Total Order:} $(24)^2 \times 2! = 576 \times 2 = \mathbf{1,152}$.

\paragraph{\bf Class 7: Offset $S = \{0, 2, 6\}$}

\textbf{Group Order:} $4608 = 2 \times 48^2$.

The graph is disconnected. Since the step size for indices is always even ($0, 2, 6$), the set of vertices partitions perfectly into two disjoint sets: those with even indices and those with odd indices.

\begin{enumerate}
    \item \textbf{Decomposition:}
    The graph decomposes into two isomorphic components:
    $$ L_{12}(0,2,6) \cong L_{even} \cup L_{odd} $$
    Each component is isomorphic to the Franklin Graph ($L_6(0,1,3)$).

    \item \textbf{Symmetry Calculation:}
    The total group order is the product of the symmetries of the two components and the swap operation between them:
    $$ |\mathrm{Aut}(G)| = |Aut(L_{even})| \times |Aut(L_{odd})| \times 2 = 48 \times 48 \times 2 = 4608 $$
\end{enumerate}

\paragraph{\bf Class 8: Offset $S = \{0, 3, 6\}$}

\textbf{Group Order:} $663552$.

\begin{enumerate}
    \item \textbf{Decomposition:} Splits into 3 components, each isomorphic to $L_4(0, 1, 2)$.
    \item \textbf{Component Identity:} $L_4(0, 1, 2)$ is the 3-Cube graph ($Q_3$).
    \item \textbf{Component Symmetry ($\times 48$):} The hypercubic group on 8 vertices has order 48.
\end{enumerate}

\textbf{Total Order:} $(48)^3 \times 3! = 110,592 \times 6 = \mathbf{663,552}$.

\paragraph{\bf Class 9: Offset $S = \{0, 4, 8\}$}

\textbf{Group Order:} $644972544$.
\begin{enumerate}
    \item \textbf{Decomposition:} Splits into 4 components, each isomorphic to $L_3(0, 1, 2)$.
    \item \textbf{Component Identity:} $L_3(0, 1, 2)$ is the complete bipartite graph $K_{3,3}$ (the Utility Graph).
    \item \textbf{Component Symmetry ($\times 72$):} The automorphism group of $K_{3,3}$ has order 72.
\end{enumerate}

\textbf{Total Order:} $(72)^4 \times 4! = \mathbf{644,972,544}$.

\section{Characterizations of the 12-TET Musical Systems}\label{sec9}
Table \ref{tab:12tet_systems_classification} summarizes the eight systems, their defining intervals, and their corresponding offset sets $S$. The entries in the last column of this table can be understood by examining the Levi graphs of the \textbf{eight} musical systems discussed in Sections 4--9. Alternatively, they are explained in terms of the explicit isomorphisms between the relevant graphs in the characterization theorems below the table.

\begin{table}[H]
\centering
\begin{tabular}{|l|c|c|c|l|}
\hline
\textbf{System Name} & \textbf{Thirds} $(t, s)$ & \textbf{Fifth} $q$ & \textbf{Offset Set} $S=\{0, t, n-s\}$ & \textbf{Canonical Class} \\
\hline
Wide Thirds & $(5, 2)$ & $7$ & $\{0, 5, 10\}$ & $L_{12}(0,1,2)$ \\
Far Narrow Thirds & $(10, 9)$ & $7$ & $\{0, 10, 3\}$ & $L_{12}(0,1,3)$ \\
Standard Thirds & $(4, 3)$ & $7$ & $\{0, 4, 9\}$ & $L_{12}(0,1,4)$ \\
Far Wide Thirds & $(11, 8)$ & $7$ & $\{0, 11, 4\}$ & $L_{12}(0,1,5)$ \\
Tritone System & $(6, 1)$ & $7$ & $\{0, 6, 11\}$ & $L_{12}(0,1,6)$ \\
Tritone-Wide System & $(8, 2)$ & $10$ & $\{0, 8, 10\}$ & $L_{12}(0,2,4)$ \\
Tritonian Fifth System & $(4, 2)$ & $6$ & $\{0, 4, 10\}$ & $L_{12}(0,2,6)$ \\
Equally Spaced System & $(6, 3)$ & $9$ & $\{0, 6, 9\}$ & $L_{12}(0,3,6)$ \\
\hline
\end{tabular}
\caption{Classification of 12-TET Harmonic Systems by Levi Graph Offsets}
\label{tab:12tet_systems_classification}
\end{table}

\subsection{Characterization Theorems}

\subsubsection{Class 1: System $q=7$, $(t,s)=(5,2)$}

\begin{theorem}
The "Wide Thirds" system $(5,2)$ forms a unique equivalence class in 12-TET. It is isomorphic to the Levi graph belonging to the equivalence class $L_{12}(0,1,2)$.
\end{theorem}

\begin{proof}
The offset set is $S_{(5,2)} = \{0, 5, 10\}$. The map $\phi(x) = 5x \pmod{12}$ transforms this set into the canonical offset:
\[ \{0, 5, 10\} \xrightarrow{\times 5} \{0, 25, 50\} \equiv \{0, 1, 2\}. \]
\end{proof}

\subsubsection{The Connected Cubic Classes 2, 3, and 4: Systems (10,9), (4,3), and (11,8)}

These three systems represent the standard connected cubic graphs of girth 6, but they belong to distinct topological classes.

\begin{theorem}
The "Far Narrow Thirds" $(10,9)$, "Standard Thirds" $(4,3)$, and "Far Wide Thirds" $(11,8)$ systems, all with $q=7$, are topologically distinct. They correspond to the Levi graph classes $L_{12}(0,1,3)$, $L_{12}(0,1,4)$, and $L_{12}(0,1,5)$ respectively.
\end{theorem}

\begin{proof}
We construct the affine maps to their canonical forms:
\begin{enumerate}
    \item \textbf{(10,9) to Class 3:} The offset set is $S_{(10,9)} = \{0, 10, 3\}$. The map $\phi(x) = 11x+1$ transforms:
    \[ \{0, 10, 3\} \xrightarrow{\times 11} \{0, 110, 33\} \equiv \{0, 2, 11\}\xrightarrow{+ 1} \{1,3,0\}= \{0,1,3\} . \]
    \item \textbf{(4,3) to Class 4:} The offset set is $S_{(4,3)} = \{0, 4, 9\}$. The map $\phi(x) = 5x+4$ transforms:
    \[ \{0, 4, 9\} \xrightarrow{\times 5} \{0, 20, 45\} \equiv \{0, 8, 9\} \xrightarrow{+4} \{4, 0, 1\} = \{0, 1, 4\}. \]
    \item \textbf{(11,8) to Class 5:} The offset set is $S_{(11,8)} = \{0, 11, 4\}$. The map $\phi(x) = 5x+5$ transforms:
    \[ \{0, 11, 4\} \xrightarrow{\times 5} \{0, 55, 20\} \equiv \{0, 7, 8\}\xrightarrow{+ 5}\{5,0,1\}=\{0,1,5\}. \]
\end{enumerate}
\end{proof}

\subsubsection{Class 5: System $q=7$, $(t,s)=(6,1)$}

\begin{theorem}
The "Tritone" system $(6,1)$ forms a unique equivalence class with symmetry group order 768. It is isomorphic to the Levi graph $L_{12}(0,1,6)$.
\end{theorem}

\begin{proof}
The offset set is $S_{(6,1)} = \{0, 6, 11\}$. The map $\phi(x) = 11x$ transforms:
\[ \{0, 6, 11\} \xrightarrow{\times 11} \{0, 66, 121\} \equiv \{0, 6, 1\} = \{0, 1, 6\}. \]
\end{proof}

\subsubsection{The Disconnected Classes 6, 7, and 8: Systems (8,2), (4,2), and (6,3)}

These three "exotic" systems are characterized by their disconnected topology, splitting into disjoint isomorphic components.

\begin{theorem}
The ``Tritone Wide'' System $q=10$, $(t,s)=(8,2)$, the ``Tritonian Fifth'' System $q=6$, $(t,s)=(4,2)$, and the ``Equally Spaced'' System $q=9$, $(t,s)=(6,3)$ correspond to the three topologically distinct classes of cyclic bipartite cubic Levi graphs in $\mathbb{Z}_{12}$. Their topology is the same as the topology of the respective Levi graph classes $L_{12}(0,2,4)$, $L_{12}(0,2,6)$, and $L_{12}(0,3,6)$.
\end{theorem}

\begin{proof}\phantom{}\\
\begin{enumerate}
    \item \textbf{(8,2) to Class 6:} The offset set is $S_{(8,2)} = \{0, 8, 10\}$. The map $\phi(x) = 5x$ transforms:
    \[ \{0, 8, 10\} \xrightarrow{\times 5} \{0, 40, 50\} \equiv \{0, 4, 2\} = \{0, 2, 4\}. \]
    \item \textbf{(4,2) to Class 7:} The offset set is $S_{(4,2)} = \{0, 4, 10\}$. The shift map $\phi(x) = x+2$ transforms:
    \[ \{0, 4, 10\} \xrightarrow{+2} \{2, 6, 0\} = \{0, 2, 6\}. \]
    \item \textbf{(6,3) to Class 8:} The offset set is $S_{(6,3)} = \{0, 6, 9\}$. The shift map $\phi(x) = x+6$ gives the required isomorphism:
    \[ \{0, 6, 9\} \xrightarrow{+6} \{6, 0, 3\} = \{0, 3, 6\}. \]
\end{enumerate}
\end{proof}

\subsection{Completeness of the Musical Classification}

The eight systems analyzed in Sections 4 through 9 were not chosen arbitrarily. They were selected to provide a specific "musical" representative for each admissible topological class of cyclic bipartite cubic Levi graphs in $\mathbb{Z}_{12}$.

As shown in the classification (Table \ref{tab:12tet_systems_classification}), our musical systems cover Classes 1 through 8. However, the mathematical classification of $L_{12}(S)$ graphs identifies a ninth class, represented by the canonical offset $\{0, 4, 8\}$. We now demonstrate why this class cannot support a harmonic system defined by our axioms, thereby confirming that our analysis of musical geometries in 12-TET is exhaustive.

\subsubsection{The Holographic Theorem for Musical Systems}

A fundamental property of our harmonic definition is that every system comes in conjugate pairs. If a system is defined by the partition $t+s=q$, the physical set of intervals $\{t, s\}$ is inherent to the system regardless of which interval is assigned to the "Major" or "Minor" role.

\begin{theorem}[The Holographic Principle]
Let a harmonic system in $n$-TET be defined by the generator $q$ and the partition $t+s=q$. The four offset sets associated with this system:
\[ S_1 = \{0, t, n-s\}, \quad S_2 = \{0, s, n-t\}, \]
\[ S_3 = \{0, t, q\}, \quad S_4 = \{0, s, q\} \]
are all affinely equivalent. Consequently, they all generate isomorphic Levi graphs.
\end{theorem}

\begin{proof}
Let $S_1 = \{0, t, n-s\}$.
\begin{enumerate}
    \item \textbf{Equivalence of $S_1$ and $S_2$:} The map $\phi(x) = -x$ (reflection) transforms $S_1$ to $S_2$:
      \[S_1= \{0, t, -s\} \xrightarrow{\times( -1)} \{0, -t, s\} \equiv \{0, s,n-t\} = S_2. \]
      Thus $S_1$ and $S_2$ are affinely equivalent.
    \item \textbf{Equivalence of $S_3$ and $S_2$, and $S_4$ and $S_1$:} The shift $\phi(x)=x-t$ affinely transforms $S_3$ to $S_2$:
\[S_3= \{0, t, q\}= \{0, t, t+s\} \xrightarrow{ -t} \{-t,0, s\} \equiv \{0, s, n-t\} = S_2. \]
Likewise, the shift $\phi(x)=x-s$ affinely transforms $S_4$ to $S_1$:
\[S_4= \{0, s, q\}= \{0, s, t+s\} \xrightarrow{ -s} \{-s,0, t\} \equiv \{0, t, n-s\} = S_1. \]
\end{enumerate}
Thus we have established affine equivalences $S_4\equiv S_1\equiv S_2\equiv S_3$, which completes the proof.
\end{proof}

\subsubsection{Exclusion of the Non-Musical Class 9}

A necessary condition for a Levi graph $L_{12}(S)$ to represent a valid musical system is that its equivalence class must contain at least two distinct normalized offset sets corresponding to the conjugate pair $\{0, t, n-s\}$ and $\{0, s, n-t\}$.

\begin{theorem}
The equivalence class represented by the canonical offset $S_{can} = \{0, 4, 8\}$ contains only a single normalized element. Therefore, it cannot represent a musical system generated by a partition $t+s=q$ with $t \neq s$.
\end{theorem}

\begin{proof}
The set $S = \{0, 4, 8\}$ in $\mathbb{Z}_{12}$ consists of the elements of the subgroup generated by 4.
The normalized elements of an equivalence class are found by applying all affine transformations $\phi(x) = ax+b$ where $a \in \mathbb{Z}_{12}^*$ and $b \in \mathbb{Z}_{12}$, then sorting and shifting to start at 0.
\begin{itemize}
    \item Any translation $x+b$ preserves the relative differences $\{4, 4, 4\}$.
    \item Any multiplication by a unit $a \in \{1, 5, 7, 11\}$ preserves the set $\{0, 4, 8\}$ because:
    \begin{itemize}
        \item $1 \times \{0,4,8\} = \{0,4,8\}$
        \item $5 \times \{0,4,8\} = \{0, 20, 40\} \equiv \{0, 8, 4\}$
        \item $7 \times \{0,4,8\} = \{0, 28, 56\} \equiv \{0, 4, 8\}$
        \item $11 \times \{0,4,8\} = \{0, 44, 88\} \equiv \{0, 8, 4\}$
    \end{itemize}
\end{itemize}
Since the set is invariant (up to permutation) under the entire affine group, it is the unique normalized representative of its class. A musical system requires distinct $t$ and $s$ (or at least the structural capacity for $t$ and $s$ to define a Fifth $q$). The set $\{0, 4, 8\}$ implies $t=4$ and $n-s=8 \implies s=4=t$, which contradicts $t\neq s$ for a valid musical system. Thus, Class 9 is mathematically possible but musically empty.
\end{proof}

This confirms that our eight studied systems exhaust all possible harmonic geometries in 12-TET.

\subsection{Identification with Daublebsky von Sterneck Configurations}

Among the class of cyclic cubic bipartite graphs---specifically the Levi graphs $L_{12}(S)$---there exists a distinguished subset corresponding to geometric configurations. We recall that a symmetric configuration $12_3$ is an incidence structure consisting of 12 points and 12 lines, where every point lies on exactly 3 lines and every line contains exactly 3 points. A necessary requirement for such a structure is that its associated Levi graph must have a girth $g \ge 6$. This condition ensures that the incidence structure is triangle-free in the graph-theoretic sense (no 4-cycles), meaning that any two distinct points share at most one line.

Our three connected musical systems---Standard Thirds $(4,3)$, Far Narrow Thirds $(10,9)$, and Far Wide Thirds $(11,8)$---satisfy this requirement and thus constitute valid $12_3$ configurations. To characterize them, we refer to the historical and modern census of these structures.

The classification of all connected symmetric configurations $12_3$ by Daublebsky von Sterneck \cite{DvS1,DvS2}, redone and corrected recently by Al-Azemi and Betten \cite{AlAzemi2014}, lists 229 distinct isomorphism classes, denoted $D_1 \dots D_{229}$. According to Adam's Conjecture (verified for $n \le 22$) \cite{adam1967research}, cyclic configurations are isomorphic if and only if their generating sets are affinely equivalent. Consequently, our three systems correspond to three distinct classes of cyclic $12_3$ configurations. 

The identification is performed using a two-step algorithmic sieve: first, filtering by the Automorphism Group Order (Ago), which differs from the order of the authomorphism group $\mathrm{Aut}(G)$ of its Levi graph by a factor of two: $2\mathrm{Ago}=|\mathrm{Aut}(G)|$; and second, filtering for the \textit{cyclic} property, as our musical systems are generated by the rotation of an interval set modulo 12.

\begin{enumerate}
    \item \textbf{System (11,8) $\cong$ Configuration $D_{226}$} \\
    \textbf{Generator:} $\{0, 1, 5\}$ (Offsets). \\
    \textbf{Symmetry:} Ago 24 (Levi Graph Order 48). \\
    \textbf{Identification:} This corresponds to configuration $D_{226}$. In the Al-Azemi census, $D_{226}$ is the unique cyclic configuration with Ago 24. It represents the maximal symmetry among our systems, possessing the additional multiplicative automorphism $x \mapsto 5x$.

    \item \textbf{System (4,3) $\cong$ Configuration $D_{222}$} \\
    \textbf{Generator:} $\{0, 1, 4\}$ (Offsets). \\
    \textbf{Symmetry:} Ago 12 (Levi Graph Order 24). \\
    \textbf{Identification:} The standard diatonic system corresponds to $D_{222}$. Among the cyclic configurations with Ago 12, $D_{222}$ is the class associated with the starter set $\{0, 1, 4\}$. This confirms the identification previously noted by Boland and Hughston \cite{lane}.

    \item \textbf{System (10,9) $\cong$ Configuration $D_{228}$} \\
    \textbf{Generator:} $\{0, 1, 3\}$ (Offsets). \\
    \textbf{Symmetry:} Ago 12 (Levi Graph Order 24). \\
    \textbf{Identification:} This system corresponds to configuration $D_{228}$. Although $D_{202}$ also shares the same group order (Ago 12), $D_{202}$ is not cyclic. The only other cyclic configuration with Ago 12 in the classification table in \cite{AlAzemi2014} is $D_{228}$, which is generated by the starter $\{0, 1, 3\}$. Thus, the Far Narrow system is identified as $D_{228}$.
\end{enumerate}

\begin{theorem}[Isomorphism to Daublebsky von Sterneck Configurations]
The connected musical systems, ``Standard Thirds'' $(4,3)$, ``Far Wide Thirds'' $(11,8)$ and ``Far Narrow Thirds'' $(10,9)$,  of 12-TET are isomorphic to the distinct cyclic $12_3$ configurations defined in the Al-Azemi \& Betten census as follows:

\begin{center}
\begin{tabular}{|l|c|c|c|c|}
\hline
\emph{\textbf{Musical System}} & \emph{\textbf{Generating Set}} & \emph{\textbf{Symmetry (Levi)}} & \emph{\textbf{Ago}} & $\mathbf{D_i}$ \emph{\textbf{Class}} \\
\hline
\emph{Standard (4,3)} & $\{0, 1, 4\}$ & \emph{24} & \emph{12} & $\mathbf{D_{222}}$ \\
\emph{Far Wide (11,8)} & $\{0, 1, 5\}$ & \emph{48} & \emph{24} & $\mathbf{D_{226}}$ \\
\emph{Far Narrow (10,9)} & $\{0, 1, 3\}$ & \emph{24} & \emph{12} & $\mathbf{D_{228}}$ \\
\hline
\end{tabular}
\end{center}
\vspace{0.2cm}
%\small{\textit{Note: While configuration $D_{202}$ shares the symmetry order of system (10,9), it is excluded because it is not a cyclic configuration.}}
\end{theorem}

%% \section{General Classification of 12-TET Systems}
%% \label{sec:classification_theorem}

%% We are now in a position to generalize our findings into a complete classification of all possible harmonic systems within the 12-tone equal temperament.

\section{General Classification of 12-TET Pythagorean Systems}
\label{sec:classification_theorem}

We are now in a position to generalize our findings into a complete classification of all possible harmonic systems within the 12-tone equal temperament.

Our analysis has shown that every musical system is uniquely determined by the choice of the generator interval (the "Fifth") $q$ and the partitioning of this interval into a Major Third $t$ and a Minor Third $s$. This partitioning satisfies the condition $t+s \equiv q \pmod{12}$. Each such partition defines a specific harmonic flavor characterized by the difference $\Delta = t-s$.

From a topological perspective, each system $(t,s,q)$ generates a Tonnetz defined by the offset set $\{0, t, 12-s\}$. As established in Section 10, there are exactly eight non-isomorphic affine classes of cyclic $12_3$ configurations. We can therefore systematically scan all possible generators $q \in \{1, \dots, 11\}$ and all possible integer partitions $t+s \equiv q$ ($b \neq c$) to assign every conceivable 12-TET triad to its fundamental geometric class.

\begin{theorem}[The Classification of 12-TET Harmonies]
Every tuning system in 12-TET defined by a generator $q$ and thirds $(t,s)$ with $t \neq s$ is isomorphic to one of the eight canonical Levi graph structures:
\begin{enumerate}
    \item \textbf{Wide Thirds} $L_{12}(0,1,2)$ (Signature $\{1,1,10\}$)
    \item \textbf{Far Narrow} $L_{12}(0,1,3)$ (Signature $\{1,2,9\} \cong \{2,3,7\}$)
    \item \textbf{Standard} $L_{12}(0,1,4)$ (Signature $\{1,3,8\} \cong \{3,4,5\}$)
    \item \textbf{Far Wide} $L_{12}(0,1,5)$ (Signature $\{1,4,7\}$)
    \item \textbf{Tritone} $L_{12}(0,1,6)$ (Signature $\{1,5,6\}$)
    \item \textbf{Tritone-Wide} $L_{12}(0,2,4)$ (Signature $\{2,2,8\}$)
    \item \textbf{Tritonian Fifth} $L_{12}(0,2,6)$ (Signature $\{2,4,6\}$)
    \item \textbf{Equally Spaced} $L_{12}(0,3,6)$ (Signature $\{3,3,6\}$)
\end{enumerate}
\end{theorem}

Table \ref{tab:full_classification_12} presents the complete atlas of these 54 systems. Each cell contains the tuple $(t, s; \Delta)$ representing the musical system's thirds and their difference. Note that while most systems satisfy $t > s$, four specific symmetric geometries (appearing in rows $q=5, 9, 10, 11$) require $t < s$ to be generated; these are included to complete the set of 54 valid offset configurations.

\begin{table}[h!]
\centering
\scriptsize
\renewcommand{\arraystretch}{1.5}
\setlength{\tabcolsep}{2pt}
\begin{tabular}{|c||c|c|c|c|c|c|c|c|}
\hline
\textbf{Fifth} & \textbf{Wide} & \textbf{Far Narrow} & \textbf{Standard} & \textbf{Far Wide} & \textbf{Tritone} & \textbf{Tr.-Wide} & \textbf{Tr.-Fifth} & \textbf{Equal} \\
$q$ & $L_{12}\{0,1,2\}$ & $L_{12}\{0,1,3\}$ & $L_{12}\{0,1,4\}$ & $L_{12}\{0,1,5\}$ & $L_{12}\{0,1,6\}$ & $L_{12}\{0,2,4\}$ & $L_{12}\{0,2,6\}$ & $L_{12}\{0,3,6\}$ \\ \hline\hline
1 & (11,2; 9) & (10,3; 7) & (9,4; 5) & (8,5; 3) & (7,6; 1) & -- & -- & -- \\ \hline
2 & -- & (11,3; 8) & -- & -- & -- & (10,4; 6) & (8,6; 2) & -- \\
  &    & (9,5; 4)  &    &    &    &            &           &    \\ \hline
3 & -- & (10,5; 5) & (11,4; 7) & -- & -- & -- & -- & (9,6; 3) \\
  &    & (2,1; 1)   & (8,7; 1)  &    &    &    &    &          \\ \hline
4 & -- & -- & (9,7; 2) & (11,5; 6) & -- & -- & (10,6; 4) & -- \\
  &    &    & (3,1; 2) &           &    &    &           &    \\ \hline
5 & (10,7; 3) & (3,2; 1) & (9,8; 1) & (4,1; 3) & (11,6; 5) & -- & -- & -- \\ \hline
6 & -- & -- & -- & -- & (5,1; 4)  & --  & {\bf (4,2; 2)} & -- \\
  &    &    &    &    & (11,7; 4) &           & (10,8; 2) &    \\ \hline
7 & {\bf (5,2; 3)} & {\bf (10,9; 1)} & {\bf (4,3; 1)} & {\bf (11,8; 3)} & {\bf (6,1; 5)} & -- & -- & -- \\ \hline
8 & -- & -- & (11,9; 2) & (7,1; 6) & -- & -- & (6,2; 4) & -- \\
  &    &           & (5,3; 2)  &           &    &    &          &    \\ \hline
9 & -- & (7,2; 5) & (8,1; 7) & -- & -- & -- & -- & {\bf (6,3; 3)} \\
  &    & (11,10; 1)  & (5,4; 1) &    &    &    &    &          \\ \hline
10 & -- & (9,1; 8)  & -- & -- & -- & {\bf (8,2; 6)} & (6,4; 2) & -- \\
   &    & (7,3; 4) &          &    &    &          &          &    \\ \hline
11 & (10,1; 9) & (9,2; 7) & (8,3; 5) & (7,4; 3) & (6,5; 1) & -- & -- & -- \\ \hline
\end{tabular}
\caption{Classification of 12-TET Systems based on offset pair analysis. Values are $(t, s; \Delta)$ where $t+s=q$ and $t > s$. Bold entries indicate the specific systems analyzed in Sections 4--9.}\label{tab:full_classification_12}
\end{table}

This classification reveals the scarcity of "structurally unique" harmonic worlds. Despite the combinatorial possibilities of choosing thirds and fifths in 12-TET, any composer is ultimately navigating one of these eight topological spaces.

\subsection{Mathematics versus Music}

We conclude this study by drawing a necessary line between the objective reality of the graph and the subjective reality of the ear. From a strictly mathematical point of view, our classification theorem implies a collapse of complexity: there are not dozens of musical systems in 12-TET, but only the eight topological classes analyzed in detail in Sections 4--9.

A closer inspection of Table \ref{tab:full_classification_12} reveals a striking topological dichotomy. The distribution of musical systems among these eight classes is far from uniform. Two specific topologies—the "Standard" class $L_{12}\{0,1,4\}$ and the "Far Narrow" class $L_{12}\{0,1,3\}$—are overwhelmingly dominant. Each of these two classes contains exactly \textbf{twelve} distinct musical representatives. Together, they account for nearly half (24 out of 50) of all possible harmonic systems in 12-TET. This suggests that the 12-tone universe is naturally polarized between two fundamental geometries: one favoring tertian structures (represented by Western harmony) and another favoring chromatic or cluster-like structures (represented by the Far Narrow systems).

Let us focus specifically on the topological class $L_{12}\{0,1,4\}$. As noted, this single geometric structure hosts 12 distinct musical systems distributed across various generators $q$. Only one of them—the partition $(t,s)=(4,3)$ of the classical fifth $q=7$—corresponds to the familiar harmony of the Western world. The other 11 systems are its mathematical clones.

To a mathematician relying solely on the topological tools used so far, these 12 systems are indistinguishable. Whether one analyzes the "Standard" system $(4,3)$ or the "Alien" system $(8,1)$ (appearing in the $q=9$ row), they are the same object. They share the exact same set of invariants, the same graph spectrum, and the same Levi graph structure. Meaning, there exists an affine isomorphism that maps the chords and Tonnetz of one perfectly onto the other. The structural "skeleton"—the cycles, the connectivity, the "wormholes"—is identical.

However, isomorphism is not identity in the realm of perception. While the topologist sees equivalence, the musician hears a chasm of difference. The standard system $(4,3)$ approximates the acoustic consonances derived from the harmonic series. In contrast, its isomorphic twin $(8,1)$ utilizes a Major third of 8 steps (a minor sixth) and a Minor third of 1 step (a semitone). Consequently, these "shadow" systems represent parallel harmonic universes where the grammar is identical to that of Bach or Mozart, but the vocabulary is completely alien. A chord progression that sounds "natural" and smooth in the standard system would sound jarring in the $(8,1)$ system, despite preserving the exact same voice-leading logic.

This creates a new frontier for composition. The explicit isomorphisms $\phi(x) = ax+b$ derived in Section 10 are not merely abstract proofs; they are translation tools. It is now possible to compose a piece in the "standard" system $(4,3)$, and then strictly transform it via isomorphism into a mathematically equivalent but sonically distinct system like $(9,8)$ or $(8,3)$. The harmony would remain syntactically correct—every resolution and common-tone connection would be preserved—but the semantic "color" would shift radically. The graph provides the map; it is the choice of the musician to decide which of these landscapes is worth inhabiting.

\subsection{Future Outlook: The Search for Canonical Harmony}

Is the story truly over? Does mathematics have no preference among these twelve topological twins? 

If we restrict ourselves to graph theory, the answer is yes. But if we invoke number theory, a hidden hierarchy emerges. Among the 12 representatives of the standard class, only four possess a generator $q$ that satisfies the condition $\gcd(q, 12)=1$. These are:
\begin{itemize}
    \item The \textbf{Standard} system $(4,3; 1)$ with $q=7$;
    \item The system $(9,4; 5)$ with $q=1$ (the semitone generator);
    \item The system $(9,8; 1)$ with $q=5$ (the fourth generator);
    \item The system $(8,3; 5)$ with $q=11$ (the major seventh generator).
\end{itemize}
These four systems form a "privileged quartet." Unlike the other eight shadows—which utilize generators like $q=2$ or $q=4$ that fracture the chromatic scale into disjoint loops—these four systems preserve the global connectivity of the "Circle of Fifths." 

We can go even deeper. The number 12 possesses a composite structure that allows for a decomposition via the Chinese Remainder Theorem. Since $12 = 3 \times 4$ and the factors 3 and 4 are coprime, the cyclic group $\mathbb{Z}_{12}$ is isomorphic to the direct product $\mathbb{Z}_3 \times \mathbb{Z}_4$. 

This isomorphism is not merely an abstract equivalence; it provides a specific recipe for mapping every musical note $x \in \{0, \dots, 11\}$ to a unique coordinate pair $(u, v)$ on a grid. The rule is simple: the first coordinate is the remainder of $x$ modulo 3, and the second is the remainder of $x$ modulo 4:
$$
x \longmapsto (x \bmod 3, \;\; x \bmod 4).
$$
Let us calculate the position of a few key intervals to see how this geometry constructs itself.
\begin{itemize}
    \item The root note 0 (C) maps to $(0 \bmod 3, 0 \bmod 4) = (0,0)$. This is the origin.
    \item The semitone {\color{mygold}1} ({\color{mygold}C$\sharp$}) maps to ${\color{mygold}(1,1)}$, forming the diagonal generator of the grid.
    \item The Major Third {\color{blue}4} ({\color{blue}E}) maps to $(4 \bmod 3, 4 \bmod 4) = {\color{blue}(1, 0)}$. This vector moves purely along the horizontal $\mathbb{Z}_3$ axis.
    \item The Major Sixth {\color{red}9} ({\color{red}A}) maps to $(9 \bmod 3, 9 \bmod 4) = {\color{red}(0, 1)}$. This vector moves purely along the vertical $\mathbb{Z}_4$ axis.
\end{itemize}
Mapping the entire chromatic scale onto this product space reveals the hidden geometric grid visualized in Figure \ref{fig:crt_grid}.

\begin{figure}[H]
    \centering
    \definecolor{mygold}{rgb}{0.8, 0.6, 0.0}
    \begin{tikzpicture}[scale=1.0]
        % --- LEFT PLOT: Numerical ---
        \begin{scope}
            % Standard Axis arrows
            \draw[->, thick] (-0.5,0) -- (2.6,0) node[right] {$\mathbb{Z}_3$};
            \draw[->, thick] (0,-0.5) -- (0,3.5) node[above] {$\mathbb{Z}_4$};
            
            % Highlighted Axis Segments
            \draw[-, ultra thick, blue] (0,0) -- (1,0);
            \draw[-, ultra thick, red] (0,0) -- (0,1);

            % Row y=0 (0 mod 4)
            \fill (0,0) circle (2pt) node[above right] {0};
            \fill (1,0) circle (2pt) node[above right] {{\color{blue}4}};
            \fill (2,0) circle (2pt) node[above right] {8};
            
            % Row y=1 (1 mod 4)
            \fill (0,1) circle (2pt) node[above right] {{\color{red}9}};
            \fill (1,1) circle (2pt) node[above right] {{\color{mygold}1}};
            \fill (2,1) circle (2pt) node[above right] {5};
            
            % Row y=2 (2 mod 4)
            \fill (0,2) circle (2pt) node[above right] {6};
            \fill (1,2) circle (2pt) node[above right] {10};
            \fill (2,2) circle (2pt) node[above right] {2};
            
            % Row y=3 (3 mod 4)
            \fill (0,3) circle (2pt) node[above right] {3};
            \fill (1,3) circle (2pt) node[above right] {7};
            \fill (2,3) circle (2pt) node[above right] {11};
            
            \node at (1, -1) {\textbf{(a) Numerical Labels}};
        \end{scope}

        % --- RIGHT PLOT: Musical ---
        \begin{scope}[xshift=6cm]
            % Standard Axis arrows
            \draw[->, thick] (-0.5,0) -- (2.8,0) node[right] {$\mathbb{Z}_3$};
            \draw[->, thick] (0,-0.5) -- (0,3.5) node[above] {$\mathbb{Z}_4$};
            
            % Highlighted Axis Segments
            \draw[-, ultra thick, blue] (0,0) -- (1,0);
            \draw[-, ultra thick, red] (0,0) -- (0,1);
            
            % Row y=0
            \fill (0,0) circle (2pt) node[above right] {C};
            \fill (1,0) circle (2pt) node[above right] {{\color{blue}E}};
            \fill (2,0) circle (2pt) node[above right] {G$\sharp$};
            
            % Row y=1
            \fill (0,1) circle (2pt) node[above right] {{\color{red}A}};
            \fill (1,1) circle (2pt) node[above right] {{\color{mygold}C$\sharp$}};
            \fill (2,1) circle (2pt) node[above right] {F};
            
            % Row y=2
            \fill (0,2) circle (2pt) node[above right] {F$\sharp$};
            \fill (1,2) circle (2pt) node[above right] {A$\sharp$};
            \fill (2,2) circle (2pt) node[above right] {D};
            
            % Row y=3
            \fill (0,3) circle (2pt) node[above right] {D$\sharp$};
            \fill (1,3) circle (2pt) node[above right] {G};
            \fill (2,3) circle (2pt) node[above right] {B};
            
            \node at (1, -1) {\textbf{(b) Musical Note Labels}};
        \end{scope}
    \end{tikzpicture}
    \caption{The decomposition of the 12-tone universe onto the $\mathbb{Z}_3 \times \mathbb{Z}_4$ grid via the Chinese Remainder Theorem. The segments corresponding to the basis vectors are highlighted: the blue segment represents the interval {\color{blue}4} (Major Third), and the red segment represents the interval {\color{red}9} (Major Sixth). The ``Unison'' is at {\color{mygold}(1,1)}.}
    \label{fig:crt_grid}
\end{figure}
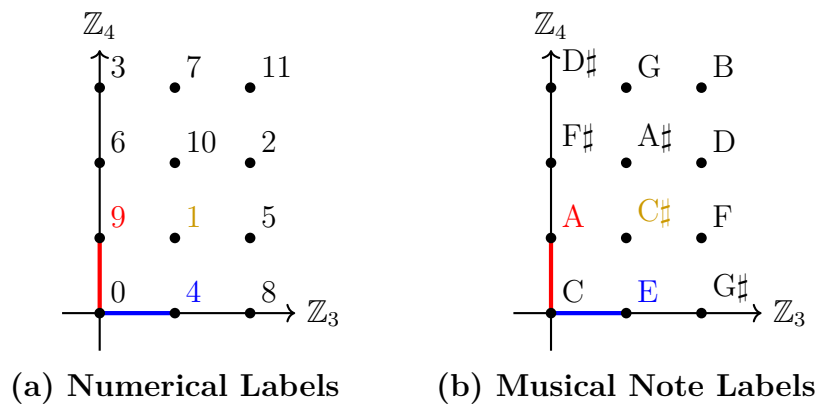

The points corresponding to the integers $4$ and $9$ play a distinguished role in this geometry. In the product coordinates, $4$ corresponds to the basis vector $(1,0)$ and $9$ corresponds to the basis vector $(0,1)$.

These two intervals are the \textbf{minimal generators} of the grid along the $\mathbb{Z}_3$ and $\mathbb{Z}_4$ axes. This observation forces us to revisit our "privileged quartet" with fresh eyes. We recall that one of its members is the system defined by the partition $(t,s) = (9,4)$ with the generator $q=1$. 

In the light of the Chinese Remainder Theorem, this specific system appears to be the most mathematically natural of all. Its "Major Third" is the basis vector $(0,1)$ (the note A relative to C), its "Minor Third" is the basis vector $(1,0)$ (the note E), and its "Fifth" (the generator $q=1$) is the diagonal unit $(1,1)$ (the note C$\sharp$).

Compare this to the standard Western harmony $(4,3)$. While $(4,3)$ is acoustically privileged by the physics of vibrating strings, structurally it is more complex: the interval $4$ is the basis vector $(1,0)$, but the interval $3$ is $(0,3)$—a vector three times longer than the unit step in the $\mathbb{Z}_4$ direction.

This suggests that the "alien" system $(9,4)$—where the "Major Third" is a major sixth and the "Minor Third" is a major third—is not merely a distortion of our familiar harmony. It is, perhaps, the canonical form of harmony in the algebraic universe of $\mathbb{Z}_{12}$, from which our own Western tradition is but a specific, acoustically biased derivation. We leave the exploration of this "Canonical CRT Harmony" for future research.

\subsection{Examples of the $(4,3)$ to $(9,4)$ Transform}

Having established the topological classification of the harmonic universes in 12-TET, it is natural to ask how these non-standard mathematical structures manifest acoustically. To bridge the gap between algebraic theory and musical experience, we explored the auditory geometry of the ``alien'' $(9,4)$ system by translating existing classical compositions from the standard Western $(4,3)$ system.

A remarkable property of the passage from the standard $(4,3)$ system to the $(9,4)$ system is that the entire translation can be governed by a single affine transformation on the pitch classes $x \in \mathbb{Z}_{12}$:
\begin{equation}
    x \mapsto 5x + 1 \pmod{12}.
\end{equation}
Because $\gcd(5,12)=1$, this transformation is a bijection, ensuring that no melodic or harmonic information is destroyed (no two distinct pitch classes are mapped to the same value). Most strikingly, this map acts universally across the harmonic hierarchy. In the transition from $(4,3)$ to $(9,4)$, the offsets of the individual notes, the roots and internal intervals of the minor and major chords, and even the coordinates of the underlying Tonnetz are all consistently transformed by this exact same mapping $x \mapsto 5x + 1 \pmod{12}$. 

To demonstrate the musical viability of this mathematical symmetry, we developed a computational script to apply this affine transformation to the MIDI data of classical repertoire. The algorithm processes each note event, applying the $(5x+1)\bmod 12$ map to its pitch class while preserving its original octave register and temporal rhythm. The transformed MIDI sequences were subsequently rendered into high-fidelity audio using the physical modeling synthesizer Pianoteq 9 PRO, utilizing its advanced acoustic piano models to provide a neutral, deeply resonant acoustic space for the new harmony.

The results of this transformation produce a profoundly different harmonic landscape that nonetheless retains the rigid logical voice-leading of the original compositions. We invite the reader to listen to the following rendered examples of this $(4,3) \to (9,4)$ transformation:

\begin{itemize}
    \item \textbf{George Frideric Händel -- Passacaglia in G minor (HWV 432):}\\
    \href{https://www.fuw.edu.pl/~nurowski/passacaglia_transformed.wav}{Listen to the $(9,4)$ transformed version (.wav)}
    
    \item \textbf{Franz Schubert -- Piano Trio No. 2 in E-flat major, Op. 100 (Parts 1 \& 2):}\\
    \href{https://www.fuw.edu.pl/~nurowski/schubert_trio_transformed.wav}{Listen to the $(9,4)$ transformed version (.wav)}
    
    \item \textbf{Franz Schubert -- Piano Trio No. 2 in E-flat major, Op. 100 (Parts 3 \& 4):}\\
    \href{https://www.fuw.edu.pl/~nurowski/schubert_trio_transformed_parts3_4.wav}{Listen to the $(9,4)$ transformed version (.wav)}
\end{itemize}

These audio examples provide a direct sensory verification of our algebraic classification. They demonstrate that the mathematical elegance of the $(9,4)$ system---predicted theoretically via the Chinese Remainder Theorem and Levi graphs---translates into a coherent, albeit completely alien, auditory universe.

\end{document}